\newcommand{\bea}{\begin{eqnarray}}
\newcommand{\eea}{\end{eqnarray}}
\def\beaa{\begin{eqnarray*}}
\def\eeaa{\end{eqnarray*}}
\def\ba{\begin{array}}
\def\ea{\end{array}}
\def\be#1{\begin{equation} \label{#1}}
\def \eeq{\end{equation}}
\newcommand{\nn}{\nonumber}
\def\be{{\beta}}
\def\e{\varepsilon}
\def\s{\sigma}
\def\les{\lesssim}
\def\Z{{\mathbb{Z}}}
\def\R{{\mathbb{R}}}
\def\F{{\mathcal{F}}}
\def\N{{\mathcal{N}}}
\def\p{{\prime}}
\def\what{\widehat}
\theoremstyle{plain}
\newtheorem{theorem}{Theorem}[section]
\newtheorem{lem}[theorem]{Lemma}
\newtheorem{proposition}[theorem]{Proposition}
\numberwithin{equation}{section}
\begin{document}

\title[Modified scattering for the Boson Star Equation]{Modified scattering for the Boson Star Equation}

\author[Fabio Pusateri]{Fabio Pusateri}
\address{Princeton University}
\email{fabiop@math.princeton.edu}

\thanks{The author was supported in part by a Simons Postdoctoral Fellowship.}

\begin{abstract} 
We consider the question of scattering for the boson star equation in three space dimensions.
This is a semi-relativistic Klein-Gordon equation with a cubic nonlinearity of Hartree type.
We combine weighted estimates, obtained by exploiting a special null structure present in the equation,
and a refined asymptotic analysis performed in Fourier space,
to obtain global solutions evolving from small and localized Cauchy data.
We describe the behavior at infinity of such solutions by identifying a suitable nonlinear asymptotic correction to scattering.
As a byproduct of the weighted energy estimates alone, 
we also obtain global existence and (linear) scattering for solutions of semi-relativistic Hartree equations
with potentials decaying faster than Coulomb.
\end{abstract}

\maketitle
\setcounter{tocdepth}{1}
\tableofcontents

\section{Introduction}

\vskip10pt
\subsection{The Equation}

We consider the semi-relativistic Klein-Gordon equation with a cubic Hartree-type nonlinearity
\begin{align}
\label{eq}
i \partial_t u  -  \sqrt{m^2-\Delta} u =  \lambda \left( {|x|}^{-1} \ast {|u|}^2 \right) u \, ,
\end{align}
with $u: (t,x) \in \R \times \R^3 \rightarrow \mathbb{C}$, and $m,\lambda \in \R$.
The operator $\sqrt{m^2-\Delta}$ is defined as usual by its symbol $\sqrt{m^2+|\xi|^2}$ in Fourier space,
and $\ast$ denotes the convolution on $\R^3$.
In theoretical astrophysics, \eqref{eq} is used to describe the dynamics of boson stars (Chandrasekhar theory),
and it is often referred to as the {\it boson star} equation.
In \cite{ES}, Elgart and Schlein rigorously derived \eqref{eq}
via the mean field theory for quantum many-body systems of boson particles 
with Coulomb type (gravitational) interaction.
In the past few years the semi-relativistic equation \eqref{eq} 
and has been analyzed by several authors with regards to various aspects of the PDE theory.
We will discuss some of the most relevant works on \eqref{eq},
and on some of its generalizations, in section \ref{background} below.
In this paper we are interested in the asymptotic behavior as $t \rightarrow \infty$ of small solutions 
of the Cauchy problem associated to \eqref{eq}, and, in particular, in the question of scattering.
Our main result is the following: 
{\em For any given $u_0(x) = u(t=0,x)$ which is small enough in a suitable weighted Sobolev space,
there exists a unique global solution of \eqref{eq} which decays pointwise over time like a solution of the linear equation,
but, as time goes to infinity, scatters in a nonlinear fashion}.
This phenomenon of nonlinear (modified) scattering happens similarly for the standard Hartree equation \cite{HN,KP}
$$i \partial_t u  -  \Delta u =  \left( {|x|}^{-1} \ast {|u|}^2 \right) u \quad , \quad x \in \R^n \, , \, n\geq 2 \, ,$$
and, in its essence, it is the same type of asymptotic behavior that can be found 
in several others dispersive equations which are scattering-critical (or $L^\infty$-critical) .
An additional result contained in the present paper concerns some generalizations of \eqref{eq} 
with potentials decaying faster than the Coulomb potential ${|x|}^{-1}$.
We will prove (regular) scattering for those models, closing some gaps in the existing literature.

\subsection{Background and known results}\label{background}
As pointed out above, the semi-relativistic equation \eqref{eq} can be
rigorously derived as the mean field limit of an $N$-body system of interacting boson particles.
In the time independent case, the question of convergence and existence of solutions for the limiting equation 
had been studied earlier by Lieb and Yau \cite{LY}.
More recent investigations on the relation between the $N$-particle system 
and the limiting nonlinear equation \eqref{eq}, can be found in \cite{MS}.

The conserved energy associated to \eqref{eq} is
\begin{align}
 \label{energy}
E(u) := \frac{1}{2} \int_{\R^3} \bar{u} \sqrt{m^2-\Delta} u \, dx 
  +  \frac{\lambda}{4}  \int_{\R^3} \left( {|x|}^{-1} \ast {|u|}^2 \right) {|u|}^2 \, dx \, ,
\end{align}
and therefore the energy space  is $H^{1/2}$.
Solutions of \eqref{eq} also enjoy conservation of mass, ${\| u(t) \|}_{L^2} = {\| u(0) \|}_{L^2}$,
and the nonlinearity $( {|x|}^{-1} \ast {|u|}^2 ) u$ is critical with respect to $L^2$ in three dimensions.

Local existence of solutions for the Cauchy problem with data in $H^s(\R^3)$, $s \geq 1/2$,
was proved by Lenzmann in \cite{L1}, also for more general models than \eqref{eq}, including a wide class of external potentials.
In the cited paper, using conservation of energy, global existence is obtained for any data in the defocusing case $\lambda \geq 0$.
In the focusing case $\lambda < 0$, one needs instead to restrict the size of the $L^2$-norm of the initial data 
to be smaller than that of the ground state \cite{FraLen1}. 
It was shown by by Fr\"{o}lich and Lenzmann \cite{FL2}
that, in the focusing case, any radially symmetric smooth compactly supported initial data with negative energy leads to finite time blow-up.
%
%
Sharp low regularity wellposedness below the energy space was recently proven by Herr and Lenzmann \cite{HL}, 
both in the radial ($s>0$) and non-radial case ($s \geq 1/4$). 
%
%

Without loss of generality we can normalize $m=1$, and rescale $\lambda$ to be $1$ or $-1$ depending on its sign.
In this paper we will only consider small solutions,
and therefore the sign of $\lambda$ will not be relevant, and $\lambda$ will be taken to be $-1$ for convenience.
To better put \eqref{eq} into context in relation to the global well-posedness and scattering theory for the Cauchy problem,
let us consider the following generalized model
\begin{align}
\label{eqgamma}
i\partial_t u - \sqrt{1-\Delta} u = - \left( {|x|}^{-\gamma} \ast {|u|}^2 \right) u  \quad , \quad x \in \R^n \quad , \quad 0 < \gamma < n \, .
\end{align}
In \cite{COSIAM06,COJKMS07}, Cho and Ozawa showed global existence of large solutions for $0 < \gamma < 2n/(n+1)$ for $n \geq 2$,
and small data global existence and scattering for $\gamma > 2$ in dimension $n \geq 3$.
They also proved the non-existence of asymptotically free solutions (i.e. solutions converging to a solution of the linear equation)
for the case $0 < \gamma \leq  1$ when $n\geq 3$, and for $0 < \gamma < n/2$ when $n=1$ or $2$.
Our main result shows that indeed solutions of \eqref{eqgamma} with $\gamma = 1$ in $3d$ scatter to a nonlinear profile.
An additional result that we prove, namely Theorem \ref{theoHg}, closes the gap in the 
small data scattering\footnote{At least for a class of initial data in a suitable weighted Sobolev space.}\footnote{Notice that there 
seem to be no global solutions in the literature, in the intermediate range $3/2 \leq \gamma \leq 2$.}
for $1 < \gamma \leq 2$.

The large data global existence results above were subsequently improved by the same authors \cite{CODCDS08}, 
in the radially symmetric case, to include $1 < \gamma < (2n-1)/n$.
In \cite{COSSDCDS09} the authors obtained scattering for radially symmetric small solutions when $3/2 < \gamma < 2$ and $n \geq 3$.
Cho and Nakanishi \cite{CNRIMS10} obtained several results in higher dimensions:
in dimension $n \geq 4$ they proved global existence with radial symmetry for $1 < \gamma < 2$,
and small data scattering (also without symmetry) for $\gamma = 2$.
We refer to \cite{CNRIMS10} for a survey of some of the techniques employed in the above mentioned papers.

\subsection{Main Result}
We have seen that for certain values of $0<\gamma<2$, large global solutions to \eqref{eqgamma} can be constructed combining conservation laws, 
and low regularity wellposedness or Strichartz estimates (and Hardy's inequalities in order to estimate the nonlinearity).
When the question of scattering is considered, even treating small data outside the energy space is quite challenging.
As mentioned above, in three dimensions, scattering is known if $\gamma$ is large enough 
($\gamma > 3/2$ in the radial case, $\gamma >2$ in the general case).
Clearly, larger values of $\gamma$ are easier to treat, since the time decay of the $L^2$ norm of the nonlinearity in \eqref{eqgamma}, 
computed on a solution of the linear equation, is $t^{-\gamma}$.
Theorem \ref{maintheo} below shows scattering (in a modified sense) for \eqref{eq}, that is \eqref{eqgamma} with $\gamma = 1$. 
We refer to this as the ``scattering-critical'' case, because the decay of the nonlinearity is (barely) non-integrable in time.
Moreover, our proof can be adapted to obtain scattering in the $L^\infty$-subcritical cases $1 < \gamma \leq 2$, which were left open so far. 
See Theorem \ref{theoHg} for details.

This is our main result:
\begin{theorem}\label{maintheo}
Let\footnote{For convenience, and to simplify the proof a bit, we let $N$ be comfortably large;
however, it is certainly possible to reduce the value of $N$ to a number between $10$ and $100$.} 
$N=1000$, 
and let $u_0 : \R^3 \rightarrow \mathbb{C}$ be given such that
\begin{align}
\label{initdata}
{\| u_0 \|}_{H^N} + {\| {\langle x \rangle}^2 u_0 \|}_{H^2} + {\| {(1+|\xi|)}^{10} \what{u_0} \|}_{L^\infty} \leq \e_0 \, .
\end{align}
Then there exists $\bar{\e}_0$ such that for all $\e_0 \leq \bar{\e}_0$,
the Cauchy problem
\begin{equation}
\label{eq1}
\left\{
\begin{array}{l}
i \partial_t u - \sqrt{1-\Delta} u =  -\big( {|x|}^{-1} \ast {|u|}^2 \big) u 
\\
u(t=0,x) = u_0(x)
\end{array}
\right.
\end{equation}
has a unique global solution $u(t,x)$, such that
\begin{align}
\sup_{t \in \R} {(1+|t|)}^{-3/2} {\| u(t) \|}_{L^\infty} \les \e_0 \, .
\end{align}
Moreover, the behavior of $u$ as $t \rightarrow \infty$ can be described as follows.
Let
\begin{align}
\label{maintheocorr}
B(t,\xi) :=  \frac{1}{{(2\pi)}^3} \int_0^t \int_{\R^3} {\left| \frac{\xi}{\langle \xi \rangle} - \frac{\s}{\langle \s\rangle} \right|}^{-1} 
  {|\what{u}(s,\s)|}^2 \, d\s \, \varphi (\xi s^{-1/300}) \frac{ds}{s+1} \, ,
\end{align}
where $\varphi$ is a smooth compactly supported function.
Then, there exists an asymptotic state $f_+$, such that for all $t>0$
\begin{align}
\label{maintheoscatt}
{\big\| {(1+|\xi|)}^{10} \big[ e^{iB(t,\xi)} e^{it \sqrt{1+{|\xi|}^2}} \what{u}(t,\xi) 
  - f_+(\xi) \big] \big\|}_{L^\infty_\xi} \les \e_0 {(1+t)}^{-p_1} \, ,
\end{align}
for some $0 < p_1 < 1/1000$. A similar statement holds for $t<0$.
\end{theorem}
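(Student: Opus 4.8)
The plan is to construct the solution from the local theory of \cite{L1} and globalize it by a bootstrap run simultaneously on three quantities attached to the profile $f(t) := e^{it\sqrt{1-\Delta}}u(t)$ (so that $\what f(t,\xi) = e^{it\langle\xi\rangle}\what u(t,\xi)$, $\langle\xi\rangle := \sqrt{1+{|\xi|}^2}$): the high-regularity energy norm ${\|u(t)\|}_{H^N}$, the low-order weighted norm ${\|{\langle x\rangle}^2 f(t)\|}_{H^2}$, and the pointwise-in-frequency norm ${\|{(1+|\xi|)}^{10}\what f(t,\xi)\|}_{L^\infty_\xi}$. On a maximal existence interval $[0,T)$ I would assume
\begin{align*}
{\| u(t) \|}_{H^N} + {\| {\langle x \rangle}^2 f(t) \|}_{H^2} \les \e_0 {(1+t)}^{p_0}, \qquad {\big\| {(1+|\xi|)}^{10} \what f(t,\xi) \big\|}_{L^\infty_\xi} \les \e_0,
\end{align*}
with a small $p_0 > 0$, and improve all three — the last all the way to the convergence in \eqref{maintheoscatt}. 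The energy bound is the softest: the linear Klein--Gordon dispersive estimate applied to $f$, fed by the $L^\infty_\xi$ and weighted norms, yields ${\|u(t)\|}_{L^\infty}\les\e_0{(1+t)}^{-3/2}$, whence ${\|{|x|}^{-1}\ast{|u|}^2\|}_{L^\infty}\les{\|u\|}_{L^\infty}^{2/3}{\|u\|}_{L^2}^{4/3}\les\e_0^2{(1+t)}^{-1}$, and a standard $H^N$ energy inequality with Gr\"onwall gives ${\|u(t)\|}_{H^N}\les\e_0{(1+t)}^{C\e_0^2}$.

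The core of the matter is the weighted estimate. Duhamel's formula for the profile reads, in Fourier,
\begin{align*}
\partial_t \what f(t,\xi) = c\int_{\R^3\times\R^3} \frac{1}{{|\zeta|}^2} e^{it\Phi}\, \what f(t,\eta+\zeta)\,\overline{\what f(t,\eta)}\,\what f(t,\xi-\zeta)\,d\eta\,d\zeta, \qquad \Phi := \langle\xi\rangle - \langle\xi-\zeta\rangle + \langle\eta\rangle - \langle\eta+\zeta\rangle,
\end{align*}
and the weighted norm is, by Plancherel, ${\langle\nabla_\xi\rangle}^2\what f$ measured in $\langle\xi\rangle^2$-weighted $L^2$. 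The dangerous term is the one in which a $\xi$-derivative falls on the oscillatory factor, producing $t\nabla_\xi\Phi = t\big(\tfrac{\xi}{\langle\xi\rangle}-\tfrac{\xi-\zeta}{\langle\xi-\zeta\rangle}\big)$; the \emph{special null structure} is the elementary bound $|\nabla_\xi\Phi|\les|\zeta|$ (Lipschitz continuity of $v\mapsto v/\langle v\rangle$), which cancels one of the two powers of the singular Hartree kernel ${|\zeta|}^{-2}$. The surviving factor $|\zeta|$ is then traded — by integration by parts in time, or in $\zeta$, together with a dyadic decomposition in $|\zeta|$ and in the frequency gap $\big|\tfrac{\xi}{\langle\xi\rangle}-\tfrac{\eta}{\langle\eta\rangle}\big|$ — against the oscillation $e^{it\Phi}$, whose only space-time stationary set is $\{\zeta = 0,\ \eta = \xi\}$, so as to absorb the remaining power of $t$; supplemented by bilinear/trilinear bounds for the Hartree kernel (Hardy/H\"older inequalities, the pointwise decay and the $H^N$ bound to control high input frequencies, and ${\|\what f\|}_{L^1}\les\e_0$ from the weighted $L^\infty_\xi$ norm), this keeps the weighted norm ${\les}\e_0{(1+t)}^{p_0}$. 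The same mechanism drives the $\gamma>1$ scattering result of Theorem~\ref{theoHg}.

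The remaining, and most delicate, task is to pin down the asymptotics by isolating the resonant contribution. For small $\zeta$ one has $\Phi\approx\zeta\cdot\big(\tfrac{\xi}{\langle\xi\rangle}-\tfrac{\eta}{\langle\eta\rangle}\big)$, and since ${|\zeta|}^{-2}$ is, up to a constant, the Fourier transform of ${|x|}^{-1}$ on $\R^3$ — with value $\sim{|v|}^{-1}$ at $v$ — integrating out the small-$\zeta$ region and replacing $\what f(\eta+\zeta),\what f(\xi-\zeta)$ by $\what f(\eta),\what f(\xi)$ produces
\begin{align*}
\partial_t\what f(t,\xi) = -i\,\dot B(t,\xi)\,\what f(t,\xi) + \mathcal{E}(t,\xi), \qquad \dot B(t,\xi)\;\sim\;\frac{1}{1+t}\int_{\R^3}{\Big|\frac{\xi}{\langle\xi\rangle} - \frac{\eta}{\langle\eta\rangle}\Big|}^{-1}{|\what u(t,\eta)|}^2\,d\eta,
\end{align*}
which, upon inserting the cutoff $\varphi(\xi s^{-1/300})$ and integrating in time, is exactly the phase correction $B(t,\xi)$ of \eqref{maintheocorr}; the cutoff discards the range $|\xi|\gtrsim s^{1/300}$, where the ${(1+|\xi|)}^{10}$-weighted bound already supplies enough extra time decay that no correction is needed. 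The error $\mathcal{E}$ — the $|\zeta|\gtrsim1$ contribution, the non-resonant part of the small-$\zeta$ region (where $|\nabla_\zeta\Phi|$ is bounded below, so one integrates by parts in $\zeta$), and the cutoff tails together with the replacements above — is shown, using the weighted and pointwise bounds, to satisfy ${\|{(1+|\xi|)}^{10}\mathcal{E}(t,\xi)\|}_{L^\infty_\xi}\les\e_0^3{(1+t)}^{-1-p_1}$ for a small $p_1 < 1/1000$. Since $\partial_t\big(e^{iB(t,\xi)}\what f(t,\xi)\big) = e^{iB(t,\xi)}\mathcal{E}(t,\xi)$, this is absolutely integrable in time, so ${(1+|\xi|)}^{10}e^{iB}\what f$ converges in $L^\infty_\xi$ — at rate ${(1+t)}^{-p_1}$ — to an asymptotic state $f_+$, which is \eqref{maintheoscatt}. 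Uniqueness comes from the local theory, the $t<0$ statement by time reversal, and collecting the improved bounds closes the bootstrap, giving $T=\infty$.

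The step I expect to be the main obstacle is the weighted estimate of the second paragraph: making the null structure $|\nabla_\xi\Phi|\les|\zeta|$ prevail over the genuinely singular, barely-non-integrable-in-time Hartree interaction, which demands a careful case analysis across all frequency regimes — small versus large $|\zeta|$, low versus high input and output frequencies, near versus far from the resonant set — carried out with only two $x$-weights to spend. A close second is the bookkeeping in the asymptotic analysis: fixing the cutoff exponent $1/300$ and the several resonance-width thresholds so that every error term is genuinely $O({(1+t)}^{-1-p_1})$, while keeping the small exponents $p_0,\ p_1,\ 1/300,\ 1/1000$ mutually consistent throughout the argument.
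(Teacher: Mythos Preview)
Your bootstrap framework, the identification of the null structure $|\nabla_\xi\Phi|\les|\zeta|$, and the asymptotic analysis leading to the phase correction $B(t,\xi)$ all match the paper's approach. The substantive difference is in the weighted estimate, which you flag as the hardest step and propose to handle by trading the surviving factor against the oscillation $e^{it\Phi}$ via integration by parts in time or in $\zeta$. The paper takes a more direct and considerably shorter route: once the null structure has reduced the dangerous symbol from $s\,|\zeta|^{-2}$ to $s\,m(\xi,\zeta)|\zeta|^{-2}$ with $|m|\les|\zeta|$, one simply applies a pseudo-product bound (Lemma~\ref{lemprod}) after dyadic localization $|\zeta|\sim 2^{k_2}$, and closes using ${\|P_{k_2}|u|^2\|}_{L^\infty}\les\min\{2^{3k_2},(1+s)^{-3}\}\e_1^2$ --- no oscillation or integration by parts is invoked at this stage. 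Summing over $k_2$ already gives ${\|{\langle\xi\rangle}^2\partial_\xi I(s)\|}_{L^2}\les\e_1^3(1+s)^{-1+p_0}$. The paper does mention the space-time-resonance route you sketch as an alternative (in a footnote), but the direct argument makes the weighted estimate the \emph{easier} half and pushes the real work to the $L^\infty_\xi$ analysis (Section~\ref{secLinfty}), where integration by parts in $\zeta$ \emph{is} used --- up to twice, with a carefully chosen threshold $|\zeta|\sim s^{-29/40}$ separating the correction region from the remainder. One further bookkeeping point: the paper tracks ${\|xf\|}_{H^2}$ and ${\|x^2f\|}_{H^2}$ separately, with growth $(1+t)^{p_0}$ and $(1+t)^{2p_0}$ respectively, rather than a single ${\|{\langle x\rangle}^2 f\|}_{H^2}$; both weights are then spent in the remainder estimates of Proposition~\ref{pronr}.
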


Solutions of \eqref{eq1} will be constructed through a priori estimates in the space given by the norm \eqref{norm}.
We refer to section \ref{secideas} for some explanation of the main ideas involved the proof of Theorem \ref{maintheo},
and to section \ref{secstra} for a detailed description of our strategy.

It would be possible to express the asymptotic behavior of a solution of \eqref{eq1}
in physical coordinates rather than in Fourier space.
However, the asymptotic formula \eqref{maintheocorr}-\eqref{maintheoscatt}
clearly emerges from our proof, which is performed in Fourier space, 
and can be seen from some heuristic considerations, see section \ref{secideas}.
Therefore we leave \eqref{maintheocorr}-\eqref{maintheoscatt} as a satisfactory description of modified scattering.

Before moving on to describe the difficulties and the tools involved in the proof of Theorem \ref{maintheo},
let us mention some known results concerning modified scattering.
Famous examples of dispersive PDEs whose solutions exhibit a behavior which is qualitatively different 
from the behavior of a linear solution are the nonlinear Sch\"rodinger \cite{MZ,DZNLS,HN,KP}, 
the Benjamin-Ono \cite{AFBO,HNBO}, and the mKdV \cite{DZmKdV,HNmKdV} equations.
Besides these one-dimensional completely integrable examples, for which large data results are also available, 
the phenomenon of modified scattering for small solutions has been observed in several other equations.
Example are given by Hartree equations \cite{HN,KP}, Klein-Gordon equations \cite{DelortKG1d}, 
and, more recently, gravity water waves \cite{2dWW}
(see also \cite{FNLS} for a simpler fractional Sch\"{o}dinger model, and \cite{AD} for a similar result on the water waves system).



\vskip15pt
\subsection*{Notations}
We define the Fourier transform by
\begin{align*}
\mathcal{F} g (\xi) = \what{g}(\xi) := \int_{\R^3} e^{-ix \cdot \xi} g(x) \,dx 
  \quad \implies \quad g(x) = \frac{1}{{(2\pi)}^{3}} \int_{\R^3} e^{ix \cdot \xi} \what{g}(\xi) \,d\xi \, .
\end{align*}
We fix $\varphi:\mathbb{R}\to[0,1]$ an even smooth function supported in $[-8/5,8/5]$ and 
equal to $1$ in $[-5/4,5/4]$, and let
\begin{equation}
 \label{phi_k}
\varphi_k(x):=\varphi(|x|/2^k)-\varphi(|x|/2^{k-1}) \, , \, \qquad k \in \Z \, , \, x \in \R^3 \, .
\end{equation}
For any interval $I\subseteq\mathbb{R}$ we define
\begin{equation}\label{phi_k1}
\varphi_I := \sum_{k\in I\cap\mathbb{Z}} \varphi_k  \, . 
\end{equation}
More generally, for any $m,k\in\mathbb{Z}$, $m\leq k$, and $x \in \R^3$ we define
\begin{equation}\label{phi^m_k}
 \varphi^{(m)}_k(x):=
\begin{cases}
\varphi(|x|/2^k) - \varphi(|x|/2^{k-1}),\qquad & \text{ if } k \geq m+1 \, ,
\\
\varphi(|x|/2^k),\qquad &\text{ if } k=m \, .
\end{cases}
\end{equation}
We let $P_k$, $k\in\mathbb{Z}$, denote the operator on $\R^3$ defined by the Fourier multiplier $\xi\to \varphi_k(\xi)$.
We will sometimes denote $f_k = P_k f$.
For an integer $n\in\Z$ we denote $n_+ = \max(0,n)$.

\vskip15pt
\section{Main ideas}\label{secideas}
Let $p_0 = 1/1000$, $N = 1000$ and $\Lambda(\nabla) := \sqrt{1-\Delta}$. 
Define
\begin{align}
 \label{prof}
f(t,x) := (e^{it \Lambda} u )(t,x) \, ,
\end{align}
where $u(t)$ is a solution of \eqref{eq1}.
We will solve \eqref{eq1} in the space given by the norm
\begin{align}
\label{norm}
\begin{split}
\sup_t \left[ {(1+t)}^{-p_0} {\| u(t) \|}_{H^N} + 
  {(1+t)}^{-p_0} {\left\| x f(t) \right\|}_{H^2} + {(1+t)}^{-2 p_0} {\left\| x f(t) \right\|}_{H^2}
  + {\big\| {(1+|\xi|)}^{10} \widehat{f}(t) \big\|}_{L^\infty}  \right] \, .
\end{split}
\end{align}
%
If $f$ is defined as in \eqref{prof}, we can write Duhamel's formula for \eqref{eq} in Fourier space as follows:
\begin{align}
 \label{inteq}
\begin{split}
\widehat{f}(t,\xi) & = \widehat{u_0}(\xi) +  \int_0^t I(s,\xi) \, ds \, ,
\\
I(s,\xi) & := i c_1 \iint_{\R^3 \times \R^3} e^{is [-\Lambda(\xi)+\Lambda(\xi-\eta)+\Lambda(\eta+\s)-\Lambda(\s)]} {|\eta|}^{-2}
     \what{f}(s,\xi-\eta) \widehat{f}(s,\eta+\s) \overline{\what{f}}(s,\sigma) \, d\eta d\s  \, ,
\\
c_1 & :=  2 {(2\pi)}^{-5} \, . 
\end{split}
\end{align}
Here we used $\F (|x|^{-1}) (\xi) = 4 \pi {|\xi|}^{-2}$.

\vskip10pt
\subsection*{Norms and decay}
When dealing with nonlinear equations which are scattering-critical in the sense explained in the introduction,
one often has to resort to spaces which incorporate some strong decay information.
Even more so if modified scattering is expected.
In this case, one needs to extract very precise asymptotic information, 
and be able to prove some $L^\infty_t L^p_x$ bound on solutions.

Thanks to the decay estimate \eqref{disperse0}, one sees that solutions whose norm \eqref{norm} is bounded,
decay pointwise like a solution of the linear equation, i.e. at the rate of $t^{-3/2}$.
It is important to underline the key role played by the $\F^{-1} L^\infty$-norm.
Since the equation \eqref{eq1} is $L^\infty$-critical, 
one might {\em not} be able to prove a bound on weighted $L^2$-norms of $f$ which is uniform in time.
Therefore, sharp time decay cannot be obtained as a consequence of standard (weighted) $L^p-L^q$ linear estimates.
The idea, already exploited in several works on other critical models,
is to include a norm which guarantees decay, but is weaker than $L^1$, and as such can be controlled uniformly in time.
Our choice is the last norm appearing in \eqref{norm}.

\subsection*{Weighted Estimates}
Weighted norms play an important role in the whole construction.
Firstly, they control remainders in the linear estimate \eqref{disperse0}.
Secondly, and most importantly,
they are used to control remainders in the asymptotic expansions which allow us to bound the $\F^{-1} L^\infty$-norm
(see the next paragraph below for more on this).
In the literature, a standard way of establishing weighted estimates is given by the use of vectorfields \cite{K0,K1}.
In the case of the boson star equation, the use of such a tool is limited by the lack of scaling and Lorentz invariance.
The quantities we shall control are $xf$ and $x^2f$ in $L^2$. These correspond to $\Gamma u$ and $\Gamma^2 u$ in $L^2$, 
for $\Gamma = x - it\Lambda^\p$.
Despite the fact that $\Gamma$ does not commute properly with the equation,
we will be able to bound these weighted norms as follows.
We apply $\nabla_\xi$ and $\nabla_\xi^2$ to $\what{f}$ as given in \eqref{inteq}. 
The worst term obtained by applying $\nabla_\xi$ to $I(s,\xi)$ is of the form
\begin{align*}
\iint_{\R^3 \times \R^3} e^{is [-\Lambda(\xi)+\Lambda(\xi-\eta)+\Lambda(\eta+\s)-\Lambda(\s)]} \, s m(\xi,\eta){|\eta|}^{-2}
     \what{f}(s,\xi-\eta) \widehat{f}(s,\eta+\s) \overline{\what{f}}(s,\sigma) \, d\eta d\s  \, ,
\end{align*}
where $m(\xi,\eta) = \nabla_\xi(-\Lambda(\xi) + \Lambda(\xi-\eta))$.
Now notice that $m$ is a smooth function with $m(\xi,0) = 0$.
This is essentially a null condition satisfied by the equation\footnote{This type of generalized null condition 
was used in \cite{GMS2} and \cite{nullcondition}, as an important aspect of the space-time resonance analysis.}.
Thanks to this, we can think that the multiplier $s m(\xi,\eta){|\eta|}^{-2}$ behaves, 
as far as estimates are concerned, like the original Coulomb potential ${|\eta|}^{-2}$,
so that the loss of the factor $s$ can be recovered\footnote{Another 
alternative possibility is to proceed similarly to \cite{nullcondition} and \cite{zakharov}, 
by exploiting an algebraic identity for the phase $\phi (\xi,\eta,\s) = -\Lambda(\xi)+\Lambda(\xi-\eta)+\Lambda(\eta+\s)-\Lambda(\s)$, 
of the form $\nabla_\xi \phi = L(\nabla_\eta \phi, \nabla_\s \phi, \phi)$, 
where $L$ denotes some linear combination with coefficients given by smooth functions of $(\xi,\eta,\s)$.
One could use such an identity to integrate by parts in time and frequency and recover the loss of $s$.}.
We can then control $xf$ and $x^2f$ in $L^2$, allowing a small growth in $t$.

\vskip10pt
\subsection*{Asymptotic analysis}
Let us change variables in \eqref{inteq} and write
\begin{align}
 \label{inteq1}
I(s,\xi) & := i c_1 \iint e^{is [-\Lambda(\xi)+\Lambda(\xi+\eta)+\Lambda(\xi+\s)+\Lambda(\xi+\eta+\s)]} {|\eta|}^{-2}
     \what{f}(s,\xi+\eta) \widehat{f}(s,\xi+\s) \overline{\what{f}}(s,\xi+\eta+\s) \, d\eta d\s  \, . 
\end{align}
Our goal is to identify the leading order term of the above expression in terms of powers of $s$,
neglecting all contributions that decay faster than $s^{-1}$.
Let us assume that $|\xi| \sim 1$ and we are integrating on a region $|\eta| \les s^{l}$, with $l < 0$ small enough, 
but not so small that the integral of $|\eta|^{-2}$ over this region is $O(s^{-1-})$.
We can then Taylor expand the oscillating phase, and approximate $I(s,\xi)$ by
\begin{align*}
i c_1 \iint e^{is \eta \cdot z} {|\eta|}^{-2}
     \what{f}(s,\xi+\eta) \widehat{f}(s,\xi+\s) \overline{\what{f}}(s,\xi+\eta+\s) \, d\eta d\s  \, ,
\end{align*}
where $z = z(\xi,\s) := \s/\langle \s \rangle - \xi/\langle \xi \rangle$.
Using the bounds on $\partial \what{f}$, i.e. on weighted norms, we can further approximate the expression above by
\begin{align*}
& i c_1 \iint e^{is \eta \cdot z} {|\eta|}^{-2}
     \what{f}(s,\xi) \widehat{f}(s,\xi+\s) \overline{\what{f}}(s,\xi+\s) \, d\eta d\s  
\\
= \, \, & i c_1 \what{f}(s,\xi) \int \F({|\eta|}^{-2})(s z) \, {|\what{f}(s,\xi+\s)|}^2 \, d\s 
= \frac{i}{s} \what{f}(s,\xi) C(s,\xi) \, ,
\end{align*}
for some function $C(s,\xi)$ which is real-valued, and uniformly bounded under suitable assumptions on $\what{f}$.
Thanks to the above we have obtained 
\begin{align*}
\partial_t \what{f}(t,\xi) = i t^{-1} \what{f}(t,\xi) C(t,\xi) + O(t^{-1-}) \, ,
\end{align*}
from which we can deduce a uniform bound on $\sup_{t,\xi} |\what{f}(t,\xi)|$.
The estimates leading to this latter bound will also show the modified scattering property \eqref{maintheoscatt}.

In order to make the above intuition rigorous, we need to identify a suitable scale in $\eta$, say $s^{l_0}$,
such that the above asymptotics are true for $|\eta| \les s^{l_0}$,
and, at the same time, the integral \eqref{inteq1} on the region $|\eta| \gtrsim s^{l_0}$ is $O(s^{-1-})$.
Lemma \ref{lemcorr} contains the derivation of the asymptotic correction term in the critical region.
The remaining contributions are estimated in section \ref{secrem}, using integration by parts in $\eta$.


\vskip15pt
\section{Strategy of the proof}\label{secstra}
Local-in-time solutions to \eqref{eq1} can be constructed by a standard fixed point argument.
Given a local solution $u$ on a time interval $[0,T]$, we assume that the following norm is a priori small:
\begin{align}
\label{apriori}
\begin{split}
 {\|u\|}_{X_T} := \sup_{t \in [0,T]} \Big[ {(1+t)}^{-p_0} {\| u(t) \|}_{H^N} + 
  {(1+t)}^{-p_0} {\left\| x f(t) \right\|}_{H^2} + {(1+t)}^{-2 p_0} {\left\| x^2 f(t) \right\|}_{H^2}
\\
+ \,\, {\big\| {(1+|\xi|)}^{10} \widehat{f}(t,\xi) \big\|}_{L^\infty_\xi}  \Big] \leq \e_1 \, .
\end{split}
\end{align}
To obtain the existence of a global solution which is bounded in the space $X_T$ it will suffice to show
\begin{align}
\label{conc}
 {\| u \|}_{X_T} \leq \e_0 + C \e_1^3 \, ,
\end{align}
where $\e_0$ is the size of the initial datum, see \eqref{initdata}.

In order to deduce sharp pointwise decay from the above a priori bounds we will use the following:
\begin{proposition}[Refined Linear Decay Estimate]\label{prodecay}
  For any $t\in\mathbb{R}$ one has
\begin{align}
\label{disperse0}
{\big\|e^{i t \sqrt{1-\Delta}} f \big\|}_{L^\infty} 
  \les \frac{1}{(1+|t|)^{3/2}} {\big\| {(1+|\xi|)}^6 \what{f}(\xi) \big\|}_{L^\infty_\xi}
  + \frac{1}{(1+|t|)^{31/20}} \Big[ {\big\| {\langle x \rangle}^2 f \big\|}_{L^2} + {\|f\|}_{H^{50}} \Big] \, .
\end{align}
\end{proposition}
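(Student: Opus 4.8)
The plan is to prove the refined linear decay estimate \eqref{disperse0} by a stationary phase analysis of the oscillatory integral representation of $e^{it\sqrt{1-\Delta}}f$, carefully tracking how the large-frequency behavior and the weighted norms of $f$ enter the remainder. Writing
\begin{align*}
\big( e^{it\sqrt{1-\Delta}} f \big)(x) = \frac{1}{(2\pi)^3} \int_{\R^3} e^{i(x\cdot\xi + t\langle\xi\rangle)} \what{f}(\xi) \, d\xi \, ,
\end{align*}
the phase is $\phi(\xi) = x\cdot\xi/t + \langle\xi\rangle$, with $\nabla_\xi\phi = x/t + \xi/\langle\xi\rangle$, and the stationary point (when it exists, i.e.\ for $|x/t| < 1$) is $\xi_0$ with $\xi_0/\langle\xi_0\rangle = -x/t$. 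The key feature is that $|\xi_0|$ can be arbitrarily large as $|x/t|\to 1$, which forces the localization of the argument in frequency; this is why a plain $\F^{-1}L^\infty$ norm is not enough and one pays with the weighted $L^2$ norms on a low-power correction $(1+|t|)^{-31/20}$.

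The steps I would carry out, in order. First, dispose of $|t|\lesssim 1$ trivially: by Sobolev embedding $\|e^{it\sqrt{1-\Delta}}f\|_{L^\infty} \lesssim \|f\|_{H^2} \lesssim \|f\|_{H^{50}}$, which is absorbed in the second term. Second, for $|t|\geq 1$ perform a Littlewood--Paley decomposition $f = \sum_k P_k f$ and estimate $\|e^{it\sqrt{1-\Delta}}P_k f\|_{L^\infty}$ for each $k$. For the high-frequency pieces ($2^k \gtrsim |t|^{\delta}$ for a suitable small $\delta$), I would simply use Bernstein together with the $H^{50}$ norm: $\|e^{it\sqrt{1-\Delta}}P_k f\|_{L^\infty} \lesssim 2^{3k/2}\|P_k f\|_{L^2} \lesssim 2^{-(50-3/2)k}\|f\|_{H^{50}}$, and sum; choosing $\delta$ appropriately this contributes $O(|t|^{-3/2})$ times $\|f\|_{H^{50}}$ or better. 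For the low-frequency pieces, I would apply stationary phase on each dyadic shell. On the shell $|\xi|\sim 2^k$ the Hessian of $\langle\xi\rangle$ is $\langle\xi\rangle^{-1}(\mathrm{Id} - \xi\otimes\xi/\langle\xi\rangle^2)$, which is nondegenerate with eigenvalues comparable to $\langle\xi\rangle^{-1}$ and $\langle\xi\rangle^{-3}$; hence the standard $3$-dimensional stationary phase / Van der Corput estimate gives a gain of $|t|^{-3/2}$ times powers of $\langle\xi_0\rangle$. The contribution near the stationary point is controlled by $|t|^{-3/2}\langle\xi_0\rangle^{C}|\what{f}(\xi_0)| \lesssim |t|^{-3/2}\|(1+|\xi|)^6\what{f}\|_{L^\infty}$ provided $C\leq 6$; one checks the exponent $6$ is enough by counting the powers of $\langle\xi\rangle$ coming from the square root of the determinant of the Hessian and from the cutoff to the relevant frequency range. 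Third, and this is where the weighted norms are used, the non-stationary contributions — either from shells with no stationary point (i.e.\ $|x/t|$ away from the range swept by that shell) or from the error terms in the stationary phase expansion — are handled by integration by parts in $\xi$, each integration producing a factor $|t|^{-1}|\nabla_\xi\phi|^{-1}$ together with $\nabla_\xi$ hitting either $\what{f}$ (producing $\widehat{xf}$, hence $\langle x\rangle f$ in $L^2$ via Plancherel after re-localizing) or the amplitude. Summing the dyadic pieces and optimizing the cutoff threshold $2^k \sim |t|^{\delta}$ produces the claimed remainder of size $|t|^{-31/20}[\|\langle x\rangle^2 f\|_{L^2} + \|f\|_{H^{50}}]$; the precise exponent $31/20 = 3/2 + 1/20$ comes from the trade-off between the number of frequency shells one sums over and the loss incurred at the cutoff scale.

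I expect the main obstacle to be the bookkeeping at the threshold between the ``stationary phase regime'' and the ``Bernstein regime'' in frequency, together with keeping track of exactly how many powers of $\langle\xi\rangle$ are generated, so that one ends up needing only $(1+|\xi|)^6$ in the $\F^{-1}L^\infty$ term and only two spatial weights and $H^{50}$ regularity in the remainder. A clean way to organize this is to first prove, by the $L^2$-based stationary phase / $TT^*$ method, a dispersive estimate of the schematic form $\|e^{it\sqrt{1-\Delta}}P_kf\|_{L^\infty} \lesssim |t|^{-3/2} 2^{ak}\|P_k f\|_{L^1}$ on each shell, then interpolate/bound the $L^1$ norm of $P_kf$ by $\|\langle x\rangle^2 P_k f\|_{L^2}$ (Cauchy--Schwarz) and feed in the $\F^{-1}L^\infty$ norm only where the stationary point genuinely sits in the support, i.e.\ only for $O(1)$ many shells at each $(t,x)$. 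Everything else is routine: nonstationary phase, Bernstein, and summation of geometric series in $k$. The one genuinely delicate point is verifying that the localization cost never exceeds $\langle\xi_0\rangle^{6}$ and the remainder exponent can be taken strictly above $3/2$; I would handle this by explicitly choosing $\delta$ small (e.g.\ $\delta = 1/40$) and checking the two resulting estimates match at that scale.
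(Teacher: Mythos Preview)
Your proposal is correct and follows essentially the same route as the paper: Littlewood--Paley decomposition in frequency, trivial bounds for very high frequencies ($2^k \gtrsim |t|^{1/30}$) via Bernstein and the $H^{50}$ norm, trivial bounds for very low frequencies ($2^k \lesssim |t|^{-1/2}$) via $\|\what f\|_{L^\infty}$, and in the intermediate range a stationary-phase/integration-by-parts argument around the unique critical point $\xi_0$ of the phase $\phi(\xi)=\langle\xi\rangle+\xi\cdot x/t$.

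The only differences are in the execution rather than the idea. The paper does not invoke a per-shell $L^1\!\to\!L^\infty$ dispersive estimate or $TT^*$; instead it integrates by parts twice \emph{directly} in the radial variable on each shell, both in the non-stationary regime ($2^k$ away from $|\xi_0|$) and, after a further dyadic decomposition in $|\xi-\xi_0|$ at scales $2^l\geq |t|^{-1/2}$, in the stationary regime. Your alternative organization via a shell-wise dispersive bound $\|e^{it\Lambda}P_kf\|_{L^\infty}\lesssim |t|^{-3/2}2^{ak}\|P_kf\|_{L^1}$ together with $\|P_kf\|_{L^1}\lesssim\|\langle x\rangle^2 P_kf\|_{L^2}$ would also work and gives a slightly more modular argument, but you would still need a separate step isolating the genuine stationary contribution (one value of $k$) where the $\F^{-1}L^\infty$ norm is used with no loss; the paper's direct approach handles both in the same framework. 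Your identification of the Hessian eigenvalues $\sim\langle\xi\rangle^{-1},\langle\xi\rangle^{-1},\langle\xi\rangle^{-3}$ and of the delicate bookkeeping at the high-frequency threshold is exactly right.
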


\noindent
Proposition \ref{prodecay} is proven in section \ref{seclinest}.
As a consequence of \eqref{disperse0} and the a priori assumptions \eqref{apriori} we have for $t\in[0,T]$
\begin{align}
 \label{aprioridecay}
{\| u(t) \|}_{W^{2,\infty}} \les \e_1 {(1+t)}^{-3/2} \, .
\end{align}

The proof of \eqref{conc} will be done in two main steps given by the following Propositions.

\begin{proposition}[Weighted Energy Estimates]\label{proW}
Assume that $f\in C([0,T]:H^N)$ satisfies the a priori assumptions \eqref{apriori}, and let $p_0 = 1/1000$. Then, 
\begin{align*}
& \sup_{t \in[0,T] } {(1+t)}^{-p_0} {\left\| f(t) \right\|}_{H^N} \leq \e_0 + C \e_1^3 \, ,
\end{align*}
and
\begin{align*}
& \sup_{t \in[0,T] } {(1+t)}^{-p_0} {\left\| \langle x \rangle f(t) \right\|}_{H^2} \leq \e_0 + C \e_1^3 \, ,
\\
& \sup_{t \in[0,T] } {(1+t)}^{- 2p_0} {\big\| {\langle x \rangle}^2 f(t) \big\|}_{H^2} \leq \e_0 + C \e_1^3 \, .
\end{align*}
\end{proposition}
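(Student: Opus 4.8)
All three bounds come from energy estimates --- run as a continuity (bootstrap) argument under the a priori hypothesis \eqref{apriori} --- for $\widehat{f}$ and for its first and second $\xi$-derivatives, carried out \emph{in the order stated}: the estimate for ${\|\langle x\rangle f\|}_{H^2}$ is allowed to use the one for ${\|f\|}_{H^N}$, and the one for ${\|\langle x\rangle^2 f\|}_{H^2}$ to use both previous ones. The analytic inputs are the pointwise bound \eqref{aprioridecay}, conservation of mass ${\|u(t)\|}_{L^2}={\|u_0\|}_{L^2}\le\e_0$, and the convolution estimates
\begin{align*}
{\big\| {|x|}^{-1}\ast\nabla^j{|u|}^2 \big\|}_{L^\infty} \les \e_1^2\,{(1+t)}^{-1}\,,\qquad {\big\| {|x|}^{-2}\ast\nabla^j{|u|}^2 \big\|}_{L^\infty} \les \e_1^2\,{(1+t)}^{-2}\,,\qquad j\le 2\,,
\end{align*}
which follow from Hardy--Littlewood--Sobolev (${|x|}^{-1}\in L^{3,\infty}(\R^3)$, ${|x|}^{-2}\in L^{3/2,\infty}(\R^3)$), interpolation between $L^1$ and $L^\infty$ for $\nabla^j{|u|}^2$, \eqref{aprioridecay}, and conservation of mass. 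The key point is that the Coulomb convolution decays only at the \emph{non-integrable} rate ${(1+t)}^{-1}$ --- this is the scattering-criticality of \eqref{eq1} --- while passing to the kernel ${|x|}^{-2}$ buys one full, integrable, power of $t$; the slack ${(1+t)}^{p_0}$, ${(1+t)}^{2p_0}$ in \eqref{apriori} is exactly what absorbs the resulting logarithmic-in-$t$ losses.

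\textbf{The $H^N$ estimate.} Since $\Lambda=\sqrt{1-\Delta}$ is self-adjoint and commutes with ${\langle\nabla\rangle}^N$, and since the Hartree potential ${|x|}^{-1}\ast{|u|}^2$ is real-valued, the top-order contribution to $\tfrac{d}{dt}{\|f\|}_{H^N}^2$ (all $N$ derivatives on the external factor $u$) vanishes by antisymmetry of the $L^2$ pairing. What remains is a commutator $\big[{\langle\nabla\rangle}^N,{|x|}^{-1}\ast{|u|}^2\big]u$, estimated by a Kato--Ponce inequality together with Gagliardo--Nirenberg interpolation for $\nabla^k({|u|}^2)$; its leading term carries the factor ${\big\|\nabla({|x|}^{-1}\ast{|u|}^2)\big\|}_{L^\infty}={\big\|{|x|}^{-2}\ast{|u|}^2\big\|}_{L^\infty}\les\e_1^2{(1+t)}^{-2}$, which is integrable, so Gr\"onwall gives ${\|f(t)\|}_{H^N}\le\e_0+C\e_1^3$ --- in fact with no growth in $t$.

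\textbf{The weighted estimates.} We differentiate Duhamel's formula \eqref{inteq} in $\xi$, once and twice, and run the $L^2_\xi$ energy estimate for ${\langle\xi\rangle}^2\nabla_\xi\widehat f$ and ${\langle\xi\rangle}^2\nabla_\xi^2\widehat f$ (equivalently for $\Gamma u$, $\Gamma^2 u$ in $H^2$, $\Gamma=x-it\Lambda^\p$). A derivative $\nabla_\xi$ falling on the factor $\widehat f(s,\xi-\eta)$ in $I(s,\xi)$ is the \emph{benign} case: it merely reproduces the trilinear structure of $I$ with one $\widehat f$ replaced by its derivative, so in the energy estimate it contributes a term controlled by ${\big\|{|x|}^{-1}\ast{|u|}^2\big\|}_{L^\infty}$ times the weighted norm (plus lower-order commutators handled as in the $H^N$ step); this enters a Gr\"onwall inequality with small coefficient $\les\e_1^2{(1+s)}^{-1}$, and it is precisely this \emph{non-integrable} ${(1+s)}^{-1}$ weight that forces --- and only allows --- the growth ${\|\langle x\rangle f\|}_{H^2}\les{(1+t)}^{C\e_1^2}\les{(1+t)}^{p_0}$. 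A derivative falling on the oscillatory factor $e^{is\phi}$ is the \emph{singular} case: it produces the extra multiplier $s\,m(\xi,\eta){|\eta|}^{-2}$ with $m(\xi,\eta)=\nabla_\xi\big(-\Lambda(\xi)+\Lambda(\xi-\eta)\big)=-\Lambda^\p(\xi)+\Lambda^\p(\xi-\eta)$, and here the generalized null condition $m(\xi,0)=0$ --- write $m(\xi,\eta)=-\big(\int_0^1 D^2\Lambda(\xi-\theta\eta)\,d\theta\big)\eta$ --- exhibits $s\,m(\xi,\eta){|\eta|}^{-2}$ as $s$ times a smooth bounded symbol times the \emph{less singular} factor ${|\eta|}^{-1}$, i.e.\ $s$ times a Hartree-type trilinear form with convolution kernel ${|x|}^{-2}$ in place of the Coulomb kernel ${|x|}^{-1}$; the factor $s$ is then absorbed by the improved decay ${\big\|{|x|}^{-2}\ast{|u|}^2\big\|}_{L^\infty}\les\e_1^2{(1+s)}^{-2}$, the net forcing being $\les\e_1^3\,s\,{(1+s)}^{-2}$, whose time integral is only logarithmic. (Alternatively, as recalled in the footnote to section \ref{secideas}, one uses the algebraic identity expressing $\nabla_\xi\phi$ through $\nabla_\eta\phi$, $\nabla_\sigma\phi$ and $\phi$ to integrate by parts in frequency and time and recover the full factor $s$, reducing to a genuine Coulomb form.) For $\nabla_\xi^2$ there are two more kinds of terms: the \emph{doubly singular} one, both $\nabla_\xi$ on the phase, with multiplier $s^2\,m\otimes m\,{|\eta|}^{-2}=s^2\times(\text{smooth bounded symbol})$ --- hence $s^2$ times a plain cubic term (no convolution kernel), bounded by $\les s^2{\|u\|}_{L^\infty}^2{\|f\|}_{H^2}\les\e_1^3\,s^2\,{(1+s)}^{-3}$, again with only logarithmic time integral; and the \emph{mixed} terms (one $\nabla_\xi$ on the phase and one on $\widehat f(\xi-\eta)$, or acting through $\nabla_\xi m=-D^2\Lambda(\xi)+D^2\Lambda(\xi-\eta)$, which \emph{also} vanishes at $\eta=0$), estimated like the singular term but now feeding in ${\|\langle x\rangle f\|}_{H^2}\les\e_1{(1+s)}^{p_0}$ from the previous step --- this compounding is what upgrades the admissible exponent from $p_0$ to $2p_0$. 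Summing and applying Gr\"onwall gives the two weighted bounds; the case $t<0$ is identical.

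\textbf{The main obstacle.} The heart of the matter is controlling the factor $s$ in the singular terms, i.e.\ making rigorous the heuristic that ``$s\,m(\xi,\eta){|\eta|}^{-2}$ behaves like ${|\eta|}^{-2}$'': the null condition $m(\xi,0)=0$ reduces it to $s$ times a ${|x|}^{-2}$-Hartree form, but this must be married to the quantitative fact that replacing ${|x|}^{-1}$ by ${|x|}^{-2}$ improves the time decay by exactly one power --- which is the \emph{borderline} amount, since the Coulomb convolution itself decays only at the non-integrable rate ${(1+t)}^{-1}$. One must also keep careful track of the frequency localizations (high frequencies of the undifferentiated factors $\widehat f$ are paid for by the weight ${(1+|\xi|)}^{10}$ in \eqref{norm}, or traded against ${\|f\|}_{H^N}$), verify that the region $\eta\to0$ is harmless (${|\eta|}^{-2}$ is locally integrable in $\R^3$), and --- because $\Gamma$ does not commute with the equation --- control the error terms in which $\Gamma$, or a Leibniz-type commutator of $\Gamma$ with the nonlocal operator $\Lambda^\p(\nabla)$, lands on the potential rather than on the external $u$. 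The remaining energy-estimate scaffolding, and the Gr\"onwall bookkeeping in which the smallness of $\e_1$ must beat $p_0$, is routine.
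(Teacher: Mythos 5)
Your plan is correct and follows essentially the same route as the paper: differentiate Duhamel's formula in $\xi$ once and twice, treat the ``benign'' terms (derivative on $\widehat f$) with the Hartree $L^\infty$ decay $\big\||x|^{-1}\ast|u|^2\big\|_{L^\infty}\lesssim\e_1^2(1+s)^{-1}$ and a bootstrap that produces the admissible $(1+t)^{p_0}$ or $(1+t)^{2p_0}$ growth, and treat the ``singular'' terms (derivative on $e^{is\phi}$) via the null condition $m(\xi,\eta)=O(|\eta|)$, which trades the factor $s$ against one power of $|\eta|^{-1}$, i.e.\ against the improved decay of the $|x|^{-2}$-Hartree potential. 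The paper implements this in Fourier space with a dyadic decomposition and the pseudo-product Lemma~\ref{lemprod} rather than a physical-space commutator estimate, and in particular uses the Bernstein interpolation $\|P_{k_1}xf\|_{L^2}\lesssim 2^{k_1/2}\|xf\|_{L^2}^{1/2}\|x^2f\|_{L^2}^{1/2}$ in the $J_2$ term, which is the technical device realizing your ``compounding'' mechanism that upgrades $p_0$ to $2p_0$; these are details to fill in, not gaps in the idea.

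The one place where you genuinely deviate is the $H^N$ bound: the paper simply writes $\|u(t)\|_{H^N}\le\|u_0\|_{H^N}+C\int_0^t\|u\|_{L^2}\|u\|_{L^6}\|u\|_{H^N}\,ds$ and accepts the $(1+t)^{p_0}$ growth coming from the non-integrable $(1+s)^{-1}$ Coulomb rate, whereas you observe that since the potential $V=|x|^{-1}\ast|u|^2$ is real, the top-order term $\mathrm{Re}\langle iV\langle\nabla\rangle^N u,\langle\nabla\rangle^N u\rangle$ vanishes, leaving the commutator $[\langle\nabla\rangle^N,V]u$ which carries the integrable factor $\|\nabla V\|_{L^\infty}\lesssim\e_1^2(1+s)^{-2}$. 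Your variant actually yields the stronger, growth-free bound $\sup_t\|f(t)\|_{H^N}\lesssim\e_0+C\e_1^3$; both give the stated conclusion, but yours costs a Kato--Ponce commutator estimate (whose second term $\|\langle\nabla\rangle^N V\|_{L^2}\|u\|_{L^\infty}$ also needs checking), while the paper's crude bound is shorter and good enough once the $(1+t)^{p_0}$ slack is built into the norm. One small correction to your narrative: the ``doubly singular'' symbol $m\otimes m\,|\eta|^{-2}$ is bounded but not smooth at $\eta=0$ (it is homogeneous of degree $0$ there); this is harmless in the paper's treatment because the dyadic localization in $\eta$ supplies the needed derivative bounds for Lemma~\ref{lemprod}, but it should not be called ``smooth.''
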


\noindent
The proof of the above Proposition is contained in section \ref{secweighted}.

\begin{proposition}[Estimate of the $L^\infty_\xi$-norm]\label{proZ}
Assume that $f\in C([0,T]:H^{N})$ satisfies the a priori bounds \eqref{apriori}. Then 
\begin{align*}
\sup_{t \in[0,T] } {\big\| {(1+|\xi|)}^{10} \what{f}(t,\xi) \big\|}_{L^\infty_\xi} \leq \e_0 + C \e_1^3 \, .
\end{align*}
\end{proposition}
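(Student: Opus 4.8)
The plan is to control $\what{f}(t,\xi)$ pointwise in $\xi$ by running an ODE-type argument on $\partial_t \what{f}(t,\xi)$ along the lines sketched in section \ref{secideas}, with the weighted bounds from Proposition \ref{proW} supplying all the error estimates. First I would weight the problem by ${(1+|\xi|)}^{10}$ and observe that, since $\Lambda(\xi) \sim |\xi|$ for large $\xi$, the phase-space localizations in \eqref{inteq1} force the output frequency $\xi$ and the inputs $\xi+\eta$, $\xi+\s$, $\xi+\eta+\s$ to be comparable up to the scales of $\eta,\s$; after a Littlewood--Paley decomposition one reduces to the range $|\xi|\sim 1$ (the high-frequency part being handled by the many derivatives in $H^N$ and the $(1+|\xi|)^{10}$ weight, giving extra decay), so that the weight plays no essential role thereafter.

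Next I would split the $\eta$-integral in \eqref{inteq1} at a dyadic scale $|\eta|\sim s^{l_0}$ for a well-chosen small $l_0<0$ (the excerpt's asymptotic-analysis discussion suggests $l_0$ on the order of $-1/300$, matching the cutoff $\varphi(\xi s^{-1/300})$ in \eqref{maintheocorr}). On the region $|\eta|\gtrsim s^{l_0}$ I would integrate by parts in $\eta$: each integration by parts gains a factor $s^{-1}$ times $|\eta|^{-1}$ and hits either the cutoff or one of the profiles, producing either a $\partial\what f = xf$ factor (controlled in $L^2$ by Proposition \ref{proW} up to $s^{p_0}$ growth) or lower-order terms; iterating enough times and using $|\eta|^{-2}$ is only mildly singular, one gains a net $s^{-1-\delta}$ for some $\delta>0$, so this contribution is integrable in $s$ and contributes $O(\e_1^3)$ to the $L^\infty_\xi$ bound. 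This is the content of the ``remaining contributions'' referred to as section \ref{secrem}. On the region $|\eta|\les s^{l_0}$ I would Taylor-expand the phase $-\Lambda(\xi)+\Lambda(\xi+\eta)+\dots$ around $\eta=0$, keeping the linear term $s\,\eta\cdot z$ with $z = \s/\langle\s\rangle - \xi/\langle\xi\rangle$ and estimating the quadratic remainder (which costs $s|\eta|^2 \les s^{1+2l_0}$, small by the choice of $l_0$) using the weighted norms again; similarly I would replace $\what f(s,\xi+\eta)$ by $\what f(s,\xi)$ at the cost of $|\eta|\,\|xf\|$. What is left is exactly $\tfrac{i}{s}\what f(s,\xi) C(s,\xi)$ with $C$ real-valued (here one uses that $\F(|\eta|^{-2})(sz)$ is real and even), and $C(s,\xi)$ is, up to the localization, the integrand defining $\partial_t B(t,\xi)$ in \eqref{maintheocorr}.

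From here the argument is the standard normal-form-free ODE bootstrap: writing $\partial_t \big(e^{iB(t,\xi)}\what f(t,\xi)\big) = e^{iB}\big[\partial_t \what f - i(\partial_t B)\what f\big]$ and using that the real part $\tfrac{i}{s}C\what f$ is cancelled by $\partial_t B$, one is left with $\partial_t\big(e^{iB}\what f\big) = O(\e_1^3 s^{-1-\delta})$, which integrates to give both a uniform bound on ${\|{(1+|\xi|)}^{10}\what f(t)\|}_{L^\infty_\xi}$ of size $\e_0 + C\e_1^3$ and, by Cauchy's criterion, the existence of $f_+$ and the decay rate ${(1+t)}^{-p_1}$ in \eqref{maintheoscatt}. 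One subtlety is that $B(t,\xi)$ is only real if $C$ is, which requires tracking that all the error terms removed in the previous step are genuinely $o(s^{-1})$ in $L^\infty_\xi$ and do not secretly contribute a second real resonant term; I would also need to check that the cutoff $\varphi(\xi s^{-1/300})$ appearing in $B$ is consistent with the Littlewood--Paley reduction to $|\xi|\sim 1$ (for $|\xi|$ bounded away from $0$ and $\infty$ it is eventually $1$, and the region $|\xi| \les s^{-1/300}$ is negligible by a separate low-frequency estimate using $\|\what f\|_{L^\infty}$ and the size of the support).

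The main obstacle I expect is making the $|\eta|\gtrsim s^{l_0}$ remainder genuinely integrable: the potential $|\eta|^{-2}$ is borderline in three dimensions, and the factor $s$ from $\partial_t$-type terms (or from the first integration by parts landing on the oscillation) must be beaten by the decay of the cubic product together with the gain from integrating by parts, which is delicate precisely because $\gamma=1$ is scattering-critical. Getting $\delta>0$ here — rather than merely $\delta=0$ with a logarithmic loss — is where the precise interplay between $l_0$, the number of integrations by parts, the $x f$ and $x^2 f$ bounds (hence the need for the second, $2p_0$-weighted norm), and the dispersive decay \eqref{aprioridecay} all have to be balanced. Everything else is, in principle, routine given Propositions \ref{prodecay} and \ref{proW} and Lemma \ref{lemcorr}.
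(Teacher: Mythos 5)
Your plan follows the paper's argument in Section \ref{secLinfty} quite closely: define $g=e^{iB}\what{f}$, split the $\eta$-integral dyadically at a threshold $2^{l_0}$, extract the $O(s^{-1})$ resonant contribution from $|\eta|\lesssim 2^{l_0}$ (Lemma \ref{lemcorr}), handle high output frequencies separately (Lemma \ref{lemcorrhigh}), and kill the $|\eta|\gtrsim 2^{l_0}$ tail by repeated integration by parts in $\eta$ (Section \ref{secrem}). Two places, however, need repair before the estimates actually close.

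First, the $\eta$-cutoff scale: you guess $l_0\approx -1/300$ by matching the cutoff $\varphi(\xi s^{-1/300})$, but that cutoff lives on the \emph{output} variable $\xi$ --- it restricts $|\xi|\lesssim s^{1/300}$, the complementary range being absorbed by Lemma \ref{lemcorrhigh} --- and has nothing to do with the $\eta$-split. The paper takes $l_0=[-29m/40]+1$, so $|\eta|\lesssim s^{-29/40}$. With your $|\eta|\lesssim s^{-1/300}$ the Taylor remainder $s|\eta|^2\lesssim s^{1-2/300}$ blows up; you do state the right criterion (``small by the choice of $l_0$''), but you need $l_0<-1/2$, while the competing requirement that the $|\eta|\gtrsim 2^{l_0}$ remainder decay like $s^{-1-}$ keeps $l_0$ from being too negative, and $-29/40$ is where these two meet. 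Second, and more substantively, replacing $\what f(s,\xi+\eta)$ by $\what f(s,\xi)$ ``at the cost of $|\eta|\,\|xf\|$'' is not directly available: $\|xf\|_{L^2}$ does not control $\partial\what f$ in $L^\infty$, which is what a pointwise Taylor estimate would require. The paper instead splits $f=f_{\leq J}+f_{\geq J}$ in physical space, bounds $\|\what{f_{\geq J}}\|_{L^\infty}\lesssim 2^{-J/2}\|x^2f\|_{L^2}$ and $\|\partial\what{f_{\leq J}}\|_{L^\infty}\lesssim 2^{J/2}\|x^2f\|_{L^2}$, and optimizes $J=-l_0$; this produces only the weaker gain $2^{l_0/2}$, not $2^{l_0}$. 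It is precisely this step that forces the inclusion of the $x^2 f$ bound (and hence the second, $2p_0$-weighted norm) in the bootstrap, and it feeds back into the choice of $l_0$ above. With these two adjustments your outline matches the proof.
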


As a corollary of the proof of Proposition \ref{proZ} we will obtain the main result of our paper
concerning scattering of small solution of \eqref{eq1}:

\begin{proposition}[Modified Scattering]\label{proscatt}
Assume that $u \in C([0,T]:H^{N})$ is a solution of \eqref{eq1} with $u_0$ small enough as in \eqref{initdata}.
Define
\begin{align}
\label{correction0}
B(t,\xi) :=  {(2\pi)}^{-3} \int_0^t \int_{\R^3} {\left| \frac{\xi}{\langle \xi \rangle} - \frac{\s}{\langle \s\rangle} \right|}^{-1} 
  {|\what{u}(s,\s)|}^2 \, d\s \, \varphi (\xi s^{-1/300}) \frac{ds}{s+1} \, ,
\end{align}
where $\varphi$ is a smooth compactly supported function as the one described before \eqref{phi_k}.
Then, there exists $f_+ \in L^\infty_\xi$, and $p_1 >0$, such that 
\begin{align}
\label{estproscatt}
{\big\| {(1+|\xi|)}^{10} \big( e^{iB(t,\xi)} \what{f}(t,\xi) 
  - f_+(\xi) \big) \big\|}_{L^\infty_\xi} \les \e_1^3 {(1+t)}^{-p_1} \, .
\end{align}
\end{proposition}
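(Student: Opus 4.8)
\textbf{Proof proposal for Proposition \ref{proscatt}.}

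The plan is to read off the asymptotic correction directly from the Fourier-space analysis already used in the proof of Proposition \ref{proZ}, and then upgrade the resulting ODE-type identity into a convergence statement by a Cauchy-sequence argument in $L^\infty_\xi$. First I would recall from the asymptotic analysis (Lemma \ref{lemcorr} and section \ref{secrem}) the decomposition $\partial_t \what{f}(t,\xi) = i t^{-1} C(t,\xi) \what{f}(t,\xi) + \mathcal{R}(t,\xi)$, where the main term comes from the critical region $|\eta| \lesssim s^{l_0}$ with the cutoff $\varphi(\xi s^{-1/300})$ inserted, and $C(t,\xi)$ is the real-valued function whose time integral is exactly $B(t,\xi)$ as defined in \eqref{correction0}; the remainder $\mathcal{R}$ collects the contributions from $|\eta| \gtrsim s^{l_0}$, the errors from Taylor-expanding the phase, and the errors from freezing $\what{f}(s,\xi\pm\eta)$ at $\what{f}(s,\xi)$, all of which were shown in section \ref{secrem} (using the weighted bounds from Proposition \ref{proW} and the $\F^{-1}L^\infty$ bound from the a priori assumptions) to satisfy ${\|(1+|\xi|)^{10}\mathcal{R}(t,\xi)\|}_{L^\infty_\xi} \lesssim \e_1^3 (1+t)^{-1-p_1}$ for some $p_1 > 0$. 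Here one also uses that, on the support of the profile, $\what{u}(s,\s) = e^{-is\Lambda(\s)}\what{f}(s,\s)$ so that $|\what{u}(s,\s)| = |\what{f}(s,\s)|$, which is why $B$ can be written equivalently in terms of $\what u$ as in \eqref{correction0} or in terms of $\what f$.

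Next I would compute $\partial_t\big(e^{iB(t,\xi)}\what{f}(t,\xi)\big) = e^{iB(t,\xi)}\big(i\partial_t B(t,\xi)\,\what{f}(t,\xi) + \partial_t\what{f}(t,\xi)\big)$. Since $\partial_t B(t,\xi) = t^{-1}C(t,\xi) \cdot (\text{the }\varphi\text{ cutoff factor})/(1 + \cdots)$ matches the main term of $\partial_t\what{f}$ up to exactly the errors already absorbed into $\mathcal{R}$ (the discrepancy between $s^{-1}$ and $(s+1)^{-1}$, and between the true critical multiplier and $|\xi/\langle\xi\rangle - \s/\langle\s\rangle|^{-1}$, contributes only further $O(s^{-1-})$ terms), we get
\begin{align*}
\partial_t\big(e^{iB(t,\xi)}\what{f}(t,\xi)\big) = e^{iB(t,\xi)}\widetilde{\mathcal{R}}(t,\xi), \qquad {\big\|{(1+|\xi|)}^{10}\widetilde{\mathcal{R}}(t,\xi)\big\|}_{L^\infty_\xi} \lesssim \e_1^3 {(1+t)}^{-1-p_1}.
\end{align*}
Since $B(t,\xi)$ is real, $|e^{iB(t,\xi)}| = 1$, so for $t_2 > t_1$ we have ${\big\|{(1+|\xi|)}^{10}\big(e^{iB(t_2,\xi)}\what{f}(t_2,\xi) - e^{iB(t_1,\xi)}\what{f}(t_1,\xi)\big)\big\|}_{L^\infty_\xi} \leq \int_{t_1}^{t_2}{\big\|{(1+|\xi|)}^{10}\widetilde{\mathcal{R}}(s,\xi)\big\|}_{L^\infty_\xi}\,ds \lesssim \e_1^3 (1+t_1)^{-p_1}$. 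This shows that $t \mapsto {(1+|\xi|)}^{10}e^{iB(t,\xi)}\what{f}(t,\xi)$ is Cauchy in $L^\infty_\xi$ as $t\to\infty$; call the limit ${(1+|\xi|)}^{10}f_+(\xi)$, which lies in $L^\infty_\xi$. Letting $t_2\to\infty$ in the estimate above with $t_1 = t$ yields \eqref{estproscatt}.

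The main obstacle — already handled, strictly speaking, inside the proof of Proposition \ref{proZ}, so here it is a matter of careful bookkeeping rather than new work — is verifying that the integrable-in-time remainder bound $(1+t)^{-1-p_1}$ genuinely holds with a \emph{positive} $p_1$ after all the approximation errors are accounted for: one must check that the choice of the exponent $1/300$ in the cutoff $\varphi(\xi s^{-1/300})$ (equivalently the scale $s^{l_0}$) is simultaneously large enough that the tail $|\eta|\gtrsim s^{l_0}$ integrates to $O(s^{-1-p_1})$ after the integration by parts in $\eta$ described in section \ref{secrem}, and small enough that the phase Taylor expansion and the profile-freezing errors — each of which costs a power of $s^{l_0}$ against the gain from the weighted norms, which themselves are only controlled up to $(1+s)^{2p_0}$ growth — still beat $s^{-1}$. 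Balancing these constraints forces $p_1$ to be small (consistent with $0 < p_1 < 1/1000$), and the bookkeeping must confirm that $2p_0 = 1/500$ is small enough relative to the gains; once that is in hand the rest is the soft Cauchy-sequence argument above. I would also note that the same computation run backward in time gives the analogous statement for $t < 0$, and that combining \eqref{estproscatt} with Proposition \ref{prodecay} and the reconstruction $\what{u}(t,\xi) = e^{-it\Lambda(\xi)}\what{f}(t,\xi)$ yields precisely \eqref{maintheoscatt}.
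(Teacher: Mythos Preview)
Your proposal is correct and follows essentially the same route as the paper: define the modified profile $g(t,\xi) = e^{iB(t,\xi)}\what{f}(t,\xi)$, use the decomposition from Lemma \ref{lemcorr} and section \ref{secrem} to show that $\partial_t g$ is $O\big((1+t)^{-1-p_1}\big)$ in the weighted $L^\infty_\xi$ norm, and conclude by a Cauchy-sequence argument that $g(t)$ has a limit $f_+$ satisfying \eqref{estproscatt}. One small slip in your bookkeeping paragraph: the cutoff $\varphi(\xi s^{-1/300})$ restricts the \emph{output} frequency $|\xi|$ (its role is handled separately by Lemma \ref{lemcorrhigh}) and is not the same object as the scale $2^{l_0}\approx s^{-29/40}$ on $|\eta|$ that separates the critical region from the tail --- these are two independent parameters, not equivalent, though both must indeed be chosen compatibly with $p_0$.
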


\noindent
We refer to section \ref{secLinfty} for the proof of Propositions \ref{proZ} and \ref{proscatt}.
In particular, Proposition \ref{prohatf} implies both Propositions \ref{proZ} and \ref{proscatt}, through the estimate \eqref{keybound0}.

\vskip15pt
\section{Proof of Proposition \ref{proW}}\label{secweighted}

Let us define 
\begin{align}
\label{N}
\N(h_1, h_2, h_3) := \big( {|x|}^{-1} \ast h_1 \overline{h_2} \big) h_3 \, .
\end{align}
%
%
A priori energy estimates for the Sobolev norms of solutions to \eqref{eq1} are straightforward.
In particular, using the Hardy-Littlewood-Sobolev inequality, 
it is not hard to show that, given a solution $u: [0,T] \times \R^3$, for all $t \in [0,T]$ one has
\begin{align}
\label{ee}
 {\| u (t) \|}_{H^N} \leq {\| u_0 \|}_{H^N} + C \int_0^t {\| u(s) \|}_{L^2} {\| u(s) \|}_{L^6} {\| u (s) \|}_{H^N} \, ds
\end{align}
for all integers $N \geq 0$, and some constant $C>0$.
Interpolating the a priori decay assumption in \eqref{apriori} and the bounds on the $L^\infty_\xi$-norm (which controls the $L^2_x$-norm), 
one has  ${\| u(s) \|}_{L^6} \lesssim \e_1 {(1+s)}^{-1}$. 
Using the a priori assumptions \eqref{apriori} it follows from \eqref{ee} that 
\begin{align}
 {\| u (t) \|}_{H^N} \leq \e_0 + C \e_1^3 {(1+t)}^{p_0} \, .
\end{align}

\vskip5pt
To obtain Proposition \ref{proW}, we then aim to prove that under the a priori assumptions \eqref{apriori} one has
\begin{align}
\label{estW1}
 & {\| \langle x \rangle f(t) \|}_{H^2} \leq \e_0 + C \e_1^3 {(1+t)}^{p_0} \, ,
\\
\label{estW2}
 & {\| {\langle x \rangle}^2 f(t) \|}_{H^2}  \leq \e_0 + C \e_1^3 {(1+t)}^{2p_0} \, .
\end{align}

\vskip15pt
\subsection{Proof of \eqref{estW1}}\label{secweighted1}
Since we already have control on the $L^2$-norm of $f$ we just need to estimate $xf$ in $H^2$.
Recall the integral equation satisfied by $f$:
\begin{align}
 \label{inteqW1}
\begin{split}
\widehat{f}(t,\xi) & = \widehat{u_0}(\xi) +  \int_0^t I(s,\xi) \, ds \, ,
\\
I(s,\xi) & := i c_1 \iint_{\R^3 \times \R^3} e^{is \phi(\xi,\eta,\sigma)} {|\eta|}^{-2}
     \widehat{f}(s,\xi-\eta) \widehat{f}(s,\eta+\sigma) \overline{\widehat{f}}(s,\sigma)  \, d\eta d\sigma  \, ,
\\
& \phi(\xi,\eta,\sigma) := - \Lambda(\xi)  + \Lambda(\xi-\eta) + \Lambda(\eta+\sigma) - \Lambda(\sigma) \, . 
\end{split}
\end{align}
For \eqref{estW1} it suffices to prove that under the a priori assumptions \eqref{apriori} we have
\begin{align}
\label{concW1}
& {\Big\| \langle \xi \rangle \partial_\xi I(s)  \Big\|}_{L^2} \les \e_1^3 {(1+s)}^{-1+p_0} \, .
\end{align}
Applying $\partial_\xi$ to $I$ we have
\begin{align}
\label{xidxiI}
& \partial_\xi I(s,\xi) = ic_1 \big( I_1(s,\xi) + I_2(s,\xi) \big) \, ,
\\
\label{I_1}
I_1(s,\xi) & = \iint e^{is \phi(\xi,\eta,\s)} {|\eta|}^{-2} 
  \partial_\xi \what{f}(s,\xi-\eta) \widehat{f}(s,\eta+\s) \overline{\widehat{f}}(s,\s)  d\eta d\s  \, ,
\\
\label{I_2}
I_2(s,\xi) & = is \iint e^{is \phi(\xi,\eta,\s)}  m(\xi,\eta) {|\eta|}^{-2}
  \what{f}(s,\xi-\eta) \widehat{f}(s,\eta+\s) \overline{\widehat{f}}(s,\s) \, d\eta d\s  \, ,
\end{align}
where we have denoted 
\begin{align}
 \label{m}
m(\xi,\eta) := \partial_\xi \Big( -\Lambda(\xi) + \Lambda(\xi-\eta) \Big)
  = -\Lambda^\p(\xi) \frac{\xi}{|\xi|} + \Lambda^\p(\xi-\eta) \frac{\xi-\eta}{|\xi-\eta|} \, . 
\end{align}
We will crucially use the fact that $m(\xi,\eta) \sim \eta$, for small $|\eta|$.
In particular, this will allow us to cancel part of the singularity given by the transform of the Coulomb potential $|\eta|^{-2}$,
so to compensate for the growing factor $s$ present in $I_2$.

\vskip10pt
\subsubsection{Estimate of \eqref{I_1}}
This term can be directly estimated using the Hardy-Littlewood-Sobolev inequality and the a priori assumptions \eqref{apriori}:
\begin{align*}
{\| {\langle \xi \rangle}^2 I_1(s) \|}_{L^2} & \les {\| \N \big(u(s), u(s), e^{is\Lambda} xf(s) \big) \|}_{H^2}
  \les {\| x f(s) \|}_{H^2} {\| u(s) \|}_{H^2} {\| u(s) \|}_{L^6} \les \e_1^3 {(1+s)}^{-1+p_0} \, .
\end{align*}

\vskip10pt
\subsubsection{Estimate of \eqref{I_2}}
With the notation \eqref{phi^m_k}, we perform dyadic decomposition
in the variables $\xi,\eta$ and $\xi-\eta$, and write
\begin{align}
\label{I_2k}
\begin{split}
{\langle \xi \rangle}^2 I_2(s,\xi) & = \sum_{k,k_1,k_2 \in \Z} \langle \xi \rangle I_2^{k,k_1,k_2}(s,\xi)
\\
I_2^{k,k_1,k_2}(s,\xi) & := is \iint e^{is \phi(\xi,\eta,\s)} m_1(\xi,\eta)
  \what{f_{k_1}}(s,\xi-\eta)  \what{P_{k_2} |u|^2}(s,\eta) \, d\eta \, ,
\\
m_1(\xi,\eta) & := \langle \xi \rangle \Big( -\Lambda^\p(\xi) \frac{\xi}{|\xi|} + \Lambda^\p(\xi-\eta) \frac{\xi-\eta}{|\xi-\eta|}\Big)  {|\eta|}^{-2}
  \varphi_{k}(\xi) \varphi_{[k_2-2,k_2+2]}(\eta) \, . 
\end{split}
\end{align}
In what follows we shall always work under the assumption that the integral above is not zero,
and, in particular the sums are taken over those indexes $(k,k_1,k_2)$ satisfying 
either $|k - \max\{k_1,k_2\}| \leq 10$ or $|k_1-k_2| \leq 10$.


Using $|m_1(\xi,\eta)| \les 2^{-k_2}$, we see that
\begin{align}
 \label{I_2est0}
{\| I_2^{k,k_1,k_2}(s) \|}_{L^2} & \les 
  s \, 2^{3k/2} {\| f_{k_1}(s) \|}_{L^2} 2^{-k_2} {\| P_{k_2} |u(s)|^2 \|}_{L^2}  \, .
\end{align}
From the a priori assumptions \eqref{apriori} we know that
\begin{align}
\label{I_2est01}
& {\| f_{k_1}(s) \|}_{L^2} \les 2^{3k_1/2} 2^{-10 (k_1)_+} \e_1 \, .
\\
\label{I_2est02}
& {\| P_{k_2} |u(s)|^2 \|}_{L^2} \les \min \Big\{ 2^{3k_2/2}, 2^{-5k_2} \Big\} \e_1^2 \, .
\end{align}
Using \eqref{I_2est01} and \eqref{I_2est02} in \eqref{I_2est0}, we see that the sum over those indexes $k$
such that $2^k \leq {(1+s)}^{-2}$ can be easily dealt with:
\begin{align*}
\sum_{\substack{ k,k_,k_2 \in \Z \\ 2^k \leq {(1+s)}^{-2} } } {\| I_2^{k,k_1,k_2}(s) \|}_{L^2} 
  \les \e_1^3 {(1+s)}^{-2} \, .
\end{align*}

From the definition in \eqref{I_2k}  
one can verify that
$m_1$ satisfies the hypothesis of Lemma \ref{lemprod} with $A \les 2^{-k_2}$.
Applying \eqref{touse3} we obtain
\begin{align}
 \label{I_2sum}
\Big\| \sum_{ \substack{ k,k_,k_2 \in \Z \\ 2^k \geq {(1+s)}^{-2} } } \langle \xi \rangle I_2^{k,k_1,k_2}(s) \Big\|_{L^2} 
  & \les s \sum_{\substack{ k,k_,k_2 \in \Z \\ 2^k \geq {(1+s)}^{-2} } } 2^{k_+} 
  {\| P_{k_1} f(s) \|}_{L^2} 2^{-k_2} {\| P_{k_2} |u(s)|^2 \|}_{L^\infty} \, .
\end{align}
Using \eqref{aprioridecay} we see that
\begin{align}
\label{I_2est2}
{\| P_{k_2} |u(s)|^2 \|}_{L^\infty} \les \min \Big\{ 2^{3k_2}, {\big(1+2^{2k_2}\big)}^{-1} {(1+s)}^{-3} \Big\} \e_1^2 \, .
\end{align}
Therefore, in view of \eqref{I_2est01} and \eqref{I_2est2}, we can bound the sum in \eqref{I_2sum} for $2^{k_2} \leq {(1+s)}^{-1}$ by
\begin{align*}
\sum_{ \substack{ 2^k \geq {(1+s)}^{-2}  \\ 2^{k_2} \leq {(1+s)}^{-1}} } 2^{k_+} 
    2^{3k_1/2} 2^{-10 (k_1)_+} \,  2^{2k_2} \e_1^3 \les \e_1^3 {(1+s)}^{-2+p_0} \, .
\end{align*}
To estimate the sum in \eqref{I_2sum} for $2^{k_2} \geq {(1+s)}^{-1}$ we use again \eqref{I_2est01} and \eqref{I_2est2} to obtain
\begin{align*}
\sum_{ \substack{2^k \geq {(1+s)}^{-2} \\ 2^{k_2} \geq {(1+s)}^{-1}} } 2^{k_+} 
    2^{3k_1/2} 2^{-10 (k_1)_+}  \, 2^{-k_2} {\big(1+2^{2k_2}\big)}^{-1} {(1+s)}^{-3} \e_1^3 
    \les \e_1^3 {(1+s)}^{-2+p_0} \, .
\end{align*}
The last two inequalities imply the desired bound \eqref{concW1} for the term $I_2$, and give us \eqref{estW1}.

\vskip10pt
\subsection{Proof of \eqref{estW2}}\label{secweighted2}
In order to estimate ${\langle x \rangle}^2 f$ in $H^2$ we compute the contributions from $\partial_\xi^2 I$.
Applying $\partial_\xi$ to the terms $I_1$ and $I_2$ as they appear in \eqref{xidxiI}-\eqref{m}, 
and with a slight abuse of notation, we can write
\begin{align}
\label{xidxiI1}
& \partial_\xi \big( I_1(s,\xi) + I_2(s,\xi) \big)= J_1(s,\xi) + 2 J_2(s,\xi) + J_3(s,\xi) + J_4(s,\xi) \, ,
\\
\label{J_1}
J_1(s,\xi) & := \iint e^{is \phi(\xi,\eta,\s)} {|\eta|}^{-2} 
  \partial_\xi^2 \what{f}(s,\xi-\eta) \widehat{f}(s,\eta+\s) \overline{\widehat{f}}(s,\s)  d\eta d\s  \, ,
\\
\label{J_2}
J_2(s,\xi) & :=  is \iint e^{is \phi(\xi,\eta,\s)}  m(\xi,\eta)  {|\eta|}^{-2} 
  \partial_\xi \what{f}(s,\xi-\eta) \widehat{f}(s,\eta+\s) \overline{\widehat{f}}(s,\s) \, d\eta d\s  \, ,
\\
\label{J_3}
J_3(s,\xi) & := is \iint e^{is \phi(\xi,\eta,\s)} \partial_\xi m(\xi,\eta)  {|\eta|}^{-2} 
    \what{f}(s,\xi-\eta) \widehat{f}(s,\eta+\s) \overline{\widehat{f}}(s,\s) \, d\eta d\s  \, ,
\\
\label{J_4}
J_4(s,\xi) & := - s^2 \iint e^{is \phi(\xi,\eta,\s)} {[m(\xi,\eta)]}^2 {|\eta|}^{-2} 
  \what{f}(s,\xi-\eta) \widehat{f}(s,\eta+\s) \overline{\widehat{f}}(s,\s)  d\eta d\s  \, ,
\end{align}
where $m$ is defined in \eqref{m}.
To obtain \eqref{estW2} it is then enough to show that for $i=1,\dots,4$, one has
\begin{align}
\label{concW2}
& {\big\| {\langle \xi \rangle}^2 J_i(s)  \big\|}_{L^2} \les \e_1^3 {(1+s)}^{-1 + 2p_0} \, .
\end{align}

\vskip10pt
\subsubsection{Estimate of \eqref{J_1}}\label{secJ_1}
This is the easiest term and can be directly estimated as follows:
\begin{align*}
{\| {\langle \xi \rangle}^2 J_1(s) \|}_{L^2} & \les {\| \N \big(u(s), u(s), e^{is\Lambda} x^2 f(s) \big) \|}_{H^2}  
  \\
& \les {\| x^2 f(s) \|}_{H^2} {\| u(s) \|}_{H^2} {\| u(s) \|}_{L^6} \les \e_1^3 {(1+s)}^{-1 + 2p_0} \, .
\end{align*}

\vskip10pt
\subsubsection{Estimate of \eqref{J_2}}
Similarly to what has been done above for $I_2$ in \eqref{I_2k}, we can write
\begin{align}
{\langle \xi \rangle}^2 J_2(s,\xi) =  i s \sum_{k,k_1,k_2 \in \Z}  \langle \xi \rangle \iint e^{is \phi(\xi,\eta,\s)} m_2(\xi,\eta) 
  \what{P_{k_1} xf}(s,\xi-\eta) \what{P_{k_2} |u|^2}(s,\eta) \, d\eta \, ,
\end{align}
where 
\begin{align}
\label{m_2}
m_2(\xi,\eta) & := \langle \xi \rangle m_1(\xi,\eta)
  = \langle \xi \rangle \Big( -\Lambda^\p(\xi)\frac{\xi}{|\xi|} + \Lambda^\p(\xi-\eta)\frac{\xi-\eta}{|\xi-\eta|} \Big) {|\eta|}^{-2} 
  \varphi_{k}(\xi) \varphi_{[k_2-2,k_2+2]}(\eta) \, .
\end{align}
It is not hard to verify that $m_2$ satisfies the hypothesis of Lemma \ref{lemprod} with $A \les 2^{-k_2}$.
We can then apply \eqref{touse3} and obtain
\begin{align}
 \label{J_2sum}
{\| {\langle \xi \rangle}^2 J_2(s) \|}_{L^2} & \les 
  s \sum_{2^k \geq {(1+s)}^{-2} } 2^{k_+} {\| P_{k_1} x f(s) \|}_{L^2} 2^{-k_2} {\| P_{k_2} |u(s)|^2 \|}_{L^\infty}  
\\
 \label{J_2sum1}
& + s \sum_{2^k \leq {(1+s)}^{-2}} 2^{3k/2} {\| P_{k_1} x f(s) \|}_{L^2} \, 2^{-k_2} {\| P_{k_2} |u(s)|^2 \|}_{L^2} \, .
\end{align}

Using Bernstein's inequality and interpolating weighted norms we can estimate
\begin{align*}
{\| P_{k_1} x f(s) \|}_{L^2} 
& \les 2^{k_1/2} {\| x f(s) \|}_{L^{3/2}} \les 2^{k_1/2} {\| x f(s) \|}^{1/2}_{L^2} {\| x^2 f(s) \|}^{1/2}_{L^2}
  \\
& \les 2^{k_1/2} {(1+s)}^{3p_0/2} \e_1
\end{align*}
This and the a priori bounds \eqref{apriori} give us
\begin{align}
\label{J_2est1}
& {\| P_{k_1} x f(s) \|}_{L^2} \les \min \Big\{ {\big(1+2^{2k_1}\big)}^{-1} {(1+s)}^{p_0} , 2^{k_1/2} {(1+s)}^{3p_0/2} \Big\} \e_1
\\
\label{J_2est2}
& {\| P_{k_2} |u(s)|^2 \|}_{L^\infty} \les \min \Big\{ 2^{3k_2}, {\big(1+2^{2k_2}\big)}^{-1} {(1+s)}^{-3} \Big\} \e_1^2 \, .
\end{align}

Using \eqref{J_2est1} and \eqref{J_2est2}, we can estimate the sum in \eqref{J_2sum} for $2^{k_2} \leq {(1+s)}^{-1}$ as follows:
\begin{align}
\sum_{ \substack{ 2^k \geq {(1+s)}^{-2} \\ 2^{k_2} \leq {(1+s)}^{-1}} }
   2^{k_+}  \min \Big\{ {\big(1+2^{2k_1}\big)}^{-1} {(1+s)}^{p_0} , 2^{k_1/2} {(1+s)}^{3p_0/2} \Big\} 
   2^{2k_2} \e_1^3 
   \les \e_1^3 {(1+s)}^{-2+2p_0} \, .
\end{align}
Using again \eqref{J_2est1}  and \eqref{J_2est2} we see that
\begin{align*}
\sum_{ \substack{ 2^k \geq {(1+s)}^{-2} \\ 2^{k_2} \geq {(1+s)}^{-1}} }
   2^{k_+}  \min \Big\{ {\big(1+2^{2k_1}\big)}^{-1} {(1+s)}^{p_0} , 2^{k_1/2} {(1+s)}^{3p_0/2} \Big\} 2^{-k_2} {(1+s)}^{-3} \e_1^3 
   \les \e_1^3 {(1+s)}^{-2+2p_0} \, .
\end{align*}
Eventually, we estimate similarly the sum in \eqref{J_2sum1}:
\begin{align*}
\sum_{ 2^k \leq {(1+s)}^{-2} }
   2^{3k/2}  \min \Big\{ {\big(1+2^{2k_1}\big)}^{-1} {(1+s)}^{p_0} , 2^{k_1/2} {(1+s)}^{3p_0/2} \Big\} 
   \min \{ 2^{-k_2} , 2^{2k_2} \} \e_1^3 
   \les \e_1^3 {(1+s)}^{-2} \, .
\end{align*}
The last three inequalities give us the desired bound \eqref{concW2} for the term $J_2$.

\vskip10pt
\subsubsection{Estimate of \eqref{J_3}}
We write
\begin{align}
\label{J_3sum}
{\langle \xi \rangle}^2 J_3(s,\xi) =  i s \sum_{k,k_1,k_2 \in \Z} \langle \xi \rangle \iint e^{is \phi(\xi,\eta,\s)} m_3(\xi,\eta) 
  \what{f_{k_1}}(s,\xi-\eta) \what{P_{k_2} |u|^2}(s,\eta) \, d\eta \, ,
\end{align}
where 
\begin{align*}
m_3(\xi,\eta) 
  = \langle \xi \rangle \partial_\xi
  \Big( -\Lambda^\p(\xi)\frac{\xi}{|\xi|} + \Lambda^\p(\xi-\eta)\frac{\xi-\eta}{|\xi-\eta|} \Big) {|\eta|}^{-2} 
  \varphi_{k}(\xi) \varphi_{[k_2-2,k_2+2]}(\eta) \, .
\end{align*}
Proceeding analogously to the estimates in the above two paragraphs, 
we can easily bound by $\e_1^3{(1+s)}^{-2}$
the summation in \eqref{J_3sum} over $k \in \Z$ such that $2^k \leq {(1+s)}^{-2}$.
For the remaining contribution we apply once again Lemma \ref{lemprod} to the symbol $m_3$, with $A \les 2^{-k_2}$,
and use the a priori bounds to deduce
\begin{align*}
& {\| {\langle \xi \rangle}^2 J_3(s) \|}_{L^2} 
\\
& \les 
  s \sum_{2^k \geq {(1+s)}^{-2}} 2^{k_+} {\| P_{k_1} f(s) \|}_{L^2} 2^{-k_2} {\| P_{k_2} |u(s)|^2 \|}_{L^\infty} 
  +  \e_1^3{(1+s)}^{-2}
\\
& \les s \sum_{2^k \geq {(1+s)}^{-2}} 2^{k_+}  2^{3k_1/2} 2^{-10 (k_1)_+} 2^{-k_2} 
  \min \big\{ 2^{3k_2}, \big(1+2^{2k_2}\big)^{-1} {(1+s)}^{-3} \big\} \e_1^3 
  +  \e_1^3{(1+s)}^{-2}
\\
& \les \e_1^3 {(1+s)}^{-1+p_0} \, .
\end{align*}
The last inequality has been deduced once again by separately analyzing the two cases 
$2^{k_2} \leq {(1+s)}^{-1}$ and $2^{k_2} \geq {(1+s)}^{-1}$.

\vskip10pt
\subsubsection{Estimate of \eqref{J_4}}\label{secJ_4}
We can write
\begin{align*}
{\langle \xi \rangle}^2 J_4(s,\xi) =  - s^2 \sum_{k,k_1,k_2 \in \Z} \langle \xi \rangle \iint e^{is \phi(\xi,\eta,\s)} m_3(\xi,\eta) 
  \what{f_{k_1}}(s,\xi-\eta) \what{P_{k_2} |u|^2}(s,\eta) \, d\eta \, ,
\end{align*}
where 
\begin{align*}
m_4(\xi,\eta) := {\Big( -\Lambda^\p(\xi)\frac{\xi}{|\xi|} + \Lambda^\p(\xi-\eta)\frac{\xi-\eta}{|\xi-\eta|} \Big)}^2 
  {|\eta|}^{-2} \varphi_{k}(\xi) \varphi_{[k_2-2,k_2+2]}(\eta) \, .
\end{align*}
For $k$ with $2^k \leq {(1+s)}^{-2}$ a bound of $\e_1^3{(1+s)}^{-1}$ can be obtained as before.
To estimate the remaining contribution we notice that we can apply Lemma \ref{lemprod} to $m_4$ with $A \les 1$,
and obtain
\begin{align*}
& {\| {\langle \xi \rangle}^2 J_4(s) \|}_{L^2} 
\\
& \les 
  s^2 \sum_{2^k \geq {(1+s)}^{-2}} 2^{k_+} {\| P_{k_1} f(s) \|}_{L^2} {\| P_{k_2} |u(s)|^2 \|}_{L^\infty}
  + \e_1^3{(1+s)}^{-1}
\\
& \les s^2 \sum_{2^k \geq {(1+s)}^{-2}} 2^{k_+}   2^{3k_1/2} 2^{-10 (k_1)_+}
  \min \big\{ 2^{3k_2}, \big(1+2^{2k_2}\big)^{-1} {(1+s)}^{-3} \big\} \e_1^3 
  + \e_1^3 {(1+s)}^{-1}
\\
& \les \e_1^3 {(1+s)}^{-1+p_0} \, .
\end{align*}
This concludes the proof of \eqref{concW2}, which together with \eqref{concW1} gives us Proposition \ref{proW}.

\vskip15pt
\section{Proof of Proposition \ref{proZ} and \ref{proscatt}}
\label{secLinfty}

The aim of this section is to control uniformly in time the key norm 
\begin{align*}
{\| {(1+|\xi|)}^{10} \widehat{f}(t) \|}_\infty
\end{align*}
and show, as a byproduct of the proof,  
modified scattering as stated in \eqref{estproscatt} in Proposition \ref{proscatt}.

Given a solution $u$ of \eqref{eq1}, satisfying the a priori bounds \eqref{apriori}, we define for any $t\in[0,T]$ and $\xi \in \R^3$
\begin{align}
\label{correction}
B(t,\xi) :=  c_0 \int_0^t \int_{\R^3} {\left| \frac{\xi}{\langle \xi \rangle} - \frac{\s}{\langle \s\rangle} \right|}^{-1} 
  {|\what{u}(s,\s)|}^2 \, d\s \, \varphi_s (\xi) \frac{ds}{s+1} \, , \quad c_0 := {(2\pi)}^{-3} \, ,
\end{align}
where 
\begin{align}
\label{phi_s}
 \varphi_s (\xi) := \varphi (\xi s^{-1/300}) \, ,
\end{align}
for a smooth compactly supported function $\varphi$ as the one described before \eqref{phi_k}.
We also define the modified profile
\begin{align}
\label{modprof}
\qquad g(\xi,t) := e^{iB(t,\xi)} \what{f}(t,\xi) \, .
\end{align}
Notice that $B$ is a well-defined and real-valued function. In particular $\what{f}$ and $g$ have the same $L^\infty_\xi$-norm.

We are going to prove the following:
\begin{proposition}\label{prohatf}
Assume that $f\in C([0,T]:H^N)$ satisfies the a priori bounds \eqref{apriori}, that is
\begin{align}
\label{apriori10}
\begin{split}
\sup_{t\in[0,T]} \Big[ {(1+t)}^{-p_0} {\| u(t)\|}_{H^N} + 
  {(1+t)}^{-p_0} {\left\| x f(t) \right\|}_{H^1} + {(1+t)}^{-2 p_0} {\left\| x^2 f(t) \right\|}_{H^2}
\\
  + \,\, {\big\| {(1+|\xi|)}^{10} \what{f}(t) \big\|}_{L^\infty}  \Big] \leq \e_1 \, .
\end{split}
\end{align}
Then, for some $p_1>0$,
\begin{align}
\label{keybound0}
\sup_{t_1 \leq t_2 \in[0,T]} {(1+t_1)}^{p_1} 
  {\big\| {(1+|\xi|)}^{10} \big( g(t_2,\xi) - g(t_1,\xi) \big) \big\|}_{L^\infty_\xi} \les \e_1^3 \, .
\end{align}
In particular
\begin{align}
\label{estproZ1}
\sup_{t \in[0,T] } {\big\| {(1+|\xi|)}^{10} \what{f}(t,\xi) \big\|}_{L^\infty_\xi} \leq \e_0 + C \e_1^3 \, .
\end{align}
\end{proposition}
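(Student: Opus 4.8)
The plan is to establish the Cauchy-type estimate \eqref{keybound0} by differentiating the modified profile $g$ in time and showing that the resulting expression is integrable in $s$ with a margin. Writing $\partial_t g(t,\xi) = e^{iB(t,\xi)}\big[\partial_t \what f(t,\xi) + i(\partial_t B)(t,\xi)\what f(t,\xi)\big]$, I would substitute $\partial_t\what f = I(t,\xi)$ from \eqref{inteq1}, and split $I$ according to the dyadic size of the internal frequency $\eta$ into a ``critical'' region $|\eta| \lesssim s^{l_0}$ (for a suitable small $l_0$, tied to the cutoff $\varphi_s$ in \eqref{phi_s}, so $l_0 = -1/300$ or thereabouts) and a ``non-resonant'' region $|\eta| \gtrsim s^{l_0}$. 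In the non-resonant region, the phase $\phi$ has $|\nabla_\eta\phi| \gtrsim |\eta|$ bounded below, so repeated integration by parts in $\eta$ — using the a priori control on $xf$ and $x^2 f$ in $L^2$ to absorb the derivatives hitting $\what f$, at the cost of small powers $s^{p_0}, s^{2p_0}$ — yields a contribution of size $O(s^{-1-\delta})$ for some $\delta>0$; this is the content of the ``remaining contributions'' estimated in section \ref{secrem}, which I would invoke.

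In the critical region $|\eta|\lesssim s^{l_0}$, the plan is to extract the leading-order term exactly as sketched in section \ref{secideas}: Taylor-expand the phase $e^{is\phi}$ around $\eta = 0$ (the quadratic and higher corrections cost powers of $s^{2l_0}\cdot s$, which is a gain since $l_0<-1/2$ away from the endpoint, or is handled by the explicit choice of $l_0$), replace $\what f(s,\xi+\eta)$ and the other shifted profiles by their values at $\eta=0$ using the weighted bounds $\|\nabla_\xi\what f\|_{L^2}\lesssim s^{p_0}$, and then recognize the remaining $\eta$-integral as $\what{(|\eta|^{-2})}(sz) = c|sz|^{-1}$ with $z = \xi/\langle\xi\rangle - \s/\langle\s\rangle$. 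This produces precisely $i s^{-1}\what f(s,\xi)C(s,\xi)$ where, after accounting for all constants, $C(s,\xi) = c_0 \int |\xi/\langle\xi\rangle - \s/\langle\s\rangle|^{-1}|\what u(s,\s)|^2\, d\s \cdot \varphi_s(\xi)$ — this is exactly $(\partial_t B)(t,\xi)$ from \eqref{correction}. This is the step where Lemma \ref{lemcorr} does the work, and it is the step I would rely on being already available. The upshot is the pointwise-in-$\xi$ identity
\begin{align}
\label{keyidentity}
\partial_t g(t,\xi) = e^{iB(t,\xi)}\, \big[\, I(t,\xi) + i (\partial_t B)(t,\xi)\what f(t,\xi)\,\big] = E(t,\xi),
\end{align}
where the error term $E$ satisfies $\|{(1+|\xi|)}^{10} E(t,\xi)\|_{L^\infty_\xi} \lesssim \e_1^3 {(1+t)}^{-1-p_1}$ for some $p_1>0$ (with $p_1$ determined by the interplay of $l_0$, the weighted-norm growth rates $p_0, 2p_0$, and the decay budget; one checks $p_1 < 1/1000$ suffices).

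Given \eqref{keyidentity} and the error bound, \eqref{keybound0} follows by the fundamental theorem of calculus: $\|{(1+|\xi|)}^{10}(g(t_2,\xi)-g(t_1,\xi))\|_{L^\infty_\xi} \le \int_{t_1}^{t_2}\|{(1+|\xi|)}^{10}E(s,\xi)\|_{L^\infty_\xi}\,ds \lesssim \e_1^3 (1+t_1)^{-p_1}$. Then \eqref{estproZ1} follows by taking $t_1 = 0$, using that $g(0,\xi) = \what{u_0}(\xi)$ (since $B(0,\cdot)=0$), the initial bound ${\|{(1+|\xi|)}^{10}\what{u_0}\|}_{L^\infty}\le \e_0$ from \eqref{initdata}, and the fact that $|g| = |\what f|$ pointwise because $B$ is real-valued. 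The main obstacle, and the technical heart of the argument, is making the critical-region asymptotic expansion rigorous while keeping \emph{all} error terms — those from Taylor-expanding the phase, from freezing the profiles at $\eta=0$, from the cutoff mismatch between $\varphi_s$ and the sharp region $|\eta|\lesssim s^{l_0}$, and from the transition/non-resonant region — uniformly $O(s^{-1-p_1})$ in the weighted $L^\infty_\xi$ norm; this is precisely where the choice of the exponent $1/300$ in \eqref{phi_s} and the room in the weighted estimates $(1+s)^{p_0},(1+s)^{2p_0}$ must be balanced, and it is delegated to Lemma \ref{lemcorr} and section \ref{secrem}.
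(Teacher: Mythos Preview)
Your plan is exactly the paper's: split $I(s,\xi)$ by the size of $|\eta|$, extract the correction from the small-$\eta$ piece (Lemma~\ref{lemcorr}), bound the large-$\eta$ piece by integration by parts in $\eta$ (section~\ref{secrem}), and integrate in time. Since you explicitly delegate the hard work to those results, the argument goes through. Two specific claims in your writeup are wrong, however, and would derail you if you tried to fill in details yourself rather than invoke the lemmas.

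First, the $\eta$-cutoff scale is not $s^{-1/300}$. The exponent $1/300$ in $\varphi_s(\xi)=\varphi(\xi s^{-1/300})$ is a cutoff on the \emph{output frequency} $\xi$, not on $\eta$: it separates the regime $|\xi|\lesssim s^{1/300}$ (where Lemma~\ref{lemcorr} extracts the correction) from $|\xi|\gtrsim s^{1/300}$ (where a separate argument, Lemma~\ref{lemcorrhigh}, shows $I_0$ is already $O(s^{-3/2})$ using the $H^N$ norm --- this high-$\xi$ case is absent from your sketch). The actual $\eta$-cutoff is $2^{l_0}\sim s^{-29/40}$, see \eqref{l_0}; this value is what makes both the phase-Taylor error $s|\eta|^2\lesssim s^{-9/20}$ and the profile-freezing error $2^{l_0/2}$ acceptable, while still leaving room for the integration-by-parts gains in section~\ref{secrem}. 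With $l_0=-1/300$ the Taylor step fails outright; you note parenthetically that one needs $l_0<-1/2$, but the link you draw to $\varphi_s$ is spurious. Second, the assertion $|\nabla_\eta\phi|\gtrsim|\eta|$ is false. With the phase as in \eqref{inteq10}, $\nabla_\eta\phi=\nabla\Lambda(\xi+\eta)-\nabla\Lambda(\xi+\eta+\sigma)$ is governed by $\sigma$ and by the sizes $2^{k_1}\sim|\xi+\eta|$, $2^{k_2}\sim|\xi+\eta+\sigma|$, not by $\eta$. This is why section~\ref{secrem} must also localize in $|\sigma|\sim 2^{l_2}$ and split into the cases $|k_1-k_2|\geq 10$ versus $|k_1-k_2|\leq 10$ (Lemmas~\ref{lemC1},~\ref{lemC2}), with genuinely different lower bounds on $|\nabla_\eta\phi|$ in each.
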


It is clear that the above statement implies Propositions \ref{proZ} and \ref{proscatt}:
\eqref{estproZ1} gives Proposition \ref{proZ}, while \eqref{keybound0} implies \eqref{estproscatt} 
once we define 
\begin{align*}
f_+(\xi) := \lim_{t \rightarrow \infty} g(t,\xi) \, ,
\end{align*}
where the limit is taken in ${(1+|\xi|)}^{-10} L^\infty_{\xi}$.

For \eqref{keybound0} 
it suffices to prove that if $\, t_1\leq t_2\in[2^m-2,2^{m+1}]\cap[0,T]$, for some $m\in\{1,2\ldots\}$, then
\begin{equation}
\label{keybound}
{\big\| {(1+|\xi|)}^{10} ( g(t_2,\xi) - g(t_1,\xi) ) \big\|}_{L^\infty_\xi} \les \e_1^3 2^{-p_1m} \, .
\end{equation}
To prove this we start by looking at the nonlinear term $I$ in \eqref{inteq}, and after a change of variables we write it as
\begin{align}
\label{inteq10}
\begin{split}
I(s,\xi) & := i c_1 \iint_{\R^3 \times \R^3} e^{is \phi(\xi,\eta,\sigma)} {|\eta|}^{-2}
     \overline{\widehat{f}}(s,\xi+\eta+\sigma) \widehat{f}(s,\xi+\eta) \widehat{f}(s,\xi+\sigma) \, d\eta d\sigma 
\\
& \phi(\xi,\eta,\sigma) := - \Lambda(\xi)  + \Lambda(\eta+\xi) + \Lambda(\sigma+\xi) - \Lambda(\xi+\eta+\sigma) \, .
\end{split}
\end{align}
Let $l_0$ be the smallest integer larger than $-29m/40$,
\begin{align}
\label{l_0}
l_0 := \Big[ - \frac{29}{40} m \Big] + 1 \, ,
\end{align}
using the notation \eqref{phi^m_k}, we write
\begin{align}
\label{I}
\begin{split}
& I(s,\xi) = I_0 (s,\xi) + i c_1 \sum_{l_1 \in \Z \, , \, l_1 > l_0} I_{l_1}(s,\xi) \, ,
\\
& I_0 (s,\xi) := i c_1  \iint_{\R^3 \times \R^3} 
  e^{is \phi(\xi,\eta,\sigma)} {|\eta|}^{-2} \varphi_{l_0}^{(l_0)}(\eta)
  \widehat{f}(s,\xi+\eta) \widehat{f}(s,\xi+\sigma) \overline{\widehat{f}}(s,\xi+\eta+\sigma) \, d\eta d\sigma  \, ,
\\
& I_{l_1}(s,\xi) := \iint_{\R^3 \times \R^3} 
  e^{is \phi(\xi,\eta,\s)} {|\eta|}^{-2} \varphi_{l_1}^{(l_0)}(\eta)
  \what{f}(s,\xi+\eta) \what{f}(s,\xi+\s) \overline{\what{f}}(s,\xi+\eta+\s) \, d\eta d\s  \, .
\end{split}
\end{align}
The term $I_0$ is the one responsible for the correction to the scattering,
whereas $I-I_0$ is a remainder term, under the a priori assumptions \eqref{apriori10}.

The profile $f$ verifies
\begin{align}
 \partial_t f(t,\xi) = I_0(s,\xi) + \sum_{l_1 > l_0} I_{l_1}(s,\xi)
\end{align}
and, according to the definition of $g$ above, we have
\begin{align}
 \partial_t g(t,\xi) = e^{iB(t,\xi)} \left[ I_0(t,\xi) + \sum_{l_1>l_0} I_{l_1}(s,\xi) - 
    i \partial_t B(t,\xi) \, \widehat{f}(t,\xi) \right] \, .
\end{align}
Therefore, to prove \eqref{keybound} it suffices to show that if $k \in \Z$, $m\in\{1,2,\ldots\}$, $|\xi|\in[2^k,2^{k+1}]$, 
and $t_1\leq t_2\in[2^m-1,2^{m+1}]\cap[0,T]$ the following two bounds are true:
\begin{align}
\label{keybound01}
& \left| \int_{t_1}^{t_2} e^{iB(s,\xi)}  \left[ I_0 (s,\xi) -
  i \partial_s  B(s,\xi) \, \what{f}(s,\xi) \right] \,ds \right| \les \e_1^3 2^{-p_1 m} 2^{-10k_+} \, ,
\\
\label{keybound02}
& \left| \int_{t_1}^{t_2} e^{iB(s,\xi)}  \sum_{l_1 > l_0} I_{l_1} (s,\xi)\,ds \right| \lesssim \e_1^3 2^{-p_1 m} 2^{-10k_+} \, .
\end{align}
From the definition of $B(s,\xi)$ in \eqref{correction}, we see that \eqref{keybound01}
can be reduced to the following two bounds
\begin{align}
\label{keybound1}
\begin{split}
& \Big|  I_0 (s,\xi) \big(1 - \varphi_s(\xi))  \Big| \les \e_1^3 2^{-(1+p_1)m} 2^{-10k_+} \, ,
\\
& \Big|  I_0 (s,\xi)\varphi_s(\xi) - i c_0 \int_{\R^3} {\left| \frac{\xi}{\langle \xi \rangle} - \frac{\s}{\langle \s\rangle} \right|}^{-1} 
  {|\what{f}(s,\s)|}^2 \, d\s \frac{\varphi_s (\xi) \what{f}(s,\xi)}{s+1}  \Big|
  \les \e_1^3 2^{-(1+p_1)m} 2^{-10k_+} \, .
\end{split}
\end{align}
For \eqref{keybound02} instead, it suffices to show
\begin{align}
\label{keybound2}
& \sum_{l_1 > l_0} \left|  I_{l_1,l_2} (s,\xi)\,ds \right| \les \e_1^3 2^{-(1+p_1) m} 2^{-10k_+} \, .
\end{align}
The estimates \eqref{keybound1} are proven in the next section, 
while \eqref{keybound2} is proven in section \ref{secrem}.

\vskip10pt
\subsection{Proof of \eqref{keybound1}}\label{seccorr}

In view of the definition of $\varphi_s$ in \eqref{phi_s} , the first estimate in \eqref{keybound1} 
is a consequence of the following Lemma:

\begin{lem}[High frequency output]\label{lemcorrhigh}
Assume that \eqref{apriori} holds. Then, for all $m \in \{0,1,\dots\}$, $k\in \Z \cap  [m/300,\infty)$, 
$s \in [2^m-1,2^m]$ and $|\xi| \in [2^k,2^{k+1}]$
\begin{align}
& \big|  I_0 (s,\xi) \big| \lesssim \e_1^3 2^{-3m/2} 2^{-10k} \, .
\end{align}
\end{lem}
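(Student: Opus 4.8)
The statement asserts that in the regime of high output frequency $|\xi| \sim 2^k$ with $k \geq m/300$, the term $I_0(s,\xi)$ — which for small $\xi$ generates the phase correction — is itself a negligible remainder. The mechanism is that such large output frequencies are incompatible with the frequency localization $|\eta| \lesssim 2^{l_0} \sim 2^{-29m/40}$ imposed in the definition of $I_0$, so we should gain both from the fast decay of $\what{f}$ in the frequency variables (the $(1+|\xi|)^{10}$ weight) \emph{and} from integration by parts in $\eta$ using the non-stationarity of the phase. My plan is to first record the pointwise bounds available from \eqref{apriori10}: $|\what{f}(s,\zeta)| \lesssim \e_1 (1+|\zeta|)^{-10}$, together with $L^2$-control of $\what f$ and its first two derivatives (weighted norms), allowing slow growth $2^{2p_0 m}$. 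Since the $\eta$-integration runs only over $|\eta| \lesssim 2^{l_0}$, one of the three factors, namely $\what f(s,\xi+\eta)$ or $\what f(s,\xi+\eta+\s)$, is evaluated near $\xi$ and hence carries a factor $2^{-10k}$ (or better), while the third is $\what f(s,\xi+\s)$ with $\s$ unconstrained.

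The core of the argument is integration by parts in $\eta$. We have $\nabla_\eta \phi(\xi,\eta,\s) = \Lambda'(\xi+\eta)\frac{\xi+\eta}{|\xi+\eta|} - \Lambda'(\xi+\eta+\s)\frac{\xi+\eta+\s}{|\xi+\eta+\s|}$, which equals $\frac{\xi+\eta}{\langle \xi+\eta\rangle} - \frac{\xi+\eta+\s}{\langle \xi+\eta+\s\rangle}$; this is comparable in size to $\frac{\s}{\langle \xi+\eta+\s\rangle \langle \xi+\eta \rangle}$ up to constants, and in particular for $|\s|$ not too small relative to $\langle \xi \rangle$ it is bounded below. One should split into the region where $|\nabla_\eta\phi|$ is small — which forces $|\s| \lesssim$ (something small), a set of small measure in $\s$, and there estimate crudely using $|\eta|^{-2}$ integrability over $|\eta|\lesssim 2^{l_0}$, which yields a gain of $2^{l_0}$, i.e. $\sim 2^{-29m/40}$, more than enough against the $2^{-3m/2}$ target once combined with the $2^{-10k}$ from the frequency weight and crude bounds on the remaining factors — and the region where $|\nabla_\eta\phi| \gtrsim \langle\xi\rangle^{-2}$ or so, where we integrate by parts. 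Each integration by parts produces a factor $\frac{1}{s|\nabla_\eta\phi|}$ and derivatives landing on $|\eta|^{-2}\varphi^{(l_0)}_{l_0}(\eta)$ (costing $2^{-l_0}$ each, i.e. a \emph{loss} of $2^{29m/40}$) and on the profiles (costing weighted-norm factors with slow growth $2^{2p_0m}$). Since $s \sim 2^m$, two or three integrations by parts give net smallness: each step contributes $2^{-m}\cdot 2^{29m/40}\cdot 2^{Cp_0 m} = 2^{-11m/40 + Cp_0 m}$, and iterating enough times (say $8$ times) beats $2^{-3m/2}$; one must also track the $\s$-integration, using $L^1_\s$ of $|\what f(s,\xi+\s)|$ times $L^\infty$ of the others, or an $L^2_\s \times L^2_\s$ split for the two factors depending on $\s$.

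The main obstacle, as usual in these space–time resonance arguments, is the region where $\nabla_\eta\phi$ \emph{degenerates} — here when $|\s|$ is comparable to or smaller than the scale at which $\nabla_\eta\phi$ stops being elliptic — intersected with the small-$\eta$ support. One must check that the measure of the bad $\s$-set, combined with the $2^{-l_0}$-worth of room from $\int_{|\eta|\lesssim 2^{l_0}}|\eta|^{-2}\,d\eta \sim 2^{l_0}$ and the decisive gain $2^{-10k} \leq 2^{-m/30}$ coming from $k \geq m/300$ and the frequency weight on the factor localized near $\xi$, still lands below $2^{-3m/2}2^{-10k}$. A clean way to organize this is to further dyadically decompose in $|\s| \sim 2^{k_3}$ and in $|\nabla_\eta \phi| \sim 2^j$, integrating by parts $\sim (3m/40)/|j|$-many times in the region of size $2^j$ (more times when $2^j$ is larger), and summing the geometric series in $j$; the endpoint $2^j$ as small as $\sim 2^{-l_0}\cdot 2^{-m}$ is handled by the trivial bound since beyond that IBP gives nothing. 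I expect the bookkeeping of these two decompositions against the three "resources" ($2^{l_0}$ from $\eta$-measure, $2^{-10k}$ from the weight, $2^{-m}$ per IBP) to be the only real content; everything else is the routine application of Hölder, Bernstein, and the a priori bounds \eqref{apriori10}.
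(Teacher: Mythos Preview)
Your plan overlooks the mechanism that actually drives this lemma, and the integration-by-parts scheme you outline has a genuine gap.

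The paper's proof uses \emph{no} integration by parts at all. The point of the high-frequency regime $k \geq m/300$ is that the very large Sobolev index $N=1000$ can be cashed in: since $|\eta| \lesssim 2^{l_0} \ll 1 \leq 2^k$, the argument $\xi+\eta$ stays at frequency $\sim 2^k$, so after a further dyadic split $|\eta|\sim 2^{l_1}$ one has the crude bound
\[
|I_0^{l_1}(s,\xi)| \lesssim 2^{-2l_1}\cdot 2^{3l_1/2}\,\|P_{\sim k} f\|_{L^2}\,\|f\|_{L^2}^2
\lesssim 2^{-l_1/2}\,2^{-Nk}\,\|f\|_{H^N}^3 \, .
\]
With $N=1000$ and $k\geq m/300$ this gives $2^{-(N-10)k}\leq 2^{-990m/300}\sim 2^{-3.3m}$, which (summed over the $O(m)$ values of $l_1 \geq -2m$ and combined with the trivial bound $|I_0^{l_1}|\lesssim 2^{l_1}2^{-10k}\e_1^3$ for $l_1\leq -2m$) yields $\e_1^3 2^{-3m/2}2^{-10k}$ immediately. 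You never invoke the $H^N$ norm, and that is precisely the resource the lemma is designed to exploit.

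As for your proposed route: the cost accounting ``each IBP contributes $2^{-m}\cdot 2^{29m/40}$'' silently assumes $|\nabla_\eta\phi|^{-1}$ is $O(1)$, but in fact $|\nabla_\eta\phi| = |\tfrac{\xi+\eta}{\langle\xi+\eta\rangle} - \tfrac{\xi+\eta+\s}{\langle\xi+\eta+\s\rangle}|$ is generically of size $\langle\xi\rangle^{-1}$ (and as small as $\langle\xi\rangle^{-3}$ in the radial direction) when $|\s|\sim 1$. So each step really costs $2^{-m+29m/40}\cdot 2^{ck}$ for some $c\in[1,3]$, and for $k$ beyond roughly $11m/120$ the iteration \emph{loses}. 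Since the lemma covers all $k\geq m/300$ with no upper bound, your scheme does not close without an additional ingredient---and that ingredient is exactly the $H^{1000}$ control you omitted. Moreover, ``8 integrations by parts'' is incompatible with having only $x^2 f$ in $L^2$; at most four $\eta$-derivatives can land on profiles, and the remaining ones hitting $|\eta|^{-2}$ produce non-integrable singularities unless you first decompose dyadically in $|\eta|$, which you do not discuss.
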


\begin{proof}
We decompose
\begin{align*}
\begin{split}
& I_0(s,\xi) = i c_1 \sum_{l_1 \leq l_0 + 10} I_0^{l_1}(s,\xi)
\\
& I_0^{l_1} (s,\xi) :=  \iint_{\R^3 \times \R^3} 
  e^{is \phi(\xi,\eta,\sigma)} {|\eta|}^{-2} \varphi_{l_1} (\eta) \varphi_{l_0}^{(l_0)}(\eta)
  \what{f}(s,\xi+\eta) \what{f}(s,\xi+\s) \overline{\what{f}}(s,\xi+\eta+\s) \, d\eta d\s \, . 
\end{split}
\end{align*}
Using the a priori bounds \eqref{apriori10} on the $L^\infty_\xi$-norm and on Sobolev norms, we can estimate
\begin{align}
 \label{est01}
| I_0^{l_1} (s,\xi) | \lesssim 2^{l_1} 2^{-10k} {\| f \|}_{H^{10}}^2 {\| {(1+|\xi|)}^{10} \what{f} \|}_{L^\infty} 
  \les  2^{l_1} 2^{2mp_0} 2^{-10k}  \e_1^3 \, .
\end{align}
On the other hand, using only the bounds on high Sobolev norms one can see that
\begin{align}
\label{est02}
| I_0^{l_1} (s,\xi) | \lesssim 2^{-l_1/2} 2^{-N k} {\| f \|}_{H^N}^3 \les  2^{-l_1/2} 2^{3 m p_0} 2^{-N k} \e_1^3 \, .
\end{align}
Using \eqref{est01} in the case $l_1 \leq -2m$, and \eqref{est01} for $l_1 \geq -2m$, together with $k \geq m/300$ and $N=1000$,
we obtain the desired conclusion.
\end{proof}

The next Lemma shows how to derive the correction term to the scattering, 
and proves the validity of the second inequality in \eqref{keybound1}.

\begin{lem}[Critical frequencies]\label{lemcorr}
Assume that \eqref{apriori} holds. Then, for all $m \in \{0,1,\dots\}$, $s \in [2^m-1,2^m]$,
and $k\in \Z \cap (-\infty, m/300]$, $|\xi| \in [2^k,2^{k+1}]$, we have
\begin{align}
\label{estlemcorr0}
 & \Big|  I_0 (s,\xi) - i c_0 \int_{\R^3} {\Big| \frac{\xi}{\langle \xi \rangle} - \frac{\s}{\langle \s\rangle} \Big|}^{-1} 
  {|\what{f}(s,\s)|}^2 \, d\s \, \frac{\what{f}(s,\xi)}{s+1}  \Big|
  \les \e_1^3 2^{-21m/20} \, .
\end{align}
\end{lem}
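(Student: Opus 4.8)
The plan is to isolate the leading-order behavior of $I_0(s,\xi)$ in powers of $s$ by successively replacing the oscillatory phase and the profiles with their principal approximations, while controlling every error using the weighted norms in \eqref{apriori10} and the pointwise decay \eqref{aprioridecay}. First I would fix $s\in[2^m-1,2^m]$ and $|\xi|\le 2^{m/300}$, and recall that on the support of $\varphi^{(l_0)}_{l_0}(\eta)$ we have $|\eta|\lesssim 2^{l_0}\lesssim 2^{-29m/40}$, so $\eta$ is extremely small compared to $s$ in the sense that $s|\eta|^2\lesssim 2^{m-29m/20}=2^{-9m/20}$ is tiny. Taylor expanding the phase,
\begin{align*}
\phi(\xi,\eta,\sigma) = \eta\cdot\big(\Lambda'(\xi) - \Lambda'(\xi+\sigma)\big) + O(|\eta|^2) = -\,\eta\cdot z(\xi,\sigma) + O(|\eta|^2)\,,
\end{align*}
where $z(\xi,\sigma) = \xi/\langle\xi\rangle - (\xi+\sigma)/\langle\xi+\sigma\rangle$, one writes $e^{is\phi} = e^{-is\eta\cdot z} + e^{-is\eta\cdot z}\big(e^{isO(|\eta|^2)}-1\big)$; the second piece carries a factor $s|\eta|^2\lesssim 2^{-9m/20}$ and, after integrating $|\eta|^{-2}\varphi^{(l_0)}_{l_0}$ in $\eta$ (which contributes $O(2^{l_0})$) and using the $L^\infty_\xi$ and $H^{10}$ bounds on the three profiles, is bounded by $\e_1^3 2^{l_0}2^{-9m/20}2^{2mp_0}$, comfortably below $2^{-21m/20}$. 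This reduces matters to the model integral with phase $e^{-is\eta\cdot z}$.

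Next I would replace $\widehat f(s,\xi+\eta)$ by $\widehat f(s,\xi)$ and $\overline{\widehat f}(s,\xi+\eta+\sigma)$ by $\overline{\widehat f}(s,\xi+\sigma)$, using $|\widehat f(s,\xi+\eta)-\widehat f(s,\xi)|\lesssim |\eta|\,\|\partial\widehat f(s)\|_{L^\infty}$ where $\|\partial\widehat f(s)\|$ is controlled by interpolating $\|xf(s)\|_{L^2}$ and $\|x^2 f(s)\|_{L^2}$ via Sobolev embedding; each such substitution loses a factor $|\eta|$, which is more than enough to make $\int |\eta|^{-2}|\eta|\,d\eta$ over $|\eta|\lesssim 2^{l_0}$ converge and produce an error $\lesssim \e_1^3 2^{l_0}2^{Cmp_0}$, again $\ll 2^{-21m/20}$ since $l_0\le -29m/40+1$. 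After these replacements, the $\eta$-integral factors out:
\begin{align*}
I_0(s,\xi) \approx ic_1\,\widehat f(s,\xi)\int_{\R^3}\Big(\int_{\R^3} e^{-is\eta\cdot z(\xi,\sigma)}|\eta|^{-2}\varphi^{(l_0)}_{l_0}(\eta)\,d\eta\Big)\,|\widehat f(s,\xi+\sigma)|^2\,d\sigma\,.
\end{align*}
Since $\mathcal F(|\eta|^{-2})(\zeta) = c\,|\zeta|^{-1}$ in $\R^3$, the inner integral equals $c\,|sz|^{-1} = c\,s^{-1}|z(\xi,\sigma)|^{-1}$ up to an error coming from the cutoff $\varphi^{(l_0)}_{l_0}$; removing the cutoff means adding back $\int e^{-is\eta\cdot z}|\eta|^{-2}(1-\varphi^{(l_0)}_{l_0}(\eta))\,d\eta$, which by non-stationary phase / integration by parts in $\eta$ (the integrand is smooth away from $0$, where it is cut off) is $O((s\,2^{l_0})^{-M})$ for any $M$, hence negligible because $s\,2^{l_0}\sim 2^{m/40}\to\infty$. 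Changing variables $\sigma\mapsto\sigma$ back to the original parametrization so that $|\widehat f(s,\xi+\sigma)|^2\,d\sigma$ becomes $|\widehat f(s,\sigma)|^2\,d\sigma$ and noting $z(\xi,\sigma)\mapsto \xi/\langle\xi\rangle - \sigma/\langle\sigma\rangle$, and collecting the constant $ic_1\cdot c\cdot(\text{Jacobian}) = ic_0$ (this is where $c_1 = 2(2\pi)^{-5}$, $\mathcal F(|x|^{-1})=4\pi|\xi|^{-2}$, and $c_0=(2\pi)^{-3}$ must be matched, and where $s+1$ rather than $s$ appears after absorbing the harmless difference $s^{-1}-(s+1)^{-1}=O(2^{-2m})$), one arrives exactly at the claimed main term $ic_0\,(s+1)^{-1}\widehat f(s,\xi)\int |\xi/\langle\xi\rangle-\sigma/\langle\sigma\rangle|^{-1}|\widehat f(s,\sigma)|^2\,d\sigma$.

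The main obstacle, and the step demanding the most care, is the integrability of the $\sigma$-integral $\int |z(\xi,\sigma)|^{-1}|\widehat f(s,\sigma)|^2\,d\sigma$ near the set where $z(\xi,\sigma)=0$, i.e. $\sigma/\langle\sigma\rangle = \xi/\langle\xi\rangle$, which (since $r\mapsto r/\langle r\rangle$ is a diffeomorphism of $[0,\infty)$) means $|\sigma| = |\xi|$ with $\sigma$ collinear with $\xi$ — actually $\sigma=\xi$. Near $\sigma=\xi$ one has $|z(\xi,\sigma)|\gtrsim \langle\xi\rangle^{-3}|\sigma-\xi|$, so the singularity is $|\sigma-\xi|^{-1}$ in $\R^3$, which is locally integrable, and the integral is bounded using $\|(1+|\sigma|)^{10}\widehat f(s)\|_{L^\infty}\le\e_1$; this also shows $B(t,\xi)$ in \eqref{correction} is well-defined and real. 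One must additionally check that the error terms accumulated above, each of which carried stray factors $|\eta|$ or $|\widehat f(s,\xi+\sigma)-\widehat f(s,\xi+\eta+\sigma)|$ etc., also have $\sigma$-integrals that converge — here the weighted bound $\|x^2 f(s)\|_{L^2}$ gives enough decay of $\widehat f(s,\sigma)$ in $\sigma$ to integrate against the mild singularity, and the gain of $2^{l_0}$ (or a power of $s\,2^{l_0}$) in each error term beats $2^{-21m/20}$ with room to spare precisely because of the choice $l_0 = [-29m/40]+1$ in \eqref{l_0}. Finally I would verify the high-frequency-in-$\xi$ decay $2^{-10k_+}$ is automatic here since $|\xi|\le 2^{m/300}$ restricts to $k\le m/300$, and on this range $2^{-21m/20}$ already dominates $\e_1^3 2^{-(1+p_1)m}2^{-10k_+}$ after choosing $p_1$ small.
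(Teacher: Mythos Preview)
Your three-step strategy (phase approximation, profile approximation, explicit Fourier transform of $|\eta|^{-2}$) is exactly the paper's, but two of the steps contain genuine gaps as written.

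\emph{Profile approximation.} You claim $|\widehat{f}(s,\xi+\eta)-\widehat{f}(s,\xi)|\lesssim |\eta|\,\|\partial\widehat{f}(s)\|_{L^\infty}$ with the $L^\infty$ norm controlled by interpolating $\|xf\|_{L^2}$ and $\|x^2f\|_{L^2}$ via Sobolev embedding. This fails in $\R^3$: the a priori bounds only give $\widehat{f}\in H^2$, hence $\partial\widehat{f}\in H^1$, and $H^1(\R^3)$ does not embed into $L^\infty$. Equivalently, $xf\in L^1$ is not a consequence of $xf,\,x^2f\in L^2$ in three dimensions (the tail integral $\int_{|x|>R}|x|^{-2}\,dx$ diverges). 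The paper instead splits $f=f_{\le J}+f_{\ge J}$ spatially at scale $2^J=2^{-l_0}$ and uses $\|x^2f\|_{L^2}$ on each piece to obtain the weaker but sufficient pointwise bound $|\widehat{f}(\rho+\eta)-\widehat{f}(\rho)|\lesssim \e_1 2^{2mp_0}2^{l_0/2}$. An equally valid shortcut is the Morrey embedding $\dot{H}^2(\R^3)\hookrightarrow \dot{C}^{1/2}$, which gives the same $|\eta|^{1/2}$ gain; either way the error is $\lesssim \e_1^3 2^{3l_0/2}2^{2mp_0}\lesssim \e_1^3 2^{-21m/20}$, not the $\e_1^3 2^{l_0}$ you wrote.

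\emph{Removing the cutoff.} Your assertion that $\int e^{-is\eta\cdot z}|\eta|^{-2}(1-\varphi^{(l_0)}_{l_0}(\eta))\,d\eta=O\big((s\,2^{l_0})^{-M}\big)$ is not uniform in $z$: integration by parts gains powers of $(s|z|)^{-1}$, not $(s\,2^{l_0})^{-1}$, and for $\sigma$ near $\xi$ one has $z\to 0$, where in fact the tail integral diverges (at $z=0$ it is $\int_{|\eta|\gtrsim 2^{l_0}}|\eta|^{-2}\,d\eta=\infty$). The paper handles this with the explicit formula $\big|\int e^{i\eta\cdot x}|\eta|^{-2}\varphi(\eta 2^{-l})\,d\eta-2\pi^2|x|^{-1}\big|\lesssim |x|^{-2}2^{-l}$, applied with $x=sz$, which carries a $|z|^{-2}$ singularity; one must then verify separately that $\int|z|^{-2}|\widehat{f}(s,\sigma)|^2\,d\sigma\lesssim\e_1^2$, using the lower bound $|z|\gtrsim\min\{1,|\sigma|,|\xi-\sigma|\langle\sigma\rangle^{-3}\}$. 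Your integrability discussion covers $|z|^{-1}$ (for the main term) but not the $|z|^{-2}$ that the error actually produces. Also, arithmetically, $s\,2^{l_0}\sim 2^{11m/40}$, not $2^{m/40}$.
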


\begin{proof}
Let us recall the definition of $I_0$:
\begin{align}
\label{I_0}
\begin{split}
I_0 (s,\xi) & := i c_1  \iint e^{is \phi} {|\eta|}^{-2} \varphi_{l_0}^{(l_0)}(\eta)
  \widehat{f}(s,\xi+\eta) \widehat{f}(s,\xi+\sigma) \overline{\widehat{f}}(s,\xi+\eta+\sigma) \, d\eta d\sigma \, ,
\\
& \phi(\xi,\eta,\sigma) := - \Lambda(\xi)  + \Lambda(\eta+\xi) + \Lambda(\sigma+\xi) - \Lambda(\xi+\eta+\sigma) \, ,
\end{split}
\end{align}
where $\varphi_{l_0}^{(l_0)}$ is defined in \eqref{phi^m_k}, and $2^{l_0} \lesssim 2^{-29m/40}$.

\subsubsection*{Phase approximation}
In the support of the integral in \eqref{I_0} we can approximate the phase $\phi$ by a simpler expression.
Defining
\begin{align}
 \label{phi_0}
\phi_0(\xi,\eta,\s) :=  \eta \cdot \left( \frac{\xi}{\langle \xi \rangle} - \frac{\xi+\s}{\langle \xi + \s \rangle} \right) \, ,
\end{align}
we compute
\begin{align*}
\phi(\xi,\eta,\s) & = \frac{ |\eta|^2 + 2\eta \cdot \xi }{ \langle \xi \rangle + \langle \xi+\eta \rangle}
  + \frac{ - |\eta|^2 - 2\eta \cdot (\xi+\s) }{ \langle \xi+\s \rangle + \langle \xi+\eta +\s\rangle}
  = \frac{ \eta \cdot \xi }{ \langle \xi \rangle }
  + \frac{- \eta \cdot (\xi+\s) }{ \langle \xi+\s \rangle} + O \left( |\eta|^2 \right) \, ,
\end{align*}
and conclude
\begin{align}
\left| \phi(\xi,\eta,\s) - \phi_0(\xi,\eta,\s) \right| \lesssim |\eta|^2 \, .
\end{align}
Thus, if we let
\begin{align*}
 I_{0,1} (s,\xi) & := i c_1 \iint e^{is \phi_0(\xi,\eta,\s)} {|\eta|}^{-2} \varphi_{l_0}^{(l_0)}(\eta)
  \widehat{f}(s,\xi+\eta) \widehat{f}(s,\xi+\s) \overline{\widehat{f}}(s,\xi+\eta+\s) \, d\eta d\s
\end{align*}
it follows that for $s \in [2^m-1,2^m]$
\begin{align*}
& \left| I_0 (s,\xi) - I_{0,1} (s,\xi) \right| 
\\
& \lesssim \iint s |\phi(\xi,\eta,\s) - \phi_0(\xi,\eta,\s)| {|\eta|}^{-2} \varphi_{l_0}^{(l_0)}(\eta)
    | \widehat{f}(s,\xi+\eta)| \, |\widehat{f}(s,\xi+\s)| \, |\overline{\widehat{f}}(s,\xi+\eta+\s)| \, d\eta d\s 
\\ 
& \lesssim 2^m 2^{3l_0} 
  {\| f(s) \|}_{L^2}^2 {\| \what{f}(s) \|}_{L^\infty} \les \e_1^3 2^{-11m/10} \, , 
\end{align*}
having used $ 3l_0 \les -21m/10$.

\subsubsection*{Profiles approximation}
We now want to further approximate $I_{0,1}$ by the expression
\begin{align}
\label{I_02}
I_{0,2} (s,\xi) & := i c_1 \iint e^{is \phi_0(\xi,\eta,\s)} {|\eta|}^{-2} \varphi_{l_0}^{(l_0)}(\eta)
  \widehat{f}(s,\xi) \widehat{f}(s,\xi+\s) \overline{\widehat{f}}(s,\xi+\s) \, d\eta d\s \, .
\end{align}
In order to do this let $\varphi$ and $\varphi_k$ be given be as in \eqref{phi_k}, and define
\begin{align*}
f_{\leq J}(x) & := \varphi_J^{(J)} (x) f(x) \qquad \mbox{for} \quad J \geq 1 
\\
f_{\geq J}(x) & := f(x) - f_{\leq J}(x) \, . 
\end{align*}
The function $f_{\leq J}$ is the restriction of $f$ to a ball centered at the origin and radius $\sim 2^J$ in real space.
 $f_{\geq J}$ is the portion of $f$ which lies at a distance greater than $2^J$ from the origin.
Using the a priori weighted bounds on $f$, we see that for $|\eta| \les 2^{l_0}$ one has
\begin{align*}
| \what{f}(\rho + \eta) - \what{f}(\rho) |  & \les 
  | \what{f_{\geq J}}(\rho + \eta) - \what{f_{\geq J}}(\rho) | + | \what{f_{\leq J}}(\rho + \eta) - \what{f_{\leq J}}(\rho) |
\\ 
  & \les {\| \what{f_{\geq J}} \|}_{L^\infty} +  {\| \partial \what{f_{\leq J}} \|}_{L^\infty} 2^{l_0}
\les 2^{-J/2} {\| x^2 f \|}_{L^2} + 2^{J/2} {\| x^2 f \|}_{L^2} 2^{l_0}
\\
& \les \e_1 2^{2mp_0} \big( 2^{-J/2} + 2^{J/2} 2^{l_0} \big) \, .
\end{align*}
Choosing $J = - l_0$ we obtain
\begin{align*}
| \what{f}(\rho + \eta) - \what{f}(\rho) |  & \les \e_1 2^{2mp_0} 2^{l_0/2} \, .
\end{align*}
From this, for $|\eta| \les 2^{l_0}$,  we see that
\begin{align*}
\int_{\R^3} \big| \widehat{f}(s,\xi+\eta) \widehat{f}(s,\xi+\s) \overline{\widehat{f}}(s,\xi+\eta+\s)
  - \widehat{f}(s,\xi) \widehat{f}(s,\xi+\s) \overline{\widehat{f}}(s,\xi+\s) \big| \, d\s \lesssim \e_1^3 2^{l_0/2} 2^{2mp_0} \, .
\end{align*}
As a consequence, for all $s \in [2^m-1,2^m]$, we obtain
\begin{align*}
& \left| I_{0,1} (s,\xi) - I_{0,2} (s,\xi) \right| \les 2^{3l_0/2} \e_1^3 2^{2mp_0} \les \e_1^3 2^{-21m/10} \, ,
\end{align*}
since $p_0 \leq 1/1000$.

\subsubsection*{Final approximation}
To conclude the proof we need to show
\begin{equation}
\label{estlemcorr1}
\left| I_{0,2} (s,\xi) - i c_0 \int_{\R^3} {\left| \frac{\xi}{\langle \xi \rangle} - \frac{\s}{\langle \s\rangle} \right|}^{-1} 
  {|\what{u}(s,\s)|}^2 \, d\s \frac{\what{f}(s,\xi)}{s+1}  \right| \les \e_1^3 2^{-21m/20} \, ,
\end{equation}
where $I_{0,2}$ is given by \eqref{I_02} and \eqref{phi_0}. 
After a change of variables, \eqref{estlemcorr1} can be reduced to
\begin{equation}
\label{estlemcorr2}
\left| c_1 \iint e^{is \eta \cdot z} 
  {|\eta|}^{-2} \varphi_{l_0}^{(l_0)}(\eta) {| \what{f}(s,\s)|}^2 \, d\eta  d\s 
  - \frac{c_0}{s} \int {\left| z \right|}^{-1} {|\what{f}(s,\s)|}^2 \, d\s \right| 
\les \e_1^2 2^{-21m/20} \, 
\end{equation}
where we have defined
\begin{align}
\label{z}
 z := \frac{\xi}{\langle \xi \rangle} - \frac{\s}{\langle \s\rangle} \, .
\end{align}


Observe that since $\F(|\eta|^{-2})(x) = 2\pi^2|x|^{-1}$, the following general formula holds for $x \in \R^3$:
\begin{align}
\label{formula}
\left| \int_{\R^3} e^{i \eta \cdot x} {|\eta|}^{-2} \varphi (\eta 2^{-l}) \, d\eta - 2\pi^2 {|x|}^{-1} \right| \les {|x|}^{-2} 2^{-l} \, .
\end{align}
Applying this with $x = sz$, $s\in[2^m-1,2^{m+1}]$, and $l=l_0 \gtrsim -29m/40$, gives us 
\begin{align*}
\left| c_1 \int e^{is \eta \cdot z} 
  {|\eta|}^{-2} \varphi_{l_0}^{(l_0)}(\eta) \, d\eta - \frac{c_0}{s} {\left| z \right|}^{-1} \right| 
\les  {|s z|}^{-2} 2^{-l_0} \les 2^{-5m/4} {|z|}^{-2} \, ,
\end{align*}
since $c_0 = {(2\pi)}^{-3}$ and $c_1 = 2{(2\pi)}^{-5}$.
Using this we can see that the left-hand side of \eqref{estlemcorr2} is bounded by
\begin{align}
 \label{estlemcorr3}
2^{-5m/4} \left| \int {|z|}^{-2} {|\what{f}(s,\s)|}^2 \, d\s \right| \, ,  
\end{align}
where $z$ is defined in \eqref{z}.
Since 
$|z| \gtrsim \min \{1, |\s|, |\xi-\s| {\langle \s \rangle}^{-3} \}$,
and $| {(1+|\xi|)}^{10} \what{f} |$ is a priori bounded in $L^\infty$, we see that
\begin{align*}
\left| \int {|z|}^{-2} {|\what{f}(s,\s)|}^2 \, d\s \right| \les \e_1^2 \, .
\end{align*}
Plugging this bound into \eqref{estlemcorr3} we obtain \eqref{estlemcorr2} and conclude the proof of the Lemma.
\end{proof}

\vskip10pt
\subsection{Proof of \eqref{keybound2}}\label{secrem}
We aim to prove:
\begin{align}
\label{keybound200}
\sum_{l_1 \in\Z, l_1 > l_0} \left|  I_{l_1} (s,\xi)\,ds \right| \les \e_1^3 2^{-(1+p_1) m} 2^{-10k_+} \, .
\end{align}
for $I_{l_1}$ defined in \eqref{I}, $s \in [2^m-1,2^{m+1}]$ with $m \in \{0,1, \dots\}$, $|\xi| \approx 2^k$ with $k \in \Z$,
and $l_0$ given by \eqref{l_0}. 
We decompose in dyadic pieces all the profiles and write
\begin{align}
\label{Isum}
\begin{split}
&  I_{l_1} = \sum_{k_1,k_2,k_3 \in \Z} I_{l_1}^{k_1,k_2,k_3}(s,\xi)
\\
& I_{l_1}^{k_1,k_2,k_3} (s,\xi) := \iint_{\R^3 \times \R^3} 
  e^{is \phi(\xi,\eta,\sigma)} {|\eta|}^{-2} \varphi_{l_1}^{(l_0)}(\eta)
  \what{f_{k_1}}(s,\xi+\eta) \overline{\what{f_{k_2}}}(s,\xi+\eta+\s) \what{f_{k_3}}(s,\xi+\s) \, d\eta d\s  \, .
\end{split}
\end{align}
We then aim to show
\begin{align}
\label{keybound300}
\sum_{l_1 > l_0, k_1,k_2,k_3 \in \Z} \left| I_{l_1}^{k_1,k_2,k_3}(s,\xi)\,ds \right| \les \e_1^3 2^{-(1+p_1) m} 2^{-10k_+} \, .
\end{align}

For all $s \in [2^m-1,2^{m+1}]$, we can estimate
\begin{align}
 \label{estI1}
| I_{l_1}^{k_1,k_2,k_3} (s,\xi) | & \les 2^{-l_1/2}  
  {\| \what{f_{k_1}} \|}_{L^2} {\| \what{f_{k_2}} \|}_{L^2} {\| \what{f_{k_3}} \|}_{L^2} \, .
\end{align}
Using the a priori bounds \eqref{apriori10} we know that
\begin{align}
 {\| \what{f_{k}} \|}_{L^2} \les \min \big\{ 2^{3k/2} 2^{-10k_+} , 2^{Nk_+} 2^{mp_0} \big\} \, .
\end{align}
Since $l_1 > l_0 \geq -3m/4$, and $N=1000$,
the last two estimates above suffice to show that the sum in \eqref{keybound300}
over those indexes $(k_1,k_2,k_3)$ with $\max\{k_1,k_2,k_3\} \geq m/300$ or $\min\{k_1,k_2,k_3\} \leq -m$,
satisfies the desired bound.
Since $l_1 \les \max\{k_1,k_2\}$, the remaining indexes in the sum are $O(m^4)$. 
The bound \eqref{keybound300} can then be reduced to showing that for
$s \in [2^m-1,2^{m+1}]$ with $m \in \{0,1, \dots\}$, and $|\xi| \approx 2^k$ with $k \in \Z \cap (-\infty, m/300 + 10]$, one has
\begin{align}
 \label{keybound400}
| I_{l_1}^{k_1,k_2,k_3} (s,\xi) | & \les \e_1^3 2^{-(1 + 2p_1) m} 2^{-10k_+} \, .
\end{align}
for fixed triples $(k_1,k_2,k_3)$ with
\begin{align*}
 -m \leq k_1,k_2,k_3 \leq m/300 \, .
\end{align*}

Let us further decompose
\begin{align}
\label{Isum2}
\begin{split}
&  I_{l_1}^{k_1,k_2,k_3}(s,\xi) = \sum_{l_2 \in \Z} I_{l_1,l_2}^{k_1,k_2,k_3}(s,\xi)
\\
& I_{l_1,l_2}^{k_1,k_2,k_3} (s,\xi) := \iint
  e^{is \phi(\xi,\eta,\sigma)} {|\eta|}^{-2} \varphi_{l_1}^{(l_0)}(\eta) \varphi_{l_2}(\s)
  \what{f_{k_1}}(s,\xi+\eta) \overline{\what{f_{k_2}}}(s,\xi+\eta+\s) \what{f_{k_3}}(s,\xi+\s) \, d\eta d\s  \, .
\end{split}
\end{align}
The above terms are zero if $l_2 \geq m/300 + 10$. Moreover, we can estimate
\begin{align*}
| I_{l_1,l_2}^{k_1,k_2,k_3} (s,\xi) | \lesssim  2^{-2l_1} 2^{-10 \max(k_1,k_2,k_3)_+} {\| \what{f}(s) \|}^3_{L^\infty}  
  \iint \varphi_{l_1}(\eta) \varphi_{l_2}(\s) \, d\eta d\s
  \les \e_1^3 2^{l_1} 2^{3l_2} 2^{-10k_+}  \, .
\end{align*}
This shows that
\begin{align*}
\sum_{l_2 \, : \, 3l_2 \leq -101m/100 - l_1} \left| I_{l_1,l_2}^{k_1,k_2,k_3}(s,\xi)\,ds \right| \les \e_1^3 2^{-(1+2p_1) m} 2^{-10k_+} \, .
\end{align*}
We are then left again with a summation over $l_2$ with only $O(m)$ terms.
Therefore, we see that \eqref{keybound200}, and hence  \eqref{keybound2}, will be a consequence of the following:

\vskip5pt
\begin{proposition}\label{pronr}
Let $I_{l_1,l_2}^{k_1,k_2,k_3}$ be defined as in \eqref{Isum2}, and assume \eqref{apriori10} holds.
Then, for all $s \in [2^m-1,2^{m+1}]$ with $m \in \{0,1, \dots\}$, $|\xi| \in [2^k,2^{k+1}]$ with $k \in \Z \cap (-\infty, m/300 + 10]$,
one has
\begin{align}
\label{keybound500}
\Big| I_{l_1,l_2}^{k_1,k_2,k_3} (s,\xi) \Big| \les \e_1^3 2^{-101m/100} \, ,
\end{align}
whenever
\begin{align}
 \label{freq0}
\begin{split}
-m \leq k_1,k_2,k_3 \leq m/300 \quad , \quad l_1 \geq - 29m/40 \quad \mbox{and}  \quad l_1+ 3l_2 \geq -101m/100 \, .
\end{split}
\end{align}
\end{proposition}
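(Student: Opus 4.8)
The plan is to estimate $I_{l_1,l_2}^{k_1,k_2,k_3}$ by exploiting the oscillation of the phase $\phi$ through integration by parts in $\eta$ (and, where needed, in $\sigma$). The key observation is that since we are in the regime $|\eta| \sim 2^{l_1}$ with $l_1 > l_0 \gtrsim -29m/40$, and $|\xi|, |\eta+\xi|, |\sigma+\xi|, |\xi+\eta+\sigma|$ are all bounded above by roughly $2^{m/300}$, the gradient $\nabla_\eta \phi(\xi,\eta,\sigma) = \frac{\eta+\xi}{\langle \eta+\xi\rangle} - \frac{\xi+\eta+\sigma}{\langle \xi+\eta+\sigma\rangle}$ is, generically, of size comparable to $|\sigma| \sim 2^{l_2}$ (up to the Jacobian factors $\langle\cdot\rangle^{-3}$, which are bounded below on our frequency-localized region). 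Thus $s|\nabla_\eta\phi| \gtrsim 2^m 2^{l_2} 2^{-C m/300}$, which is large provided $l_2$ is not too negative — and this is exactly where the hypothesis $l_1 + 3l_2 \geq -101m/100$ together with $l_1 \geq -29m/40$ will give us the room we need. First I would record the pointwise lower bounds on $|\nabla_\eta\phi|$ (and the upper bounds on $\nabla_\eta^j \phi$ for $j \geq 2$) on the support of the integrand, using the elementary identities for $\Lambda' = \Lambda'(\cdot) = \cdot/\langle\cdot\rangle$ and the fact that all relevant frequencies lie in $\{2^{-m} \lesssim |\cdot| \lesssim 2^{m/300}\}$.

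Next I would integrate by parts repeatedly in $\eta$: writing $e^{is\phi} = \frac{\nabla_\eta\phi}{is|\nabla_\eta\phi|^2}\cdot\nabla_\eta e^{is\phi}$ and moving the derivative onto the symbol $|\eta|^{-2}\varphi_{l_1}^{(l_0)}(\eta)$, the cutoffs $\varphi_{k_j}$, and the profiles $\what{f_{k_j}}$. Each integration by parts gains a factor $(s|\nabla_\eta\phi|)^{-1} \lesssim 2^{-m} 2^{-l_2} 2^{Cm/300}$, while the cost of differentiating the symbol and cutoffs is at worst $2^{-l_1}$ per derivative (the $|\eta|^{-2}\varphi_{l_1}$ piece is an order-$(-2)$ symbol at scale $2^{l_1}$), and differentiating a profile $\what{f_{k_j}}$ costs at most $\|x f\|$ or $\|x^2 f\|$ type norms, controlled by $2^{2mp_0}$ via \eqref{apriori10}. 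After $L$ integrations by parts, the volume of integration ($\sim 2^{3l_1} 2^{3l_2}$, or a piece thereof) times $\|\what{f}\|_{L^\infty}^3 \lesssim \e_1^3 2^{-10k_+}$-type bounds, times the gain $2^{-Lm}2^{-Ll_2}2^{CLm/300}$, times the loss $2^{-Ll_1}2^{2Lmp_0}$, must beat $2^{-101m/100}$. Since $l_1 \geq -29m/40$ and $l_1 + 3l_2 \geq -101m/100$ force $l_2 \gtrsim -m/3$ roughly, one checks that the net exponent is $\leq -(1+\delta)m$ for a suitable $\delta > 0$ once $L$ is taken large enough (here $N=1000$ and $p_0 = 1/1000$ are comfortably large/small enough to absorb the $mp_0$ and $m/300$ losses); for the handful of degenerate configurations where $\nabla_\eta\phi$ is small (when $\sigma+\xi$ and $\xi+\eta+\sigma$ nearly coincide, i.e. $\eta$ small, but that is excluded by $|\eta|\sim 2^{l_1}\gtrsim 2^{-29m/40}$) one either has no degeneracy or one can instead integrate by parts in $\sigma$, where $\nabla_\sigma\phi = \frac{\sigma+\xi}{\langle\sigma+\xi\rangle} - \frac{\xi+\eta+\sigma}{\langle\xi+\eta+\sigma\rangle}$ is comparable to $|\eta|\sim 2^{l_1}\gtrsim 2^{-29m/40}$, which is again large after multiplying by $s\sim 2^m$.

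The main obstacle I anticipate is bookkeeping the competition between the several small parameters $l_1, l_2, k, k_1, k_2, k_3, m$ and making sure the integration-by-parts gain genuinely dominates in the worst corner of the region \eqref{freq0}, namely $l_1$ near $-29m/40$ and $l_2$ near its minimal value $\tfrac13(-101m/100 - l_1) \approx -0.093\,m$. In that corner the phase gradient in $\eta$ is only $\sim 2^m 2^{l_2} \sim 2^{0.9 m}$, so one integration by parts is not enough to reach $2^{-101m/100}$; one needs several, and must verify that the symbol derivatives (scaling like $2^{-l_1}$) do not eat up the gain. Carrying enough derivatives — roughly $L \gtrsim 10$, comfortably allowed by the regularity in \eqref{apriori10} — and tracking that $-l_1 \leq 29m/40$ stays strictly below $m$ per derivative while the phase gain is $\geq (1 - 0.1 - \epsilon)m$ per derivative, closes the estimate with a definite power $2^{-2p_1 m}$ to spare, which gives \eqref{keybound400} and hence \eqref{keybound500}.
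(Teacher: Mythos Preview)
Your instinct to integrate by parts in $\eta$ using the lower bound on $|\nabla_\eta\phi|$ is exactly right and is what the paper does. But there is a genuine gap in the proposal: you cannot integrate by parts ``$L\gtrsim 10$'' times. The a~priori bounds \eqref{apriori10} control only ${\|x^j f\|}_{H^2}$ for $j\le 2$, i.e.\ at most two $\xi$--derivatives of $\what f$. After $L$ integrations by parts in $\eta$, the Leibniz expansion produces terms with up to $L$ derivatives landing on a single profile $\what{f_{k_j}}(s,\xi+\eta)$, and for $L\ge 3$ these are not controlled by any norm in \eqref{apriori10}. Your remark that ``$L\gtrsim 10$ [is] comfortably allowed by the regularity in \eqref{apriori10}'' conflates the high Sobolev index $N=1000$ (powers of $\xi$ on $\what f$) with weighted control (powers of $x$ on $f$, i.e.\ $\xi$--derivatives of $\what f$); only the latter is relevant for IBP in frequency, and it stops at order two.

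The paper's proof integrates by parts at most \emph{twice} in $\eta$, and the arithmetic then closes only because of an ingredient you do not mention: after the IBPs the resulting bilinear expression is recast as a pseudo-product (Lemma~\ref{lemprod}), and the factor $f_{k_3}$ --- which does not depend on $\eta$ --- is placed in physical-space $L^\infty$ as $u_{k_3}=e^{-is\Lambda}f_{k_3}$, yielding the dispersive gain $\|u_{k_3}(s)\|_{L^\infty}\lesssim 2^{-3m/2}\e_1$. This extra $2^{-3m/2}$ is what makes two IBPs sufficient. The paper also splits into three cases (Lemmas~\ref{lemC0}--\ref{lemC2}) according to the relative sizes of $k_1,k_2,l_1$: in particular when $\max\{k_1,k_2\}\le l_1$ (Lemma~\ref{lemC0}) the cutoffs $\varphi_{k_j}(\xi+\eta)$ live at a scale possibly much smaller than $2^{l_1}$, so differentiating them costs $2^{-k_j}$ rather than the $2^{-l_1}$ you assume, and IBP is not used at all there --- one estimates directly using $\|\what{f_{k_j}}\|_{L^2}\lesssim 2^{3k_j/2}\e_1$ together with the dispersive decay of $u_{k_3}$. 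Your numerical bookkeeping also underestimates the worst corner: with $l_1$ near $+m/300$ the constraint $l_1+3l_2\ge -101m/100$ allows $l_2$ down to roughly $-0.34m$, not $-0.093m$, so the per-IBP phase gain $2^{-m-l_2}$ is only about $2^{-0.66m}$, smaller than the symbol cost $2^{-l_1}\le 2^{0.725m}$ on the other extreme --- one more reason the dispersive $2^{-3m/2}$ is indispensable.
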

The above Proposition is proven in a few steps, through Lemma \ref{lemC0}, \ref{lemC1} and \ref{lemC2} below.
We will always be under the assumption that \eqref{apriori10} holds, and all the indexes verify \eqref{freq0}.

\vskip5pt
\begin{lem}\label{lemC0}
The bound \eqref{keybound500} holds if
\begin{align}
\label{freqC0}
\max \{ k_1,k_2 \} \leq l_1 \, .
\end{align}
\end{lem}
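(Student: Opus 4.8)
The plan is to prove Lemma \ref{lemC0} by exploiting the fact that when $\max\{k_1,k_2\} \leq l_1$, the frequency variable $\eta$ in the integral \eqref{Isum2} is comparable to or larger than the frequency supports of $\what{f_{k_1}}$ and $\what{f_{k_2}}$, so that we essentially have a ``high $\eta$ output'' situation in which the bound $|\eta|^{-2} \varphi_{l_1}^{(l_0)}(\eta)$ is bounded by $2^{-2l_1}$. First I would simply estimate $I_{l_1,l_2}^{k_1,k_2,k_3}$ crudely: placing $\what{f_{k_1}}$ and $\what{f_{k_3}}$ in $L^\infty_\xi$ (controlled by $\e_1 2^{-10(k_1)_+}$ and $\e_1 2^{-10(k_3)_+}$), $\what{f_{k_2}}$ in $L^2$, and using that the $\eta$-integral of $|\eta|^{-2}\varphi_{l_1}^{(l_0)}$ over a ball of radius $2^{l_1}$ contributes $\lesssim 2^{l_1}$, together with the $\sigma$-localization at scale $2^{l_2}$, which gives a factor $2^{3l_2/2}$ from Cauchy--Schwarz in $\sigma$. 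This yields roughly
\begin{align*}
| I_{l_1,l_2}^{k_1,k_2,k_3} (s,\xi) | \les \e_1^3 \, 2^{l_1} 2^{3l_2/2} 2^{3k_2/2} 2^{-10(k_2)_+} \, ,
\end{align*}
or variants thereof depending on how one distributes $L^2$ versus $L^\infty$ norms among the three factors.

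The key point is then to convert the constraint \eqref{freqC0} together with the standing assumptions \eqref{freq0} into the desired gain of $2^{-101m/100}$. Since $\max\{k_1,k_2\}\le l_1$, and since the $\eta$-support forces $l_1 \lesssim \max\{k_1,k_2\}$ from the triangle inequality applied to $\xi$, $\xi+\eta$ having frequencies $2^k$, $2^{k_1}$ with $k,k_1 \le m/300$ — wait, more carefully: the relevant point is that the outputs $\xi+\eta$ and $\xi+\eta+\sigma$ have frequencies $2^{k_1}$ and $2^{k_2}$, so if $|\eta|\sim 2^{l_1}$ is much larger than both $2^{k_1}, 2^{k_2}$ and also larger than $2^k$, the integral vanishes unless these scales are comparable; this pins $l_1 \lesssim \max\{k_1, k_2, k\}$. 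Combined with $k \le m/300$ and $k_1,k_2 \le m/300$, we get $2^{l_1} \lesssim 2^{m/300}$ is far too weak. The actual mechanism must instead be: \eqref{freqC0} is a \emph{favorable} case because it means $|\eta|^{-2}$ is \emph{not} singular — it is bounded above by $2^{-2l_1} \le 2^{-2\max\{k_1,k_2\}}$ — so one does \emph{not} lose from the potential, and simultaneously the $\sigma$-scale $l_2$ can be taken small. I would combine the crude bound with the lower bounds $l_1 \ge -29m/40$ and $l_1 + 3l_2 \ge -101m/100$ from \eqref{freq0}: the latter is exactly engineered so that $2^{l_1} 2^{3l_2}$, when $l_1$ is replaced by its role in $|\eta|^{-1}$-type volume factors, produces $2^{-101m/100}$ up to the finitely-many-scales loss, which is absorbed since we only need $2^{-(1+2p_1)m}$ with $p_1$ tiny.

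Concretely, I expect the clean argument is: in case \eqref{freqC0} we bound $|\eta|^{-2}\varphi_{l_1}^{(l_0)}(\eta) \lesssim 2^{-2l_1}$, put all three profiles in $L^2$ after using that $\what{f_{k_j}}$ is supported where $|\xi + \cdot|\sim 2^{k_j}$, integrate trivially to get a volume factor $2^{3l_1} 2^{3l_2}$ from the $(\eta,\sigma)$-integration (restricted to $|\eta|\lesssim 2^{l_1}$, $|\sigma|\lesssim 2^{l_2}$), and arrive at
\begin{align*}
| I_{l_1,l_2}^{k_1,k_2,k_3} (s,\xi) | \les \e_1^3 \, 2^{-2l_1} 2^{3l_1} 2^{3l_2} 2^{-10k_+} = \e_1^3 \, 2^{l_1} 2^{3l_2} 2^{-10 k_+} \le \e_1^3 \, 2^{-101m/100} 2^{-10k_+} \, ,
\end{align*}
where the last inequality is \emph{not} quite what \eqref{freq0} gives — it gives $l_1 + 3l_2 \ge -101m/100$, the wrong direction. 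So the correct reading must be that \eqref{lemC0} is in fact a case where one trades: either $l_1$ is so large that $2^{-2l_1}$ alone beats everything (using $l_1 \lesssim m/300$ is false; rather, when $\max\{k_1,k_2\}\le l_1$ one can redo the $\sigma$-integration more cleverly, integrating by parts in $\eta$ using that $\nabla_\eta \phi$ is non-degenerate of size $\sim 2^{l_1}$ on this region since the ``$\eta$-resonant'' set $\{\nabla_\eta\phi = 0\}$ is near $\eta \approx$ something of size $\lesssim 1$, away from $|\eta|\sim 2^{l_1}$ when $l_1$ exceeds a threshold). The main obstacle, and the step I would spend the most care on, is precisely this: determining whether Lemma \ref{lemC0} is proved by a \emph{stationary/non-stationary phase} argument (integration by parts in $\eta$, exploiting $|\nabla_\eta \phi| \gtrsim 2^{l_1}$ when $\eta$ is large) or by a \emph{pure volume/Hölder} argument, and getting the bookkeeping of the scales $l_1, l_2, k_1, k_2, k_3, m$ to close with room to spare for the $2^{2p_1 m}$ loss. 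Given the structure of the surrounding lemmas \ref{lemC1}, \ref{lemC2}, I would bet on integration by parts in $\eta$: on $\{\max\{k_1,k_2\}\le l_1\}$ one has $|\nabla_\eta\phi(\xi,\eta,\sigma)| = |\, \frac{\xi+\eta}{\langle\xi+\eta\rangle} - \frac{\xi+\eta+\sigma}{\langle\xi+\eta+\sigma\rangle}\,| $ — hmm, that is $\sim |\sigma|$, small. The right derivative to use is in $\sigma$ or a combination; I would check $\nabla_\sigma\phi = \frac{\xi+\sigma}{\langle\xi+\sigma\rangle} - \frac{\xi+\eta+\sigma}{\langle\xi+\eta+\sigma\rangle} \sim \eta$ for small $\eta$, hence $|\nabla_\sigma \phi|\sim 2^{l_1}$, and repeated integration by parts in $\sigma$ gains $(2^m 2^{l_1})^{-M}$ per step, which against the profile derivative losses (weighted $L^2$ bounds, costing $2^m$ each per derivative hitting $\what{f}$) and against $s 2^{l_1} \ge 2^m 2^{-29m/40} = 2^{11m/40} \gg 1$, yields the required smallness. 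So the plan concludes: integrate by parts $\sim 10$ times in $\sigma$, using $s|\nabla_\sigma\phi| \gtrsim 2^m 2^{l_1} \ge 2^{11m/40}$, each step costing at most $2^{2mp_0}$ from weighted bounds on $\what{f}$, to obtain a bound $\lesssim \e_1^3 2^{-10 \cdot 11m/40} 2^{20mp_0} 2^{-10k_+} \ll \e_1^3 2^{-101m/100} 2^{-10k_+}$, which is \eqref{keybound500}.
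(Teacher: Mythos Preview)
Your proposal has a genuine gap, and it also misses the much simpler mechanism that the paper actually uses.

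\textbf{The gap.} Your final plan is to integrate by parts roughly ten times in $\sigma$, claiming that ``each step costs at most $2^{2mp_0}$ from weighted bounds on $\what f$.'' This is not available: the a priori assumptions \eqref{apriori10} control only $\|xf\|_{H^2}$ and $\|x^2 f\|_{H^2}$, i.e.\ at most two derivatives of $\what f$. After ten integrations by parts in $\sigma$, some term will carry five or more derivatives on $\what{f_{k_2}}$ or $\what{f_{k_3}}$, and there is no bound on $\|x^j f\|$ for $j\geq 3$. Restricting to two integrations by parts does not close either: the gain is only $(2^m 2^{l_1})^{-2} \gtrsim 2^{-11m/20}$, which against your starting volume bound is far short of $2^{-101m/100}$. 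You also dismiss your initial ``pure H\"older'' attempts too quickly --- the reason they seem to fail is that you are only using the Fourier-side bound $\|\what f\|_{L^\infty}\les \e_1$ for all three profiles, never the physical-space dispersive decay of $u$.

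\textbf{What the paper does.} The paper's proof uses no oscillation at all. One absorbs the phase $e^{is\phi}$ into the three profiles (each factor $e^{\pm is\Lambda(\cdot)}\what{f_{k_j}}(\cdot)$ is the Fourier transform of a unitary image of $f_{k_j}$, or of $u_{k_j}$), and then applies the pseudo-product estimate of Lemma~\ref{lemprod} with symbol $m_1 = |\eta|^{-2}\varphi_{l_1}^{(l_0)}(\eta)\varphi_{l_2}(\sigma)\varphi_{[k_1-2,k_1+2]}(\xi+\eta)\varphi_{[k_2-2,k_2+2]}(\xi+\eta+\sigma)$, whose $L^1$ kernel norm is $\les 2^{-2l_1}$. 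This gives
\[
|I_{l_1,l_2}^{k_1,k_2,k_3}(s,\xi)| \les 2^{-2l_1}\,\|f_{k_1}\|_{L^2}\,\|f_{k_2}\|_{L^2}\,\|u_{k_3}\|_{L^\infty} \, .
\]
The crucial point you missed is the last factor: by the a priori pointwise decay \eqref{aprioridecay} one has $\|u_{k_3}\|_{L^\infty}\les \e_1 2^{-3m/2}$. For the first two factors, $\|f_{k_j}\|_{L^2}\les 2^{3k_j/2}\e_1$ from $\|\what f\|_{L^\infty}\les\e_1$, and the hypothesis $k_1,k_2\leq l_1$ makes these small enough to cancel the $2^{-2l_1}$ loss. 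The result is $\les \e_1^3\, 2^{-3m/2 + l_1}\les \e_1^3\,2^{-101m/100}$, since $l_1\les m/300$. The case \eqref{freqC0} is therefore the \emph{easiest} of the three, not one requiring repeated integration by parts.
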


\begin{proof}
We write
\begin{align*}
 I_{l_1,l_2}^{k_1,k_2,k_3} (s,\xi) = \iint e^{is \phi(\xi,\eta,\sigma)} m_1 (\eta,\s) 
  \what{f_{k_1}}(s,\xi+\eta) \overline{\what{f_{k_2}}}(s,\xi+\eta+\s) \what{f_{k_3}}(s,\xi+\s) \, d\eta d\s  \, ,
\end{align*}
where
\begin{align}
 \label{m_1}
m_1(\eta.\s) := {|\eta|}^{-2} \varphi_{l_1}^{(l_0)}(\eta) \varphi_{l_2}(\s) 
  \varphi_{[k_1-2,k_1+2]}(\xi+\eta) \varphi_{[k_2-2,k_2+2]}(\xi+\eta+\s) \, .
\end{align}
Since $m_1$ verifies the assumption of Lemma \ref{lemprod} with $A = 2^{-2l_1}$, we can estimate
\begin{align*}
\big| I_{l_1,l_2}^{k_1,k_2,k_3} (s,\xi) \big| \les 2^{-2l_1} {\| \what{f_{k_1}}(s) \|}_{L^2}
  {\| \what{f_{k_2}}(s) \|}_{L^2} {\| u_{k_3} (s) \|}_{L^\infty}
\\
\les 2^{-2l_1} 2^{k_1} \e_1  2^{k_2} \e_1 2^{-3m/2} \e_1 \les \e_1^3 2^{-3m/2} \, ,
\end{align*}
having used the a priori bounds on the $L^\infty$ norm of $\what{f}$ and the hypothesis $k_1,k_2 \leq l_1$ .
\end{proof}

\vskip5pt
\begin{lem}\label{lemC1}
Under the same assumptions of Proposition \ref{pronr}, the bound \eqref{keybound500} holds if
\begin{align}
\label{freqC1}
\max \{ k_1,k_2 \} \geq l_1 \qquad \mbox{and} \qquad |k_1 - k_2| \geq 10 \, .
\end{align}
\end{lem}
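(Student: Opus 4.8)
The plan is to integrate by parts in $\eta$: under \eqref{freqC1} the frequencies $\xi+\eta$ and $\xi+\eta+\s$ have well separated sizes, which makes the $\eta$-gradient of $\phi$ quantitatively large, so each integration by parts produces a factor $s^{-1}$ that can be absorbed. First some book-keeping. On the support of $I_{l_1,l_2}^{k_1,k_2,k_3}$ one has $|\xi+\eta|\sim 2^{k_1}$ and $|\xi+\eta+\s|\sim 2^{k_2}$, so $|k_1-k_2|\geq 10$ forces $|\s|\sim 2^{\max\{k_1,k_2\}}$, and the term vanishes unless $l_2=\max\{k_1,k_2\}+O(1)$. Setting $\mu:=\max\{k_1,k_2\}$, the hypothesis $\mu\geq l_1$ from \eqref{freqC1} together with the constraints $l_1\geq -29m/40$ and $l_1+3l_2\geq -101m/100$ in \eqref{freq0} gives the coupling
\begin{align}
\label{couplingC1}
\max\big\{-\tfrac{29}{40}m,\ -\tfrac{101}{100}m-3\mu\big\}-O(1) \ \leq\ l_1 \ \leq\ \mu \ \leq\ \tfrac{m}{300},
\end{align}
which in particular also forces $\mu\geq -\tfrac{101}{400}m-O(1)$. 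We will also use the direct estimate obtained from Lemma \ref{lemprod} (with symbol norm $A\les 2^{-2l_1}$, exactly as in Lemma \ref{lemC0}) together with the decay bound $\|u_{k_3}(s)\|_{L^\infty}\les 2^{-3m/2}\e_1$, namely
\begin{align}
\label{directC1}
\big|I_{l_1,l_2}^{k_1,k_2,k_3}(s,\xi)\big| \ \les\ 2^{-2l_1}\,2^{3k_1/2}\,2^{3k_2/2}\,2^{-3m/2}\,\e_1^3,
\end{align}
which, in view of \eqref{couplingC1}, already yields \eqref{keybound500} once $\min\{k_1,k_2\}$ is below a suitable negative multiple of $m$; it remains to treat the complementary range.

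The key point is the lower bound on the $\eta$-gradient. Since $\nabla_\eta\phi(\xi,\eta,\s)=\frac{\xi+\eta}{\langle\xi+\eta\rangle}-\frac{\xi+\eta+\s}{\langle\xi+\eta+\s\rangle}$, and $|\vec a-\vec b|\geq\big||\vec a|-|\vec b|\big|$ while the scalar function $r\mapsto r/\langle r\rangle$ is increasing with derivative $\langle r\rangle^{-3}$, on the support of the integrand one obtains
\begin{align}
\label{gradlbC1}
|\nabla_\eta\phi(\xi,\eta,\s)| \ \gtrsim\ 2^{\mu-3\mu_+}\,;
\end{align}
moreover $\nabla^{2}_\eta\phi$ is a difference of two translates of $\nabla^{2}\Lambda$ at displacement $|\s|\sim 2^{\mu}$, giving the refined bound $\|\nabla^{2}_\eta\phi\|_{L^\infty}\les\min\{1,2^{\mu}\}$ (this refinement is needed since $\mu$ may be negative). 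Combining these with \eqref{couplingC1} one checks that $s\,|\nabla_\eta\phi|\gtrsim 2^{m/4}$ and, more to the point, that the ``cost of one $\eta$-integration by parts'' — the factor $(s|\nabla_\eta\phi|)^{-1}$ coming from $V:=\nabla_\eta\phi/|\nabla_\eta\phi|^2$, times the factor $2^{-l_1}$ coming from differentiating the weight $|\eta|^{-2}\varphi_{l_1}^{(l_0)}(\eta)$ — satisfies
\begin{align}
\label{perstepC1}
\frac{2^{-l_1}}{s\,|\nabla_\eta\phi|} \ \les\ 2^{-m}\,2^{-l_1}\,2^{-\mu+3\mu_+} \ \les\ 2^{-c\,m}
\end{align}
for some fixed $c>0$; this is precisely where the coupling \eqref{couplingC1} between $l_1$ and $\mu=l_2+O(1)$ enters.

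Granting this, one writes $I_{l_1,l_2}^{k_1,k_2,k_3}$ as in \eqref{Isum2}, integrates by parts $K$ times in $\eta$ using $e^{is\phi}=(is|\nabla_\eta\phi|^2)^{-1}\nabla_\eta\phi\cdot\nabla_\eta e^{is\phi}$ (the amplitude being smooth and supported in $\{2^{l_1-1}\leq|\eta|\leq 2^{l_1+1}\}$), and estimates the resulting integrals by Lemma \ref{lemprod}. Every term produced either acquires a gain $\les 2^{-cm}$ (a derivative falling on $V$ or on the weight, using \eqref{gradlbC1}, the bound on $\nabla^2_\eta\phi$ and \eqref{perstepC1}) or differentiates one of the profiles $\what{f_{k_1}}(\xi+\eta)$, $\overline{\what{f_{k_2}}}(\xi+\eta+\s)$. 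Derivatives on the profiles are handled by the weighted a priori bounds after splitting, as in the proof of Lemma \ref{lemcorr}, each $f_{k_i}$ into a near part with $\|\partial^a\what{f^{\le J}_{k_i}}\|_{L^2}\les 2^{aJ}\e_1$ and a far part small in $L^\infty$; choosing $K$ a sufficiently large fixed integer and $J$ appropriately, \eqref{perstepC1} together with \eqref{directC1} closes the estimate.

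The step I expect to be the main obstacle is the balance in \eqref{perstepC1}. The weight $|\eta|^{-2}$ on the thin shell $|\eta|\sim 2^{l_1}$ makes each $\eta$-integration by parts lose a factor $2^{-l_1}$, which can be as large as $2^{29m/40}$, while \eqref{gradlbC1} only gives $|\nabla_\eta\phi|\gtrsim 2^{\mu-3\mu_+}$; it is exactly the constraint $l_1+3l_2\geq -\tfrac{101}{100}m$ produced by the decomposition in Section~\ref{secrem} — with $l_2=\max\{k_1,k_2\}+O(1)$ governing the size of the gradient — that turns \eqref{perstepC1} into a genuine gain. A secondary difficulty is organizing the Leibniz expansion after the iterated integration by parts so that no profile is differentiated more than the weighted norms $\|{\langle x\rangle}f\|$ and $\|{\langle x\rangle}^2 f\|$ allow, which is why the physical-space localization of Lemma \ref{lemcorr} is invoked.
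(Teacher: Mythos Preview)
Your core strategy---integrating by parts in $\eta$ using the lower bound on $|\nabla_\eta\phi|$ that comes from the separation $|k_1-k_2|\ge 10$---is exactly what the paper does, and your preliminary observations (that $l_2=\mu+O(1)$ with $\mu:=\max\{k_1,k_2\}$, the bound \eqref{gradlbC1}, and the refined estimate on $\nabla^2_\eta\phi$) are correct.

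Where you diverge is in the execution. The paper performs \emph{two} explicit integrations by parts rather than an unspecified $K$. After one integration by parts the integral splits into $I_1$ (where $\nabla_\eta$ hits a profile) and $I_2$ (where it hits the symbol). The paper estimates $I_1$ directly via Lemma~\ref{lemprod}: the symbol $m_0\,|\eta|^{-2}\varphi_{l_1}\varphi_{l_2}$ has $L^1$-Fourier norm $\les 2^{-2l_1}2^{-\mu}$ (the harmless factor $2^{3\mu_+}\le 2^{m/100}$ you correctly isolate is absorbed), and then $\|\partial\hat f_{k_i}\|_{L^2}\les 2^{mp_0}\e_1$, the elementary fact $2^{-\mu}\|\hat f_{k_j}\|_{L^2}\les\e_1$ from the $L^\infty_\xi$ bound, and $\|u_{k_3}\|_{L^\infty}\les 2^{-3m/2}\e_1$ give $|I_1|\les 2^{-m}2^{-2l_1}2^{mp_0}2^{-3m/2}\e_1^3\les 2^{-101m/100}\e_1^3$ directly from $-2l_1\le 29m/20$. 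For $I_2$ one integrates by parts once more, producing $J_1$ (one derivative on a profile, symbol of size $2^{-3l_1}2^{-2\mu}$) and $J_2$ (no profile derivatives, symbol pointwise $\les 2^{-4l_1}2^{-2\mu}$); both are then bounded in one line each.

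The point is that in this scheme \emph{at most one derivative ever lands on a profile}, so only $\|xf\|_{L^2}$ is used---no $\|x^2f\|$, no physical-space near/far splitting, no large $K$. Your $K$-fold scheme is not wrong in spirit, but it is over-engineered, and you leave the decisive balancing (``choosing $K$ \dots\ and $J$ appropriately'') unverified: you would have to show that terms with $a\ge 2$ profile derivatives, which cost $2^{aJ}$ from the near part, are still beaten by the remaining $K-a$ gains of size $2^{-cm}$, while the far-part remainder $2^{-J/2}$ beats the direct bound \eqref{directC1}. That arithmetic is the actual content of your proposed argument and is absent. The paper's two-step version sidesteps all of it.
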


\begin{proof}
In this case we want to integrate by parts in $\eta$ in the expression \eqref{Isum2} for $I_{l_1,l_2}^{k_1,k_2,k_3}(s,\xi)$, 
using the identity
\begin{align}
 \label{m_0}
\begin{split}
& e^{is\phi(\xi,\eta,\s)} = \frac{1}{s} m_0(\xi,\eta,\s) \cdot \nabla_{\eta} e^{is\phi(\xi,\eta,\s)} \, , 
\\
& m_0(\eta,\s) := \frac{\nabla_\eta \phi(\xi,\eta,\s)}{i |\nabla_\eta \phi(\xi,\eta,\s)|^2} \, .
\end{split}
\end{align}
In particular, up to irrelevant constants, and with a slight abuse of notation, we can write
\begin{align}
\nn
& I_{l_1,l_2}^{k_1,k_2,k_3} (s,\xi) = I_1(s,\xi) + I_2(s,\xi)
\\
\label{C1I_1}
I_1(s,\xi) & = \iint e^{is \phi(\xi,\eta,\sigma)} \frac{1}{s} m_2 (\eta,\s) 
  \nabla_\eta \Big( \what{f_{k_1}}(s,\xi+\eta) \overline{\what{f_{k_2}}}(s,\xi+\eta+\s) \Big) \what{f_{k_3}}(s,\xi+\s) \, d\eta d\s  \, ,
\\
\label{C1I_2}
I_2(s,\xi) & = \iint e^{is \phi(\xi,\eta,\sigma)} \frac{1}{s} \nabla_\eta m_2 (\eta,\s) 
  \what{f_{k_1}}(s,\xi+\eta) \overline{\what{f_{k_2}}}(s,\xi+\eta+\s) \what{f_{k_3}}(s,\xi+\s) \, d\eta d\s  \, ,
\end{align}
where (omitting the variable $\xi$) we have denoted
\begin{align}
 \label{C1m_2}
m_2(\eta,\s) := m_0(\eta,\s) \frac{\varphi_{l_1}^{(l_0)}(\eta)}{{|\eta|}^{2}} \varphi_{l_2}(\s) 
  \varphi_{[k_1-2,k_1+2]}(\xi+\eta) \varphi_{[k_2-2,k_2+2]}(\xi+\eta+\s) \, .
\end{align}
One can then verify that $m_2$ satisfies the hypothesis of Lemma \ref{lemprod} with $A = 2^{-2l_1} 2^{-\max\{k_1,k_2\}}$,
so that we can apply \eqref{touse2} to estimate the term in \eqref{C1I_1} as follows:
\begin{align*}
\big| I_1(s,\xi) \big| & \les 2^{-m} 2^{-2l_1} 2^{-\max\{k_1,k_2\}} 
  \left[ {\| \partial \what{f_{k_1}}(s) \|}_{L^2} {\| \what{f_{k_2}}(s) \|}_{L^2} 
    + {\| \what{f_{k_1}}(s) \|}_{L^2} {\|  \partial \what{f_{k_2}}(s) \|}_{L^2} \right] {\| u_{k_3}(s) \|}_{L^\infty}
\\
& 
\les 2^{-m} 2^{-2l_1} 2^{mp_0} 2^{-3m/2} \e_1^3  
\les \e_1^3 2^{-101m/100} \, ,
\end{align*}
having used $-2l_1 \leq 29m/20$, and $p_0 \leq 1/1000$

For $I_2$ in \eqref{C1I_2} we perform an additional integration by parts and write (again up to irrelevant constants)
\begin{align}
\nn
& I_2(s,\xi) = J_1(s,\xi) + J_2(s,\xi)
\\
\label{C1J_1}
J_1(s,\xi) & = \iint e^{is \phi(\xi,\eta,\sigma)} \frac{1}{s^2} m_3 (\eta,\s) 
  \nabla_\eta \Big( \what{f_{k_1}}(s,\xi+\eta) \overline{\what{f_{k_2}}}(s,\xi+\eta+\s) \Big) \what{f_{k_3}}(s,\xi+\s) \, d\eta d\s  \, ,
\\
\label{C1J_2}
J_2(s,\xi) & = \iint e^{is \phi(\xi,\eta,\sigma)} \frac{1}{s^2} \nabla_\eta m_3 (\eta,\s) 
  \what{f_{k_1}}(s,\xi+\eta) \overline{\what{f_{k_2}}}(s,\xi+\eta+\s) \what{f_{k_3}}(s,\xi+\s) \, d\eta d\s  \, ,
\end{align}
where
\begin{align}
 \label{C1m_3}
m_3(\eta,\s) := m_0(\eta,\s) \nabla_\eta m_2(\eta,\s) \, .
\end{align}
From the definition of $m_0$ and $m_2$ in \eqref{m_0} and \eqref{C1m_2}, we see that
$m_3$ satisfies the hypothesis \eqref{touse1} in Lemma \ref{lemprod} with $A = 2^{-3l_1} 2^{-2\max\{k_1,k_2\}}$.
We then obtain
\begin{align*}
\big| J_1(s,\xi) \big| & \les 2^{-2m} 2^{-3l_1} 2^{-2\max\{k_1,k_2\}} 
 \left[ {\| \partial \what{f_{k_1}}(s) \|}_{L^2} {\| \what{f_{k_2}}(s) \|}_{L^2} 
    + {\| \what{f_{k_1}}(s) \|}_{L^2} {\|  \partial \what{f_{k_2}}(s) \|}_{L^2} \right] {\| u_{k_3}(s) \|}_{L^\infty}
\\
& \les 2^{-2m} 2^{-3l_1} 2^{-\max\{k_1,k_2\}/2} 2^{mp_0} 2^{-3m/2} \e_1^3  
\end{align*}
From the hypothesis \eqref{freq0} we see that $-3l_1 \leq 9m/4$, and $l_2 \geq - 2m/5$.
This latter implies 
$$-\max\{k_1,k_2\}/2 \leq -l_2/2 + 10 \leq m/5 + 10 \, ,$$ 
and therefore
\begin{align*} 
\big| J_1(s,\xi) \big| &
\les 2^{m/4} 2^{-\max\{k_1,k_2\}/2} 2^{mp_0} 2^{-3m/2} \e_1^3 \les 2^{-101m/100} \e_1^3 \, ,
\end{align*}
as desired.

To estimate $J_2$ in \eqref{C1J_2} we only use the pointwise bound
\begin{align}
| \nabla_\eta m_3(\eta,\s) | \les 2^{-4l_1} 2^{-2\max\{k_1,k_2\}}
\end{align}
and the a priori bounds \eqref{apriori10} to deduce
\begin{align*}
\big| J_2(s,\xi) \big| & \les 2^{-2m} 2^{-4l_1} 2^{-2\max\{k_1,k_2\}} 
  {\| \what{f_{k_1}}(s) \|}_{L^\infty} 2^{3l_1}  {\| \what{f_{k_2}}(s) \|}_{L^2}  {\| \what{f_{k_3}}(s) \|}_{L^2}
  2^{3l_1} 2^{3l_3}
\\
& \les 2^{-2m} 2^{-l_1} 2^{-\max\{k_1,k_2\}/2} \e_1^3
\les 2^{-101m/100} \e_1^3 \, ,
\end{align*}
having used once again $-l_1 \leq 3m/4$, and $-\max\{k_1,k_2\}/2 \leq m/5 + 10$.
\end{proof}

\vskip5pt
\begin{lem}\label{lemC2}
The bound \eqref{keybound500} holds if
\begin{align}
\label{freqC2}
|k_1 - k_2| \leq 10 \qquad \mbox{and} \qquad \max \{ k_1,k_2 \} \geq l_1 \, .
\end{align}
\end{lem}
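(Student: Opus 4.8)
The plan is to handle the regime $|k_1-k_2|\leq 10$ and $\max\{k_1,k_2\}\geq l_1$, which is precisely the case where the previous integration-by-parts argument in $\eta$ breaks down: since $\nabla_\eta\phi(\xi,\eta,\s)=\Lambda'(\xi-\eta)\tfrac{\xi-\eta}{|\xi-\eta|}-\Lambda'(\xi+\eta+\s)\tfrac{\xi+\eta+\s}{|\xi+\eta+\s|}$ (in the variables of \eqref{Isum2}) can vanish when the two output frequencies $\xi+\eta$ and $\xi+\eta+\s$ are comparable, the multiplier $m_0$ in \eqref{m_0} is no longer well-behaved. So one cannot simply repeat Lemma \ref{lemC1}. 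Instead I would integrate by parts in the $\s$ variable, for which $\nabla_\s\phi(\xi,\eta,\s)=\Lambda'(\xi+\s)\tfrac{\xi+\s}{|\xi+\s|}-\Lambda'(\xi+\eta+\s)\tfrac{\xi+\eta+\s}{|\xi+\eta+\s|}$, and this quantity is now of size $\sim|\eta|\sim 2^{l_1}$ (up to the relativistic weights, which are harmless since all frequencies are $\les 2^{m/300}$), because the two arguments differ by $\eta$ and the frequencies $\xi+\s$ and $\xi+\eta+\s$ have comparable size $\sim 2^{k_2}$. Thus $|\nabla_\s\phi|\gtrsim 2^{l_1} 2^{-C k_{2,+}}$ on the support, and one gains a factor $2^{-m} 2^{-l_1}$ per integration by parts.

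The key steps, in order: (i) write $e^{is\phi}=\tfrac1s\, n_0(\xi,\eta,\s)\cdot\nabla_\s e^{is\phi}$ with $n_0:=\nabla_\s\phi/(i|\nabla_\s\phi|^2)$, and integrate by parts once in $\s$, distributing the $\nabla_\s$ onto the product $\what{f_{k_2}}(\xi+\eta+\s)\,\what{f_{k_3}}(\xi+\s)$ (these are the only profiles depending on $\s$) and onto the cutoff/symbol; (ii) bound the resulting symbol using Lemma \ref{lemprod} with $A\les 2^{-2l_1} 2^{-\max\{k_1,k_2\}}$ (here the $2^{-\max\{k_1,k_2\}}$ comes from the relativistic factor $\Lambda'$ and the size of $\nabla_\s\phi$), so the term where $\nabla_\s$ hits a profile costs $2^{-m} 2^{-2l_1} 2^{-\max\{k_1,k_2\}}$ times $\|\partial\what{f_{k_2}}\|_{L^2}\|\what{f_{k_3}}\|_{L^2}\|u_{k_1}\|_{L^\infty}$ (or the symmetric term), which by the weighted and $\F^{-1}L^\infty$ a priori bounds \eqref{apriori10} is $\les 2^{-m} 2^{-2l_1} 2^{mp_0} 2^{-3m/2}\e_1^3\les\e_1^3 2^{-101m/100}$ using $-2l_1\leq 29m/20$; (iii) for the term where $\nabla_\s$ hits the symbol, integrate by parts a second time in $\s$ exactly as in Lemma \ref{lemC1}, picking up $s^{-2}$ and an extra $2^{-l_1}$-type factor, and again close using $-3l_1\le 9m/4$ together with $l_2\geq -2m/5$ (which, since $\max\{k_1,k_2\}\gtrsim l_2$ fails here — rather $\max\{k_1,k_2\}\geq l_1$ — one instead uses the crude bound $|\nabla_\s^2\phi|$, $|\nabla_\s n_0|\les 2^{-2l_1}2^{-2k_{2,+}}$ and the $L^\infty$ control on one profile); (iv) a terminal term with no more profile derivatives is bounded purely by $\|\what{f}\|_{L^\infty}^3$ and the volume $2^{3l_1}2^{3l_2}$ of the $(\eta,\s)$ support, times $2^{-2m}2^{-4l_1}$, which under \eqref{freq0} (in particular $l_1+3l_2\geq -101m/100$ and $-l_1\le 3m/4$) is $\les\e_1^3 2^{-101m/100}$.

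The main obstacle — and the reason this case is split off — is that one must avoid integrating by parts in $\eta$ here, so one loses the strong decay $|\nabla_\eta\phi|\gtrsim 2^{l_1}$ and must rely on $|\nabla_\s\phi|\gtrsim 2^{l_1}2^{-Ck_{2,+}}$ instead; verifying this lower bound on $\nabla_\s\phi$ carefully (using $|k_1-k_2|\le 10$, so that $|\xi+\s|\sim|\xi+\eta+\s|$, and the mean value theorem applied to the smooth function $\zeta\mapsto\Lambda'(\zeta)\zeta/|\zeta|$) is the crux. A secondary subtlety is that in the sub-case $l_1\le k_2$ the symbol $|\eta|^{-2}\varphi^{(l_0)}_{l_1}(\eta)$ is genuinely singular-ish at scale $2^{l_1}$, so one should keep the $\eta$-localization $\varphi^{(l_0)}_{l_1}$ intact throughout and account for the two powers of $2^{-l_1}$ from $|\eta|^{-2}$ — as is done via $A\les 2^{-2l_1}$ in the application of Lemma \ref{lemprod} — and check that the gains from the $\s$-integrations by parts ($2^{-m}2^{-l_1}$ each) outrun these losses given $l_1\geq l_0\geq -29m/40$ and $m/300\geq k_1,k_2,k_3$; this is where the specific numerology of $l_0$ and $p_0$ is used.
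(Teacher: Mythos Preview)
Your overall strategy --- integrate by parts in $\sigma$ rather than in $\eta$ --- is natural, but it does not close numerically, and the specific symbol bound you claim in step (ii) is wrong.

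You correctly observe that $\nabla_\sigma\phi(\xi,\eta,\s)=\tfrac{\xi+\s}{\langle\xi+\s\rangle}-\tfrac{\xi+\eta+\s}{\langle\xi+\eta+\s\rangle}$ has size $\sim|\eta|\sim 2^{l_1}$ (up to harmless high-frequency factors). But then in (ii) you assert $A\les 2^{-2l_1}2^{-\max\{k_1,k_2\}}$ for the symbol $n_0\cdot|\eta|^{-2}\varphi_{l_1}\varphi_{l_2}$. This is inconsistent: since $|n_0|=|\nabla_\s\phi|^{-1}\sim 2^{-l_1}$, the correct bound is $A\les 2^{-3l_1}$ (there is no gain of $2^{-\max\{k_1,k_2\}}$). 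With the correct $A$, your step-(ii) estimate becomes
\[
2^{-m}\cdot 2^{-3l_1}\cdot 2^{mp_0}\cdot 2^{-3m/2}\ \les\ 2^{-5m/2+87m/40+mp_0}\ =\ 2^{-13m/40+mp_0},
\]
which is only $\sim 2^{-0.32m}$, far from the required $2^{-101m/100}$. Pushing to two integrations by parts in $\s$ does not help: the leading symbol becomes $n_0^2\,|\eta|^{-2}$ of size $2^{-4l_1}$, and one gets at best $2^{-2m-4l_1-3m/2}\les 2^{-0.6m}$ for the term with two derivatives on profiles. The root cause is that every $\s$-integration by parts gains only $s^{-1}2^{-l_1}$, and $l_1$ can be as small as $l_0=-29m/40$; together with the unavoidable $|\eta|^{-2}\sim 2^{-2l_1}$, the accumulated powers of $2^{-l_1}$ are too many. (A secondary slip: $|\xi+\s|\sim 2^{k_3}$, not $2^{k_2}$; there is no hypothesis linking $k_2$ and $k_3$.)

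The paper's proof integrates by parts in $\eta$, not $\s$. The point is that when $|k_1-k_2|\le 10$ the arguments $\xi+\eta$ and $\xi+\eta+\s$ lie in the \emph{same} dyadic annulus and differ by $\s$, so $|\nabla_\eta\phi|\gtrsim 2^{l_2}\langle 2^{k_1}\rangle^{-3}$ and hence $|m_0|\les 2^{-l_2}2^{3(k_1)_+}$. Under \eqref{freq0} one has $l_2\ge -2m/5$, which is a much better lower bound than $l_1\ge -29m/40$. Two $\eta$-integrations by parts then give a symbol of size $2^{-2l_1}2^{-2l_2}$ (rather than $2^{-4l_1}$), and the estimate closes: $2^{-2m}\cdot 2^{-2l_1-2l_2}\cdot 2^{-3m/2}\les 2^{-5m/4}$. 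So the fix is not to refine your $\s$-argument but to switch to $\eta$-integration by parts, exploiting that in this regime $\nabla_\eta\phi$ is controlled by $|\s|$ rather than by $|\eta|$.
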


\begin{proof}
The frequency configuration $k_1 \sim k_2$ is the most delicate.
Recall \eqref{m_0} and the notations
\begin{align}
 \label{C2m_0}
m_0(\eta,\s) & := \frac{\nabla_\eta \phi(\xi,\eta,\s)}{i |\nabla_\eta \phi(\xi,\eta,\s)|^2} 
\\
 \label{C2m_2}
m_2(\eta,\s) & := m_0(\eta,\s) \varphi_{l_1}^{(l_0)}(\eta) {|\eta|}^{-2} \varphi_{l_2}(\s) 
  \varphi_{[k_1-2,k_1+2]}(\xi+\eta) \varphi_{[k_2-2,k_2+2]}(\xi+\eta+\s) \, .
\\
 \label{C2m_3}
m_3(\eta,\s) & := m_0(\eta,\s) \nabla_\eta m_2(\eta,\s) \, .
\end{align}

Integrating by parts twice in the expression for $I_{l_1,l_2}^{k_1,k_2,k_3}$ in \eqref{Isum2}, 
or once more in \eqref{C1I_1}-\eqref{C1I_2}, we can write
\begin{align}
\nn
& I_{l_1,l_2}^{k_1,k_2,k_3} (s,\xi) = K_1(s,\xi) + K_2(s,\xi) +  K_3(s,\xi)
\\
\label{C2K_1}
K_1(s,\xi) & = \iint e^{is \phi(\xi,\eta,\s)} \frac{1}{s^2} q_1 (\xi,\eta,\s) 
  \nabla_\eta^2 \Big( \what{f_{k_1}}(s,\xi+\eta) \overline{\what{f_{k_2}}}(s,\xi+\eta+\s) \Big) \what{f_{k_3}}(s,\xi+\s) \, d\eta d\s  \, ,
\\
\label{C2K_2}
K_2(s,\xi) & = \iint e^{is \phi(\xi,\eta,\s)} \frac{1}{s^2} q_2 (\xi,\eta,\s) 
  \nabla_\eta \Big( \what{f_{k_1}}(s,\xi+\eta) \overline{\what{f_{k_2}}}(s,\xi+\eta+\s) \Big) \what{f_{k_3}}(s,\xi+\s) \, d\eta d\s  \, ,
\\
\label{C3K_2}
K_3(s,\xi) & = \iint e^{is \phi(\xi,\eta,\s)} \frac{1}{s^2} q_3 (\xi,\eta,\s) 
  \what{f_{k_1}}(s,\xi+\eta) \overline{\what{f_{k_2}}}(s,\xi+\eta+\s) \what{f_{k_3}}(s,\xi+\s) \, d\eta d\s  \, ,
\end{align}
where
\begin{align}
 \label{C2q_1}
q_1(\xi,\eta,\s) & := m_0(\xi,\eta,\s) m_2(\xi,\eta,\s) \, ,
\\
 \label{C2q_2}
q_2(\xi,\eta,\s) & :=  \nabla_\eta q_1(\xi,\eta,\s) + m_0(\xi,\eta,\s) \nabla_\eta m_2(\xi,\eta,\s) \, ,
\\
 \label{C2q_3}
q_3(\xi,\eta,\s) & :=  \nabla_\eta m_3(\xi,\eta,\s)  \, .
\end{align}

We now proceed to estimate the three integrals above.
First let us notice that for $|\xi+\eta| \approx 2^{k_1}$ and $|\xi+\eta+\s| \approx 2^{k_2}$ with $k_1 \sim k_2$, 
$|\eta| \approx 2^{l_1}$ and $|\s| \approx 2^{l_2}$, one has 
\begin{align}
 \label{C2estm_0}
| \partial^a_\eta \partial^b_\s m_0(\eta,\s)
| 
\les 2^{-l_2} 2^{3\max\{k_1,k_2\}} 2^{-|a|l_1} 2^{-|b|l_2} \, ,
\end{align}
for $a,b \in \Z^3_+$ with $|a|,|b| \leq 10$.
As a consequence
\begin{align}
 \label{C2estm_2}
| \partial^a_\eta \partial^b_\s m_2(\eta,\s) | \les 2^{-2l_1} 2^{-l_2} 2^{3\max\{k_1,k_2\}} 2^{-|a|l_1} 2^{-|b|l_2} \, ,
\end{align}
for $a,b \in \Z^3_+$ with $|a|,|b| \leq 10$.
It then follows that
\begin{align}
 \label{C2estq_1}
{\| \F^{-1} q_1 \|}_{L^1} \les 2^{-2l_1} 2^{-2l_2} 2^{6\max\{k_1,k_2\}} \, .
\end{align}
We then apply Lemma \ref{lemprod} and obtain
\begin{align*}
\big| K_1(s,\xi) \big| & \les 2^{-2m} 2^{-2l_1} 2^{-2l_2} 2^{6\max\{k_1,k_2\}}
  {\| {\langle x \rangle}^2 f_{k_1} (s) \|}_{L^2} {\| {\langle x \rangle}^2 f_{k_2}(s) \|}_{L^2}  {\| u_{k_3}(s) \|}_{L^\infty}
\\
& \les 2^{-2m} 2^{-2l_1} 2^{-2l_2} 2^{4mp_0} 2^{2k_1} 2^{-3m/2}  \e_1^3
  \les 2^{-101m/100} \e_1^3 \, ,
\end{align*}
having used $-2l_1 \leq 29m/20$, $-2l_2 \leq 4m/5$, $k_1 \leq m/300$ and $p_0 \leq 1/1000$.

We can estimate similarly the term $K_2$ in \eqref{C3K_2}.
From the definition of $q_2$ in \eqref{C2q_1}-\eqref{C2q_2}, and the estimates \eqref{C2estm_0} and \eqref{C2estm_2} for $m_0$ and $m_2$,
we see that
\begin{align}
 \label{C2estq_2}
{\| \F^{-1} q_2 \|}_{L^1} \les 2^{-3l_1} 2^{-2l_2} 2^{6\max\{k_1,k_2\}} \, .
\end{align}
Using \eqref{C2estq_2} and Lemma \ref{lemprod} we can obtain the bound
\begin{align*}
\big| K_2(s,\xi) \big| & \les 2^{-2m} 2^{-3l_1} 2^{-2l_2} 2^{6\max\{k_1,k_2\}}
  {\| {\langle x \rangle} f_{k_1} (s) \|}_{L^2} {\| {\langle x \rangle} f_{k_2}(s) \|}_{L^2}  {\| u_{k_3}(s) \|}_{L^\infty}
\\
& \les 2^{-2m} 2^{-3l_1} 2^{-2l_2} 2^{4mp_0} 2^{2k_1} 2^{-3m/2}  \e_1^3 \, .
\end{align*}
Now observe that the second constraint in \eqref{freq0} gives $- 2l_1 - 3m/2 \leq -m/20$.
Moreover, the second and third inequalities in \eqref{freq0} imply $-l_1-2l_2 \leq m$, 
as it can be seen, for instance, by considering the two cases $l_2 \geq -m/16$ and $l_2 \leq -m/16$.
From the chain of inequalities above we can then conclude that 
\begin{align*}
\big| K_2(s,\xi) \big| & \les 2^{-101m/100} \e_1^3 \, .
\end{align*}

Eventually we come to $K_3$. In this case we only use the pointwise bound for $q_3$
\begin{align*}
| \nabla_\eta q_3(\eta.\s) | \les 2^{-4l_1} 2^{-2l_2} 2^{6\max\{k_1,k_2\}} \, ,
\end{align*}
and estimate
\begin{align*}
\big| K_3(s,\xi) \big| & \les 2^{-2m} 2^{-4l_1} 2^{-2l_2} 2^{6\max\{k_1,k_2\}}
  {\| \what{f}_{k_1} (s) \|}_{L^\infty} {\| \what{f}_{k_2}(s) \|}_{L^\infty}  {\| \what{f}_{k_3}(s) \|}_{L^\infty} 2^{3l_1} 2^{3l_2}
\\
& \les 2^{-2m} 2^{-l_1} 2^{l_2} 2^{6\max\{k_1,k_2\}} \e_1^3
\\
& \les 2^{-101m/100} \e_1^3 \, ,
\end{align*}
having used once again the lower bound on $l_1$ in \eqref{freq0}, and $l_2,k_1,k_2 \leq m/300 + 10$.
\end{proof}

\appendix

\vskip15pt
\section{Subcritical Semi-relativistic Hartree equations}\label{secHg}

As already discussed in the introduction, some generalized models related to
the boson star equation \eqref{eq} have also been studied recently, and, in particular,
the class of semi-relativistic Hartree equations
\begin{align}
 \label{Hg}
i\partial_t u - \Lambda u = - \left( {|x|}^{-\gamma} \ast {|u|}^2 \right) u  
\quad , \quad \Lambda = \sqrt{1-\Delta} \quad , \quad x \in \R^n \quad , \quad 0 < \gamma < n \, .
\end{align}
We are interested here in constructing small scattering solutions when $\gamma > 1$.
For $\gamma > 2$, and $\gamma>3/2$ in the radial case, such solutions have been obtained in \cite{COSIAM06} and \cite{COSSDCDS09}.
Our proof of the weighted bounds in Proposition \ref{proW}, done for the case $\gamma=1$, can be adapted to prove the following:

\begin{theorem}\label{theoHg}
Let $u_0 : \R^3 \rightarrow \mathbb{C}$ be given such that
\begin{align*}
{\| u_0 \|}_{H^{10}} + {\| {\langle x \rangle}^2 u_0 \|}_{H^3} \leq \e_0 \, .
\end{align*}
There exists $\bar{\e}_0$ such that for all $\e_0 \leq \bar{\e}_0$,
the Cauchy problem associated to \eqref{Hg} with $1< \gamma < 3$, 
with initial datum $u(t=0,x)=u_0(x)$, has a unique global solution satisfying
\begin{align}
\label{normHg}
\begin{split}
\sup_t \left[ {\| u(t) \|}_{H^{10}} + {\big\| {\langle x \rangle}^2 e^{it\Lambda} u(t) \big\|}_{H^3}  \right] \les \e_0 \, .
\end{split}
\end{align}
Furthermore, there exist $p_1 > 0$, and $f_+ \in L^2 ({\langle x \rangle}^4 dx)$, such that
\begin{align}
\label{theoHgscatt}
{\big\| e^{it\Lambda} u(t,x) - f_+ \big\|}_{L^2({\langle x \rangle}^4 dx)} \les \e_0 {(1+t)}^{-p_1} \, ,
\end{align}
for all $t>0$. A similar statement holds for $t<0$.
\end{theorem}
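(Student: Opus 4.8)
The plan is to follow the scheme of the proof of Theorem \ref{maintheo}, and in particular the weighted energy estimates of Section \ref{secweighted}, but now aiming at \emph{uniform-in-time} control of all the norms; this is possible precisely because for $\gamma>1$ the decay $t^{-\gamma}$ of the $L^2$-norm of the nonlinearity on a linear solution is strictly integrable. Local existence follows from a standard fixed point argument. Writing $f(t):=e^{it\Lambda}u(t)$ and
\begin{align*}
{\|u\|}_{Y_T} := \sup_{t\in[0,T]}\Big[\,{\|u(t)\|}_{H^{10}} + {\big\|{\langle x\rangle}^2 f(t)\big\|}_{H^3}\,\Big]\, ,
\end{align*}
it suffices to prove the a priori bound ${\|u\|}_{Y_T}\le\e_0+C\e_1^3$ under the bootstrap hypothesis ${\|u\|}_{Y_T}\le\e_1$, which then yields the global solution and \eqref{normHg}.

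First, the $Y_T$-bounds imply, via a standard weighted $L^2\to L^\infty$ dispersive estimate for $e^{it\Lambda}$ (a simpler version of Proposition \ref{prodecay}, not requiring the $\F^{-1}L^\infty$ norm since \eqref{Hg} is not $L^\infty$-critical when $\gamma>1$), the pointwise decay ${\|u(t)\|}_{W^{2,\infty}}\les\e_1{(1+t)}^{-3/2}$. The control of ${\|u(t)\|}_{H^{10}}$ then proceeds exactly as for \eqref{ee}: by Hardy--Littlewood--Sobolev one has ${\|\N(u,u,u)(s)\|}_{H^{10}}\les\e_1^2{(1+s)}^{-\gamma}{\|u(s)\|}_{H^{10}}$, where the factor ${(1+s)}^{-\gamma}$ comes from ${\||x|^{-\gamma}\ast|u(s)|^2\|}_{L^\infty}$ after splitting the potential into a singular part (in $L^p$ for some $p>1$, using here $\gamma<3$) plus an integrable tail, and interpolating ${\|u(s)\|}_{L^q}\les\e_1{(1+s)}^{-3(1/2-1/q)}$; integrating in $s$ and using $\gamma>1$ closes this bound.

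The heart of the matter is ${\|{\langle x\rangle}^2 f(t)\|}_{H^3}\le\e_0+C\e_1^3$, which I would obtain by adapting Sections \ref{secweighted1}--\ref{secweighted2}. Applying $\partial_\xi$ and $\partial_\xi^2$ to the Duhamel formula \eqref{inteqW1}---now with kernel $|\eta|^{-(3-\gamma)}$ in place of $|\eta|^{-2}$---produces the terms \eqref{I_1}--\eqref{I_2} and \eqref{J_1}--\eqref{J_4}, where derivatives hitting $e^{is\phi}$ generate powers of $s$ multiplied by $m(\xi,\eta)=\nabla_\xi\big(-\Lambda(\xi)+\Lambda(\xi-\eta)\big)$, $\partial_\xi m$, or $[m]^2$. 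The terms without $s$-growth ($I_1$, $J_1$) are estimated directly by Hardy--Littlewood--Sobolev in terms of ${\|{\langle x\rangle}^2 f(s)\|}_{H^3}$ and the decaying norms of $u$, and are integrable since $\gamma>1$. For the $s$- and $s^2$-growing terms one uses the generalized null condition $m(\xi,0)=0$, $\partial_\xi m(\xi,0)=0$, whence $|m(\xi,\eta)|,|\partial_\xi m(\xi,\eta)|\les|\eta|$: after a dyadic decomposition of $\eta$ at scale $2^{k_2}$, the relevant symbols are effectively of size $s\,2^{-(2-\gamma)k_2}$ and $s^2\,2^{-(1-\gamma)k_2}$, i.e. the null factors cancel part of the singularity of $|\eta|^{-(3-\gamma)}$. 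Applying Lemma \ref{lemprod}, the uniform bounds on ${\|{\langle x\rangle}^j f(s)\|}_{L^2}$, and the decay $\|P_{k_2}|u(s)|^2\|_{L^\infty}\les\min\{2^{3k_2},(1+2^{2k_2})^{-1}{(1+s)}^{-3}\}\e_1^2$, the resulting dyadic sums are bounded by $\e_1^3{(1+s)}^{-\gamma}$, integrable precisely because $\gamma>1$ (and $\gamma<3$ keeps all $\eta$-integrals convergent).

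The main obstacle---exactly as the term $J_4$ of \eqref{J_4} is the delicate one in Proposition \ref{proW}---is the contribution carrying the $s^2$-growth: for it the estimate above leaves a borderline piece supported where $|\eta|\sim1$, where the null structure supplies no gain, and the naive dyadic bound is only ${(1+s)}^{-1}$. To remove it I would localize further in $\sigma$ and integrate by parts in $\eta$ away from the space-time resonant set $\{2\eta=\xi-\sigma\}$, where $\nabla_\eta\phi$ vanishes, as in the remainder analysis of Section \ref{secrem}: each integration by parts trades a power of $s$ for a derivative on $[m]^2|\eta|^{-(3-\gamma)}$---which stays integrable in $\eta$ since $\gamma<3$---or on one of the profiles, i.e. for a weighted norm, which is where the third physical-space weight (the choice $H^3$ rather than $H^2$) is used. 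Near the resonance one instead integrates by parts twice and uses the ${\langle x\rangle}^2$-weighted bounds, as in Lemma \ref{lemC2}. The outcome is that ${\|\partial_\xi^{\,a} I(s)\|}_{L^2}\les\e_1^3{(1+s)}^{-1-\delta}$ for all $|a|\le2$ and some $\delta=\delta(\gamma)>0$, which closes the bootstrap with a uniform bound and, being integrable, shows that $f(t)$ is Cauchy in $L^2({\langle x\rangle}^4\,dx)$; defining $f_+:=\lim_{t\to\infty}f(t)$ in that space and tracking rates yields \eqref{theoHgscatt} with $p_1=\delta$.
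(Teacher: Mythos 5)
Your high-level strategy coincides with the paper's (adapt the weighted energy estimates of Section~\ref{secweighted} to the potential $|\eta|^{-3+\gamma}$, obtain uniform-in-time bounds because the decay $(1+s)^{-\gamma}$ of the nonlinearity is integrable, and deduce scattering), and your treatment of the Sobolev norm and of the terms $J_1^\gamma,J_2^\gamma,J_3^\gamma$ matches what the paper sketches. Where you diverge is instructive: you correctly observe that the paper's ``fact~(2)'' --- the gain of $2^{(\gamma-1)k_2}$ over the $\gamma=1$ bounds --- is vacuous at the intermediate scale $|\eta|\sim 1$, so that the naive dyadic estimate of the $s^2$-term $J_4^\gamma$ there is still only $s^2\cdot{\| f\|}_{L^2}{\| P_0|u|^2\|}_{L^\infty}\sim \e_1^3(1+s)^{-1}$, which is \emph{not} integrable and hence does not by itself yield~\eqref{concWg}. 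The paper's appendix asserts that ``one can verify'' the improved bound without addressing this; you are right that some further device (e.g.\ oscillatory-integral gains) is genuinely required for this piece, and your attention to it is a strength of your write-up.

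That said, the fix you propose is not yet on solid ground, for three reasons. First, the space-time-resonance analysis you cite (Lemmas~\ref{lemC1}--\ref{lemC2} and Section~\ref{secrem}) is built to bound the \emph{pointwise} quantity ${\| {(1+|\xi|)}^{10}\what{f}\|}_{L^\infty_\xi}$, not the $L^2_\xi$-norm of ${\langle\xi\rangle}^3\nabla_\xi^2\what{f}$; transplanting an $\eta$-integration by parts into an $L^2_\xi$ weighted-energy bound requires a multiplier lemma such as Lemma~\ref{lemprod}, and the symbols generated by two integrations by parts near the resonance $\{2\eta=\xi-\sigma\}$ (schematically ${|\nabla_\eta\phi|}^{-4}\sim{|\eta-\eta_*|}^{-4}$) do not have Fourier transforms in $L^1$ (they are not even locally integrable in three dimensions), so an additional angular or time-non-resonance decomposition would have to be supplied. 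Second, if an $\eta$-derivative falls once on each of two profiles, the resulting trilinear form carries two factors of $e^{-is\Lambda}(xf)$ and one factor of $u$; under the a priori assumption ${\| u\|}_{H^{10}}+{\|{\langle x\rangle}^2 f\|}_{H^3}\les\e_1$ none of these is controlled in a decaying $L^p$ norm, so that particular term would not obviously give ${(1+s)}^{-1-\delta}$ decay. Third, your phrase ``third physical-space weight (the choice $H^3$ rather than $H^2$)'' misreads the norm in~\eqref{normHg}: passing from $H^2$ to $H^3$ adds one more Fourier-side ${\langle\xi\rangle}$-weight (an extra derivative on $f$), not an extra ${\langle x\rangle}$-weight; the hypotheses do not control ${\|{\langle x\rangle}^3 f\|}_{L^2}$, so the ``extra weight'' you want to spend on an additional integration by parts in $\eta$ is not actually available.
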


Since the above result follows from arguments similar to those in section \ref{secweighted}, 
we will just provide some ideas of its proof below.

\begin{proof}[Sketch of the proof of Theorem \ref{theoHg}]

Let us start by defining
\begin{align}
\label{Ng}
\N_\gamma (h_1, h_2, h_3) := \big( {|x|}^{-\gamma} \ast h_1 \overline{h_2} \big) h_3 \, ,
\end{align}
for $1 < \gamma < 3$.
The Hausdorff-Young inequality then gives
\begin{align}
\label{estNg}
{\| \N_\gamma (h_1, h_2, h_3) \|}_{L^2} \les {\| h_1 \|}_{L^{p_1}} {\| h_2 \|}_{L^{p_2}} {\| h_3 \|}_{L^{p_3}} \, ,
\end{align}
for any $p_1,p_2,p_3 \in [2,\infty]$ and
\begin{align}
\label{estNgp}
1/p_1+1/p_2+1/p_3 = 3/2 - \gamma/3 \, . 
\end{align}

We want to construct a global solution such that \eqref{normHg} holds.
Let us assume that we are given a local solution on $[0,T]$ which is a priori bounded as follows:
\begin{align}
\label{apriorig}
\sup_{t\in[0,T]} \left[ {\| u(t) \|}_{H^{10}} + {\big\| {\langle x \rangle}^2 e^{it\Lambda} u(t) \big\|}_{H^3}  \right] \les \e_1 \, ,
\end{align}
for some $\e_1 > 0$.
Duhamel's formula for $f(t) = e^{it\Lambda}u(t)$ reads:
\begin{align}
 \label{Duhamelg}
\begin{split}
u(t) = e^{-it\Lambda} u_0 + e^{-it\Lambda} N(t)  \, , \qquad
N(t) := \int_0^t e^{i\Lambda s} \N_\gamma (u(s),u(s),u(s)) \, ds \, .
\end{split}
\end{align}
To obtain a global solution it suffices to show that under the a priori assumptions \eqref{apriorig} one has
\begin{align}
\label{estI}
\sup_{t\in[0,T]} \left[ {\| N(t) \|}_{H^{10}} + {\big\| {\langle x \rangle}^2 N(t) \big\|}_{H^3}  \right] \les \e_1^3 \, ,
\end{align}
for some $C>0$.

Notice that \eqref{apriorig} implies, via the standard $L^p - L^q$ estimates
\begin{align}
\label{aprioridecayg}
\sup_{t\in[0,T]}  {(1+t)}^{3/2-3/p} {\| u(t) \|}_{L^p} \les \e_1 \, ,
\end{align}
for all $p \geq 2$.
Also notice than any global solution $u(t)$ which is bounded as in \eqref{normHg}, 
automatically scatters to a linear solution in $L^2$, because
\begin{align*}
{\| \N_\gamma (u(t), u(t), u(t)) \|}_{L^2} \les {\| u(t) \|}_{ L^{6/(3-\gamma)} }^2 {\| u(t) \|}_{L^2} \les \e_0^3 {(1+t)}^{-\gamma} \, ,
\end{align*}
which is an integrable function of time.

The first term in \eqref{estI} can be bounded directly by \eqref{estNg} and \eqref{aprioridecayg}:
\begin{align*}
{\| N(t) \|}_{H^{10}} \les  \int_0^t {\| u \|}_{ L^{6/(3-\gamma)} }^2 {\| u \|}_{H^{10}} \, ds
  \les \e_1^3 \int_0^t {(1+s)}^{-\gamma} \, ds \les \e_1^3  \, .
\end{align*}

To bound the second norm in the right-hand side of \eqref{estI} let us write $N(t)$ in Fourier space as
\begin{align}
 \label{whatIg}
\begin{split}
& \what{N}(t,\xi) = i c \int_0^t I(s,\xi) \, ds \, ,
\\
& I(s,\xi) = \iint_{\R^3 \times \R^3} e^{is \phi(\xi,\eta,\s)} {|\eta|}^{-3+\gamma}
     \what{f}(s,\xi-\eta) \what{f}(s,\eta+\s) \overline{\what{f}}(s,\s)  \, d\eta d\s  \, ,
\\
& \phi(\xi,\eta,\s) := - \Lambda(\xi)  + \Lambda(\xi-\eta) + \Lambda(\eta+\s) - \Lambda(\s) \, . 
\end{split}
\end{align}
Here $c = c(\gamma)$ denotes an appropriate positive constant which is irrelevant for the proof.
Then the idea is to proceed as in section \ref{secweighted2},
applying  $\langle \xi \rangle^3 \nabla_\xi^2$ to $I$ and estimating the resulting terms in $L^2$.
Applying $\nabla_\xi^2$ to $I$ we obtain four terms 
\begin{align}
\nn& \partial_\xi^2 I(s,\xi) = J_1^\gamma(s,\xi) + 2 J_2^\gamma(s,\xi) + J_3^\gamma(s,\xi) + J_4^\gamma(s,\xi) \, ,
\\
\label{J_1g}
J_1^\gamma(s,\xi) & := \iint e^{is \phi(\xi,\eta,\s)} {|\eta|}^{-3+\gamma} 
  \partial_\xi^2 \what{f}(s,\xi-\eta) \widehat{f}(s,\eta+\s) \overline{\widehat{f}}(s,\s)  d\eta d\s  \, ,
\\
\label{J_2g}
J_2^\gamma(s,\xi) & :=  is \iint e^{is \phi(\xi,\eta,\s)}  m(\xi,\eta)  {|\eta|}^{-3+\gamma} 
  \partial_\xi \what{f}(s,\xi-\eta) \widehat{f}(s,\eta+\s) \overline{\widehat{f}}(s,\s) \, d\eta d\s  \, ,
\\
\label{J_3g}
J_3^\gamma(s,\xi) & := is \iint e^{is \phi(\xi,\eta,\s)} \partial_\xi m(\xi,\eta)  {|\eta|}^{-3+\gamma} 
    \what{f}(s,\xi-\eta) \widehat{f}(s,\eta+\s) \overline{\widehat{f}}(s,\s) \, d\eta d\s  \, ,
\\
\label{J_4g}
J_4^\gamma(s,\xi) & := - s^2 \iint e^{is \phi(\xi,\eta,\s)} {[m(\xi,\eta)]}^2 {|\eta|}^{-3+\gamma} 
  \what{f}(s,\xi-\eta) \widehat{f}(s,\eta+\s) \overline{\widehat{f}}(s,\s)  d\eta d\s  \, ,
\end{align}
where the symbol $m$ is defined as in \eqref{m}:
\begin{align*}
m(\xi,\eta) := \partial_\xi \Big( -\Lambda(\xi) + \Lambda(\xi-\eta) \Big)
\, . 
\end{align*}
The terms $J_i^\gamma$ in \eqref{J_1g}-\eqref{J_4g}, for $i=1,\dots,4$, look exactly like the terms $J_i$, in \eqref{J_1}-\eqref{J_4},
with the exception that the power on $|\eta|$ is now $-3+\gamma > -2$.

To obtain \eqref{estI}, it would be sufficient to prove
\begin{align}
\label{concWg}
& {\big\| {\langle \xi \rangle}^3 J_i^\gamma (s) \big\|}_{L^2} \les \e_1^3 {(1+s)}^{-1-(\gamma-1)/4} \, .
\end{align}
To see this, one should proceed as in sections \ref{secJ_1}-\ref{secJ_4} and use the following two facts:

\noindent
1) Under the a priori assumptions \eqref{apriorig} one can set $p_0 = 0$ in all the estimates in \ref{secJ_1}-\ref{secJ_4}.

\noindent
2) Let $2^{k_2}$ denote the size of $|\eta|$ as in the estimates of sections \ref{secJ_1}-\ref{secJ_4}.
Since \eqref{J_1g}-\eqref{J_4g} have a factor ${|\eta|}^{-3+\gamma}$ instead of ${|\eta|}^{-2}$ as in \eqref{J_1}-\eqref{J_4}, 
one can obtain estimates for \eqref{J_1g}-\eqref{J_4g} 
which are a factor $2^{(\gamma-1)k_2}$ better than those for \eqref{J_1}-\eqref{J_4}.

\noindent 
Thanks to these observations, one can verify that the bounds \eqref{concW2} for the $L^2$ norms of $J_1,\dots,J_4$,
can be improved to the bounds \eqref{concWg} for $J_1^\gamma,\dots,J_4^\gamma$. This gives \eqref{estI}. 
The scattering statement \eqref{theoHgscatt} follows from the bounds \eqref{concWg} and the integrable time decay of $\N_\gamma$.
This concludes the proof the Theorem.
\end{proof}

\vskip15pt
\section{Auxiliary Estimates}

\vskip10pt
\subsection{Proof of Proposition \ref{prodecay}: Refined Linear Estimates}\label{seclinest}
In this section we give the proof of Proposition  \ref{prodecay} by showing
\begin{align}
\label{disperse}
{\left\| e^{i t \sqrt{1-\Delta}} f \right\|}_{L^\infty}
  \les \frac{1}{(1+|t|)^{3/2}} {\big\| {(1+|\xi|)}^6 \what{f}(\xi) \big\|}_{L^\infty_\xi}
  + \frac{1}{(1+|t|)^{31/20}} \Big[ {\big\| {\langle x \rangle}^2 f \big\|}_{L^2} + {\|f\|}_{H^{50}} \Big] \, .
\end{align}
for any $t \in \R$.
Estimate \eqref{disperse} is a simple but crucial ingredient in deriving the modified scattering behavior for solutions of \eqref{eq}.
It identifies the leading order norm that needs to be controlled in order to obtain the necessary sharp pointwise decay of $t^{-3/2}$,
and dictates what expression needs to be analyzed in order to capture the asymptotic behavior of solution of \eqref{eq}.
Similar estimates, as well as some variants, 
have been used when dealing with other $L^\infty$ critical equations (and not only),
see for example \cite{HN,DelortKG1d,HNmKdV,HNBO,KP,FNLS}.

Our proof is in the same spirit of the proof of Lemma 2.3 of \cite{FNLS}, where the author and Ionescu treated the
linear propagator $\exp(it{|\partial_x|}^{1/2})$. The analogous estimate for this propagator was then used 
to obtain global solutions to the gravity water waves problem in the case of one dimensional interfaces \cite{2dWW}.

\begin{proof}[Proof of \eqref{disperse}]
Set $\Lambda (\nabla):= \sqrt{1-\Delta} = \langle \nabla \rangle$.
Using the notation \eqref{phi^m_k}, we write
\begin{align}
\label{prop}
\begin{split}
e^{it \Lambda(\nabla)} f (x,t) = \sum_{k \in \Z} \int_{\R^3} e^{it \phi(\xi)} \what{f}(\xi) \varphi_k(\xi) \, d\xi 
\quad , \quad \phi(\xi;x,t) := \Lambda(\xi) + \xi \cdot \frac{x}{t} \, .
\end{split}
\end{align}
For \eqref{disperse} it then suffices to prove that
\begin{equation}
\label{disp1}
 \sum_{k \in \Z}
\Big| \int_{\R^3} e^{it \phi(\xi)} \what{f}(\xi) \varphi_k(\xi) \, d\xi \Big| \les 1 \, ,
\end{equation}
for any $t \in \R, x \in \R^3$, and any function $f$ satisfying
\begin{equation}
\label{disp2}
(1+|t|)^{-3/2} {\| {(1+|\xi|)}^{6} \what{f} \|}_{L^\infty_\xi} 
 + (1+|t|)^{-31/20} \big[ {\big\| {\langle x \rangle}^2 f \big\|}_{L^2} + {\|f\|}_{H^{50}} \big] \leq 1 \, .
\end{equation}

\subsubsection*{High and low frequencies}
Using only the bound ${\| \what{f} \|}_{L^\infty} \leq (1+|t|)^{3/2}$, 
we estimate first the contribution of small frequencies, 
\begin{equation*}
\sum_{2^k \les (1+|t|)^{-1/2}}
  \Big| \int_{\R^3} e^{it \phi(\xi)} \what{f}(\xi) \varphi_k(\xi) \, d\xi \Big|
  \les \sum_{2^k \les (1+|t|)^{-1/2}} 2^{3k} {\|\what{P_k f} \|}_{L^\infty} \les 1 \, .
\end{equation*}
Using instead the bound ${\| f \|}_{H^{50}} \leq (1+|t|)^{31/20}$, we can control the contribution of large frequencies:
\begin{equation*}
\sum_{2^k \gtrsim {(1+|t|)}^{1/30} } 
  \Big| \int_{\R^3} e^{it \phi(\xi)} \what{f}(\xi) \varphi_k(\xi) \,d\xi \Big|
  \les \sum_{2^k \gtrsim {(1+|t|)}^{1/30} } 2^{3k/2} {\| \what{P_kf} \|}_{L^2}
  \les \sum_{2^k \gtrsim {(1+|t|)}^{1/30} } 2^{-48k} {\| f \|}_{H^{50}} \les 1 \, .
\end{equation*}

\subsubsection*{Non-stationary frequencies}
From above we see that for \eqref{disp1} it suffices to prove
\begin{equation}
\label{disp4}
\sum_{ {(1+|t|)}^{-1/2} \les 2^k \les {(1+|t|)}^{1/30} } 
  \Big|\int_{\R^3} e^{it \phi(\xi)} \what{f}(\xi)\varphi_k(\xi)\,d\xi \Big| \les 1 \, .
\end{equation}
In proving \eqref{disp4} we may assume that $t \geq 1$. 
Notice that for $|x| < t$
\begin{align}
\nabla_\xi \phi (\xi) = 0 \quad \Longleftrightarrow \quad \xi = \xi_0 := \frac{x}{ \sqrt{t^2-|x|^2}} \, ,
\end{align}
while $|\nabla_\xi \phi| \gtrsim \Lambda(\xi)^{-2}$, for $|x| \geq t$.

We estimate first the non-stationary contributions when $\xi$ is away from $\xi_0$,
and more precisely when $2^{k} \geq 2^4|\xi_0|$ or $2^{k} \leq 2^{-4}|\xi_0|$.
In these cases we have $|\partial_r \phi | \gtrsim |\xi-\xi_0| {(1+2^{3k})}^{-1}$,
where $\partial_r = \xi/|\xi| \cdot \nabla_\xi \phi$ denotes the radial derivative.
We can integrate by parts twice in \eqref{prop} and write:
\begin{align}
\label{propIBP}
\begin{split}
& \int_{\R^3} e^{it \phi(\xi)} \what{f}(\xi) \varphi_k(\xi) \, d\xi  = I_k^{(1)} + I_k^{(2)} + I_k^{(3)} \, ,
\\
I_k^{(1)} & := -\frac{1}{t^2} \int_{\R^3} e^{it \phi(\xi)} {(\partial_r\phi})^{-2} \partial_r^2 \big( \what{f}(\xi) \varphi_k(\xi) \big) \, d\xi \, ,
\\
I_k^{(2)} & := -3\frac{1}{t^2} \int_{\R^3} e^{it \phi(\xi)} 
  {(\partial_r\phi})^{-1} \partial_r {(\partial_r\phi})^{-1} \partial_r \big( \what{f}(\xi) \varphi_k(\xi) \big) \, d\xi \, ,
\\
I_k^{(3)} & := \frac{1}{t^2} \int_{\R^3} e^{it \phi(\xi)} 
  \partial_r \big( {(\partial_r\phi})^{-1} \partial_r {(\partial_r\phi})^{-1} \big) \what{f}(\xi) \varphi_k(\xi) \, d\xi \, .
\end{split}
\end{align}

For $|\xi|\in [2^{k-2}, 2^{k+2}]$ with $2^{k} \geq 2^4|\xi_0|$ or $2^{k} \leq 2^{-4}|\xi_0|$,
one has $|\partial_r \phi | \gtrsim 2^k {(1+2^{3k})}^{-1}$. Therefore, using \eqref{disp2}  we can estimate
\begin{align*}
\big| I_k^{(1)} \big| & \les t^{-2} 2^{-2k} {(1+2^{3k})}^{2} {\| \partial_r^2 \big( \what{f} \varphi_k \big) \|}_{L^1}
  \les t^{-2} 2^{-2k} (1+2^{6k}) \big( 2^{3k/2} {\| x^2 f \|}_{L^2} + 2^{-2k} {\| \what{f_k} \|}_{L^1} \big)
\\
& \les t^{-2} \big( {(1+2^{3k})}^{2}  2^{-k/2} t^{31/20} + 2^{-k} t^{3/2} \big) \, ,
\end{align*}
and deduce that $\sum_k | I_k^{(1)} | \les 1$,
from the fact that we are only summing over those $k$ such that $t^{-1/2} \les 2^k \les t^{1/30}$,

To estimate $I_k^{(2)}$ in \eqref{propIBP},
we first notice that $|{(\partial_r\phi})^{-1} \partial_r {(\partial_r\phi})^{-1}| \les 2^{-3k} {(1+2^{3k})}^2$.
Moreover one has
\begin{align*}
{\| \partial_r \big( \what{f} \varphi_k \big) \|}_{L^1} \les 
  2^{2k} {\| \what{f_k} \|}_{L^\infty} + 2^{5k/2} {\| \partial_r \what{f} \|}_{L^6} 
  \les 2^{2k} {\| \what{f_k} \|}_{L^\infty} + 2^{5k/2} {\| x^2 f \|}_{L^2}  \, .
\end{align*}
Therefore, we see that
\begin{align*}
\big| I_k^{(2)} \big| & \les t^{-2} 2^{-3k} {(1+2^{3k})}^{2} {\| \partial_r \big( \what{f} \varphi_k \big) \|}_{L^1}
  \les t^{-2}  \big( 2^{-k} {\| \what{f} \|}_{L^\infty} + (1+2^{6k})2^{-k/2} {\| x^2 f \|}_{L^2} \big) \, .
\end{align*}
Using again \eqref{disp2} and the restrictions $t^{-1/2} \les 2^k \les t^{1/30}$, we get $\sum_k | I_k^{(2)} | \les 1$.

We can deal similarly with $I_3$.
Since $| \partial_r \big( {(\partial_r\phi})^{-1} \partial_r {(\partial_r\phi})^{-1} \big) | \les  2^{-4k} {(1+2^{3k})}^2$, we obtain
\begin{align*}
\sum_k \big| I_k^{(3)} \big| & \les t^{-2} \sum_{2^k \gtrsim t^{-1/2}} 2^{-4k} {(1+2^{3k})}^{2} {\| \what{f} \varphi_k \|}_{L^1}
  \les t^{-2} \sum_{2^k \gtrsim t^{-1/2}} 2^{-k} (1+2^{6k})  {\| \what{f}_k \|}_{L^\infty}  \les 1 \, .
\end{align*}

\subsubsection*{Stationary contributions}
To eventually conclude the proof of \eqref{disp4} it suffices to show
\begin{equation}
\label{disp5}
\Big| \int_{ \R^3} e^{it\phi(\xi)} \what{f}(\xi) \varphi_k(\xi)\,d\xi\Big| \lesssim 1,
\end{equation}
provided that $|t|\geq 1$, $(1+|t|)^{-1/2} \les 2^k \les (1+|t|)^{1/30}$, and $2^k \in [2^{-4}|\xi_0|, 2^4|\xi_0|]$.
Let $l_0$ denote the smallest integer with the property that $2^{l_0} \geq |t|^{-1/2}$ and estimate the 
left-hand side of \eqref{disp5} by
\begin{equation}
\label{disp6}
\Big| \int_{\R^3} e^{it \phi(\xi)} \what{f}(\xi) \varphi_k(\xi) \,d\xi \Big|
  \leq \sum_{l=l_0}^{k+100} |J_l| \, ,
\end{equation}
where, with the notation \eqref{phi_k}, for any $l\geq l_0$ we have defined
\begin{equation*}
J_{l} := \int_{\R^3} e^{it\phi(\xi)} \what{f_k}(\xi) \varphi_l^{(l_0)}(\xi-\xi_0) \, d\xi \, .
\end{equation*}

From \eqref{disp2} it immediately follows
\begin{equation*}
 |J_{l_0}| \les 2^{3l_0} {\|\what{f_k} \|}_{L^\infty} \les {t}^{-3/2} {\|\what{f} \|}_{L^\infty} \les 1 \, .
\end{equation*}

For $l > l_0$ we integrate by parts in the expression for $J_l$ above, relying on the fact that
$|\xi-\xi_0| \gtrsim 2^l \gtrsim t^{-1/2}$ on the support of the integral.
Two integration by parts like the ones performed in the previous paragraph, give
\begin{align}
\label{J_lIBP}
\begin{split}
& J_l = J_l^{(1)} + J_l^{(2)} + J_l^{(3)} \, ,
\\
J_l^{(1)} & := -\frac{1}{t^2} \int_{\R^3} e^{it \phi(\xi)} {(\partial_r\phi})^{-2} \partial_r^2 
  \big( \what{f_k}(\xi)\varphi_l^{(l_0)}(\xi-\xi_0) \big) \, d\xi \, ,
\\
J_l^{(2)} & := -3\frac{1}{t^2} \int_{\R^3} e^{it \phi(\xi)} 
  {(\partial_r\phi})^{-1} \partial_r {(\partial_r\phi})^{-1} \partial_r \big( \what{f_k}(\xi) \varphi_l^{(l_0)}(\xi-\xi_0) \big) \, d\xi \, ,
\\
J_l^{(3)} & := \frac{1}{t^2} \int_{\R^3} e^{it \phi(\xi)} 
  \partial_r \big( {(\partial_r\phi})^{-1} \partial_r {(\partial_r\phi})^{-1} \big) \what{f_k}(\xi) \varphi_l^{(l_0)}(\xi-\xi_0) \, d\xi \, .
\end{split}
\end{align}

Most of the above contributions  can be estimated in exactly the same way as we have estimated the terms in \eqref{propIBP},
using the fact that $|\partial_r^2 \phi(\xi)| \approx {(1+2^k)}^{-3}$ and $|\xi-\xi_0| \approx 2^l \geq t^{-1/2}$,
which imply $|\partial_r \phi(\xi)| \gtrsim {(1+2^k)}^{-3} 2^l$ in the support of $J_l$.
The term $J_l^{(3)}$, for example, verifies the exact same bound as $I_k^{(3)}$:
\begin{align*}
\big| J_l^{(3)} \big| & \les t^{-2}  2^{-4l} {(1+2^{3k})}^{2} {\| \what{f_k}(\cdot) \varphi_l(\cdot-\xi_0) \|}_{L^1}
  \les t^{-2} (1+2^{6k}){\| \what{f_k} \|}_{L^\infty} 2^{-l}  \les 1 \, ,
\end{align*}
Using again \eqref{disp2}, $2^l \geq t^{-1/2}$, and $2^k \les t^{1/30}$, we estimate
\begin{align*}
\big| J_l^{(1)} \big| & \les t^{-2} 2^{-2l} {(1+2^{3k})}^{2} {\| \partial_r^2 \big( \what{f}(\cdot) \varphi_l(\cdot - \xi_0) \big) \|}_{L^1}
\\
& \les t^{-2} 2^{-2l} (1+2^{6k}) \big( 2^{3l/2} {\| \partial_r^2 \what {f} \|}_{L^2} + 2^{-2l}  
  {\| \what{f_k} \mathbf{1}_{[0,2^{l+4}]}(|\xi-\xi_0|) \|}_{L^1} \big) 
\\
& \les t^{-2} 2^{-2l} (1+2^{6k}) \big( 2^{3l/2} {\| x^2 f \|}_{L^2} + 2^{l}  {\| \what{f_k} \|}_{L^\infty} \big)
\\
& \les t^{-2}  \big( {(1+2^{3k})}^{2} 2^{-l/2} t^{31/20} + 2^{-l} t^{3/2} \big) \les 1 \, .
\end{align*}
Similarly
\begin{align*}
\big| J_l^{(2)} \big| & \les t^{-2} 2^{-3l} {(1+2^{3k})}^{2} {\| \partial_r \big( \what{f}(\cdot) \varphi_l(\cdot-\xi_0) \big) \|}_{L^1}
  \\
& \les t^{-2} (1+2^{6k}) \big( 2^{-l} {\| \what{f_k} \|}_{L^\infty} + 2^{-l/2} {\| x^2 f \|}_{L^2} \big) \les 1 \, .
\end{align*}
The desired bound \eqref{disp5} follows from \eqref{disp6},\eqref{J_lIBP} and the last three estimates. 
This completes the proof of \eqref{disp1} and the Proposition.
\end{proof}

\vskip10pt
\subsection{Bounds on pseudo-product operators}
Below we state a Lemma about pseudo-product operators which is used several times 
in the course of weighted energy estimates (section \ref{secweighted}) and remainder estimates (section \ref{secrem}).

\begin{lem}\label{lemprod}
Assume that $m\in L^1(\R^3 \times \R^3)$ satisfies
\begin{equation}\label{touse1}
{\left\| \int_{R^3 \times \R^3} m(\eta,\s) e^{ix\eta} e^{iy\s}\, d\eta d\s \right\|}_{L^1_{x,y}} \leq A \, ,
\end{equation}
for some $A \in (0,\infty)$. 
Then, for any $(p,q,r)$ with $1/p+1/q+1/r = 1$, 
\begin{equation}
\label{touse2}
 \left| \int_{\R^3 \times \R^3} \what{f}(\eta) \what{g}(\s) \what{h}(\eta+\s) m(\eta,\s) \,d\eta d\s \right|
  \les  A {\|f\|}_{L^p} {\|g\|}_{L^q} {\|h\|}_{L^r} \, .
\end{equation}

Moreover, for all $p,q$ with $1/p + 1/q = 1/2$, one has
\begin{equation}
\label{touse3}
 {\left\| \int_{\R^3} m(\xi,\eta)  \what{f}(\xi-\eta) \what{g}(\eta) \,d\eta \right\|}_{L^2_\xi}
  \les  A {\|f\|}_{L^p} {\|g\|}_{L^q}  \, .
\end{equation}

\end{lem}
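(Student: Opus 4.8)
The plan is to prove \eqref{touse2} first, since \eqref{touse3} will follow from it by a duality argument. Write $m(\eta,\s) = \int_{\R^3\times\R^3} K(x,y)\, e^{ix\eta}e^{iy\s}\, dx\, dy$, where by hypothesis \eqref{touse1} the kernel $K$ has ${\|K\|}_{L^1_{x,y}} \leq A$. Substituting this representation into the left-hand side of \eqref{touse2} and using the Fourier inversion formula, I would interchange the order of integration (justified by absolute convergence, using $K \in L^1$ and the fact that one may reduce to Schwartz $f,g,h$ by density) to recognize the triple frequency integral as a physical-space pairing: namely
\begin{align*}
\int_{\R^3\times\R^3} \what{f}(\eta)\what{g}(\s)\what{h}(\eta+\s)\, m(\eta,\s)\, d\eta\, d\s
  = c \int_{\R^3\times\R^3} K(x,y)\, \Big( \int_{\R^3} f(z-x)\, g(z-y)\, h(z)\, dz \Big)\, dx\, dy \, ,
\end{align*}
for an appropriate constant $c$ coming from the normalization of the Fourier transform (here the inner $z$-integral arises because $\what{h}$ is evaluated at $\eta+\s$, which couples the two translations). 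The inner integral is bounded by ${\|f(\cdot - x)\|}_{L^p}{\|g(\cdot - y)\|}_{L^q}{\|h\|}_{L^r} = {\|f\|}_{L^p}{\|g\|}_{L^q}{\|h\|}_{L^r}$ by Hölder with $1/p+1/q+1/r=1$ and translation invariance of Lebesgue norms; pulling this bound out and using ${\|K\|}_{L^1}\leq A$ gives \eqref{touse2}.

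For \eqref{touse3}, I would argue by duality: the $L^2_\xi$ norm of $T(f,g)(\xi) := \int m(\xi,\eta)\what{f}(\xi-\eta)\what{g}(\eta)\, d\eta$ equals the supremum over $\what{h}$ with ${\|h\|}_{L^2}\leq 1$ of $|\int \overline{\what{h}(\xi)}\, T(f,g)(\xi)\, d\xi|$. After the change of variables $(\xi,\eta) \mapsto (\eta', \s')$ matching the frequency pattern in \eqref{touse2} — one wants the three arguments to be $\eta'$, $\s'$, $\eta'+\s'$ — this pairing is exactly of the form controlled by \eqref{touse2} with the new multiplier $\widetilde m(\eta',\s') := m(\eta'+\s', \eta')$ (or an analogous relabeling); one checks that the $L^1$ bound \eqref{touse1} is preserved under such an affine change of the frequency variables, so $\widetilde m$ still satisfies \eqref{touse1} with the same constant $A$. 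Applying \eqref{touse2} with the Hölder triple $(p,q,2)$, which satisfies $1/p+1/q+1/2=1$ precisely when $1/p+1/q=1/2$, yields ${\|T(f,g)\|}_{L^2} \les A {\|f\|}_{L^p}{\|g\|}_{L^q}$, which is \eqref{touse3}.

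The only genuinely delicate point is bookkeeping the change of variables and the Fourier normalization constants so that the frequency arguments line up as $(\eta, \s, \eta+\s)$ and so that the $L^1_{x,y}$ hypothesis on the kernel transfers correctly; this is routine but must be done carefully, since an incorrect substitution could spoil the $L^1$ bound on the transformed multiplier. Everything else — the interchange of integrals, Hölder, translation invariance — is standard. I would also remark that the estimate is essentially the statement that an $L^1$ bound on the kernel of a bilinear Fourier multiplier makes it behave like a pointwise product, so no oscillation or cancellation in $m$ is used.
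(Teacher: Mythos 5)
Your proof is correct and follows essentially the same route as the paper's: both express the trilinear frequency pairing as a physical-space integral against the kernel $K = \mathcal{F}^{-1}m$, apply H\"older with translation invariance and ${\|K\|}_{L^1} \leq A$, and then deduce \eqref{touse3} from \eqref{touse2} by duality. The paper simply states ``\eqref{touse3} follows from \eqref{touse2} by duality'' without spelling out the affine change of variables and the invariance of the $L^1$ kernel bound under it; your elaboration of that step is correct and arguably more informative than the paper's.
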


\begin{proof}
We rewrite
\begin{equation*}
\begin{split}
\Big| \int_{\R \times \R} \what{f}(\eta) \what{g}(\s) \what{h} (-\eta-\s) m(\eta,\s) \,d\eta d\s \Big|
  &= C \Big| \int_{\R^3}f(x)g(y)h(z)K(z-x,z-y) \,dx dy dz \Big|
\\
  & \les \int_{\R^3} | f(z-x) g(z-y) h(z)| \,|K(x,y)| \, dx dy dz \, ,
\end{split}
\end{equation*}
where
\begin{equation*}
 K(x,y) := \int_{\R \times \R} m(\eta,\s) e^{ix\eta} e^{iy\s} \, d\eta d\s \, .
\end{equation*}
The desired bound \eqref{touse2} follows easily from \eqref{touse1} which says $K \in L^1_{x,y}$.
\eqref{touse3} follows from \eqref{touse2} by duality. \end{proof}


\vskip20pt
\bibliographystyle{plain}

\end{document}